\def\mydate{8 August  2016}
\newcommand{\zH}{\mathcal H}
\newcommand{\zV}{\mathcal V}
\newcommand{\?}[1]{%
  \marginpar{%
    \begin{minipage}{2cm}%{\marginparwidth}%
      \begin{flushleft}%
%        #1%
      \end{flushleft}%
   \end{minipage}%
  }%
}%
\def\myclaim#1#2{
   \global\advance\claimno by 1\relax
   \bigskip\noindent\rlap{\rm(\the\claimno)}\ignorespaces
   \global\expandafter\edef\csname CLAIMLABEL#1\endcsname{(\the\claimno)}\relax
%   \marginpar{#1}  %%%
   \hangindent=33pt\hskip30pt{\sl#2}\bigskip}
\def\hardclaim#1#2{
   \bigskip\noindent\rlap{\rm(#1)}\ignorespaces
   \hangindent=33pt\hskip30pt{\sl#2}\bigskip}
\def\refclaim#1{\csname CLAIMLABEL#1\endcsname}
\def\myproof{\noindent{\bf Proof. }}
\def\qed{\hfill$\square$\bigskip\medskip}
\font\smallrm=cmr8
\def\rt#1{#1}
\def\yy#1{{\color{blue}#1}}
\def\zz#1{{\color{blue}#1}}
\def\xx#1{{\color{red}#1}}
\def\cc#1{{\color{red}#1}}
\def\cc#1{{#1}}
\def\zz#1{{#1}}
\def\xx#1{{#1}}
\def\yy#1{{\color{black}#1}}
\newtheorem{Proposition}{Proposition}[section]
\newtheorem{Lemma}[Proposition]{Lemma}
\newtheorem{Theorem}[Proposition]{Theorem}
\newenvironment{proof}%
{\noindent{\bf Proof.}\ }%
{\unskip\nobreak\hfill\penalty50\hskip2em\hbox{}\nobreak\hfill%
       $\square$\parfillskip=0pt\finalhyphendemerits=0\par\bigskip}%
\newtheorem{Definition}{Definition}
\begin{document}
{
\baselineskip=16pt
\phantom{a}\vskip .25in
\centerline{{\bf  A NEW PROOF OF THE FLAT WALL THEOREM}}
\vskip.4in
%authors
\centerline{{\bf Ken-ichi Kawarabayashi}%
  \footnote{Supported by JST, ERATO, Kawarabayashi Large Graph Project.}
}
\centerline{National Institute of Informatics}
\centerline{2-1-2 Hitotsubashi, Chiyoda-ku, Tokyo 101-8430, Japan}
\medskip
\centerline{{\bf Robin Thomas}%
\footnote{\smallrm Partially supported by NSF under
Grants No.~DMS-0701077 and~DMS-1202640.}}
\centerline{School of Mathematics}
\centerline{Georgia Institute of Technology}
\centerline{Atlanta, Georgia  30332-0160, USA}
\medskip
\centerline{and}
\medskip
\centerline{{\bf Paul Wollan}%
\footnote{\smallrm 
Partially supported by the European Research Council under the European Union's Seventh Framework Programme (FP7/2007-2013)/ERC Grant Agreement no. 279558.}}
\centerline{Department of Computer Science}
\centerline{University of Rome ``La Sapienza"}
\centerline{00198 Rome, Italy}
%end of authors
}

\vfill \noindent 12 June 2012, revised \mydate. 
%This is an incomplete revision; Section 7 needs to be updated.
\newpage

\vskip 0.7in \centerline{\bf ABSTRACT}
\bigskip
%\parshape=1.5truein 5.5truein
We give an elementary and self-contained proof,
and a numerical improvement, of a weaker form
of the excluded clique minor theorem of Robertson and Seymour,
the following.
Let $t,r\ge1$ be  integers, and let $R=49152t^{24}(\rt{40}t^2+r)$.
An $r$-wall is obtained from a $2r\times r$-grid by deleting
every odd vertical edge in every odd row and every even vertical
edge in every even row, then deleting the two resulting vertices
of degree one, and finally subdividing edges arbitrarily.
The vertices of degree two that existed before the subdivision are
called the pegs of the $r$-wall.
Let $G$ be a graph with no $K_t$ minor, and let $W$ be an $R$-wall in $G$. 
We prove that there exist
a set $A\subseteq V(G)$ of size at most $12288t^{24}$ and an $r$-subwall
$W'$ of $W$ such that $V(W')\cap A=\emptyset$ and $W'$ is a flat wall
in $G-A$ in the following sense. There exists a separation $(X,Y)$ of $G-A$
such that $X\cap Y$ is a subset of the vertex set of the cycle $C'$
that bounds the outer face of $W'$, $V(W')\subseteq Y$, 
every peg of $W'$ belongs to $X$ and the graph
$G[Y]$ can almost be drawn in the unit disk with the vertices $X\cap Y$ drawn
on the boundary of the disk in the order determined by $C'$.
Here almost means that the assertion holds after repeatedly removing
parts of the graph separated from $X\cap Y$ by a cutset $Z$ of size at
most three, and adding all edges with both ends in $Z$.
Our proof gives rise to an algorithm that runs in polynomial time
even when $r$ and $t$ are part of the input instance.
The proof is self-contained in the sense that it uses only results
whose proofs can be found in textbooks.

\vfil\eject

%\maketitle
%%%%%%%%%%%%%%%%%%%%%%%%%%%%%%%%%%%%%%%%
\section{Introduction}

All graphs in this paper are finite, and may have loops and parallel edges.
A graph is a \emph{minor}\?{minor} of another if the first can be obtained
from a subgraph of the second by contracting edges.
An \emph{$H$ minor}\?{$H$ minor} is a minor isomorphic to $H$.
There is an ever-growing collection of so-called excluded minor theorems
in graph theory.
These are theorems which assert that every graph with no minor isomorphic
to a given graph or a set of graphs has a certain structure.
The best known such theorem is perhaps Wagner's reformulation
of Kuratowski's theorem~\cite{Wag37Kur}, 
which says that a graph has no $K_5$ or
$K_{3,3}$ minor if and only if it is planar.
One can also characterize graphs that exclude only one of those
minors.
To state such a characterization for excluded $K_5$ we need the following
definition.
Let $H_1$ and $H_2$ be graphs, and let $J_1$ and $J_2$ be complete subgraphs
of $H_1$ and $H_2$, respectively, with the same number of vertices.
Let $G$ be obtained from the disjoint union of $H_1-E(J_1)$ and 
$H_2-E(J_2)$ by choosing a bijection between $V(J_1)$ and $V(J_2)$
and identifying the corresponding pairs of vertices.
%identifying $V(J_1)$ and $V(J_2)$.
We say that $G$ is a \emph{clique-sum}\?{clique-sum} of $H_1$ and $H_2$.
Since we allow parallel edges, the set that results from the identification
of $V(J_1)$ and $V(J_2)$ may include edges of the clique-sum.
For instance, the graph obtained from $K_4$ by deleting an edge can be
expressed as a clique-sum of two smaller graphs, where one is a triangle and 
the other is a triangle with a parallel edge added.
By $V_8$ we mean the graph obtained from a cycle of length eight by
adding an edge joining every pair of vertices at distance four in the cycle.
The characterization of graphs with no $K_5$ minor, due to 
Wagner~\cite{Wag37}, reads as follows.

\begin{Theorem}
\label{thm:wagner}
A graph has no $K_5$ minor if and only if it can be obtained by
repeated clique-sums, starting from planar graphs and $V_8$.
\end{Theorem}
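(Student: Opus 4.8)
I would prove the two implications separately; the ``if'' direction and the connectivity reductions in the ``only if'' direction are routine, and the real content lies in a single structural case.

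For the ``if'' direction, note that planar graphs have no $K_5$ minor, since the class of planar graphs is closed under minors and $K_5$ is non-planar by Euler's formula, and that $V_8$ has no $K_5$ minor by a short count: any $K_5$ minor forces a subgraph $F$ with $|E(F)|\ge|V(F)|+5$, whereas every subgraph $F$ of the $3$-regular graph $V_8$ satisfies $|E(F)|\le 3|V(F)|/2$ and $|V(F)|\le 8$, hence $|E(F)|<|V(F)|+5$. It then suffices to prove that the class of $K_5$-minor-free graphs is closed under clique-sums on at most three vertices. Let $G$ be a clique-sum of $H_1$ and $H_2$ with glue clique $S=V(J_1)=V(J_2)$, $|S|\le 3$, and let $G^{+}$ be built the same way but \emph{keeping} the edges of $J_1$; since $G$ is a subgraph of $G^{+}$ it is enough to show that $G^{+}$ has no $K_5$ minor whenever $H_1,H_2$ do not. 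In a putative $K_5$ model $(B_1,\dots,B_5)$ of $G^{+}$, at most $|S|\le 3<5$ of the $B_i$ meet $S$, so some $B_i$ misses $S$, hence lies inside $V(H_1)\setminus S$ or inside $V(H_2)\setminus S$; as $G^{+}$ has no edge between these two sets, only one case occurs, so after relabelling every $B_i$ meets $V(H_1)$. Since $S\subseteq V(H_1)$ is a clique of $G^{+}$, each $B_i\cap V(H_1)$ is connected, and for any pair $i,j$ an edge witnessing $B_i\sim B_j$ may be taken inside $H_1$ (if its given endpoint lies outside $V(H_1)$, then $B_i$ and $B_j$ both meet $S$ and an edge of $S$ serves instead). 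Hence $\bigl(B_i\cap V(H_1)\bigr)_{i=1}^{5}$ is a $K_5$ model in $H_1$, a contradiction.

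For the ``only if'' direction I would induct on $|V(G)|$; the cases $|V(G)|\le 4$ are trivial, as then $G$ is a subgraph of $K_4$ and is planar. If $G$ is not $3$-connected, the standard block and $2$-sum decomposition writes $G$ as a clique-sum of strictly smaller graphs, each obtained from a side of a separation of order at most $2$ by adding the at most one missing edge on the cut; each remains $K_5$-minor-free, since a $K_5$ minor in such a piece would, after rerouting the added edge through the other side, give a $K_5$ minor of $G$. So assume $G$ is $3$-connected. Call a proper $3$-separation $(A,B)$ of $G$ \emph{reducing} if, after making $A\cap B$ a triangle, both $G[A]$ and $G[B]$ are still $K_5$-minor-free; if one exists then $G$ is the clique-sum of these two strictly smaller graphs and we finish by induction. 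Hence we may assume that $G$ is $3$-connected with no reducing $3$-separation; in particular, if $G$ is $4$-connected it has no proper $3$-separation at all.

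It remains to prove that such a $G$ is planar or is $V_8$. If $G$ is planar we are done, so suppose not. By Kuratowski's theorem $G$ contains a subdivision of $K_5$ or of $K_{3,3}$; the former is itself a $K_5$ minor, which is excluded, so $G$ contains a subdivision $H$ of $K_{3,3}$, chosen with an extremal property, say with $|V(H)|$ maximum. I would then analyse the bridges of $H$ in $G$ --- the components of $G-V(H)$ with their attachment vertices, together with the chords of $H$ --- and show, using $3$-connectivity and the absence of a reducing $3$-separation, that each bridge meets $H$ in one of a short list of patterns. The mechanism is that any sufficiently rich attachment yields a $K_5$ minor: the six branch vertices of the $K_{3,3}$, together with a fifth branch set assembled from a bridge and routed through the subdivision, supply the five branch sets and all ten adjacencies of a $K_5$. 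Discarding every such configuration leaves $G$ with so little structure beyond $H$ that, being non-planar, it can only be $V_8$, and comparison with the hypotheses identifies $G$ with $V_8$ exactly. In particular, since $V_8$ is not $4$-connected, a $4$-connected graph with no $K_5$ minor must be planar --- in that sub-case the bridge analysis produces a $K_5$ minor outright.

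The easy parts above are genuinely easy, so the main obstacle is the last paragraph: the case analysis of how $G$ can hang off a $K_{3,3}$-subdivision. Making the right extremal choice of $H$ and organising the bridge cases so that the list of attachment patterns stays finite --- and so that the only non-planar graph left standing is $V_8$ --- is where essentially all of the work, and all of the risk of error, resides.
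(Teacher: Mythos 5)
The paper does not actually prove Theorem~\ref{thm:wagner}; it merely cites Wagner's 1937 paper and uses the result as known background. So there is no ``paper proof'' to compare your attempt against, and I can only assess the attempt on its own terms.

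Your ``if'' direction is essentially correct. The edge count for $V_8$ is right (a $K_5$ minor forces a subgraph $F$ with $|E(F)|\ge|V(F)|+5$, while every subgraph of a cubic graph on at most $8$ vertices satisfies $|E(F)|\le 3|V(F)|/2\le 12<|V(F)|+5$), and the clique-sum projection argument is sound: project every branch set to the side $V(H_1)$ containing all branch sets missing $S$, use the clique on $S$ both to keep $B_i\cap V(H_1)$ connected and to supply adjacencies whose witnessing edges crossed to $V(H_2)\setminus S$. (As a small aside, you restrict to $|S|\le 3$, but the same projection works for any $|S|$ as soon as one side contains $S$ and every branch set meets it, so the restriction is not needed.) The $\kappa\le 2$ reduction in the ``only if'' direction is standard and correct, and the reduction via ``reducing'' $3$-separations is a legitimate inductive step.

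However, there is a genuine gap, and it is exactly the one you flag. You reduce to the case of a $3$-connected, non-planar, $K_5$-minor-free graph $G$ with no reducing $3$-separation, and assert that $G=V_8$; but this is the entire content of Wagner's theorem, and you do not actually prove it. The reduction to ``no reducing $3$-separation'' rather than the classical ``edge-maximal'' is a real deviation from the textbook route and it is not obviously harmless: for example, $V_8$ itself admits proper $3$-separations, and one must check that none of them is reducing (in the separation of $V_8$ on the cut $\{1,4,6\}$, adding the triangle to the large side produces a $K_5$ minor, so that separation is indeed non-reducing, but this needs to be verified case by case, and a priori other $3$-connected $K_5$-minor-free non-planar graphs could survive the reduction). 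The bridge analysis of a chosen $K_{3,3}$-subdivision — organising the attachment patterns, ruling out everything that yields a $K_5$ minor, and identifying the survivor with $V_8$ — is the theorem, and the sketch offers only the choice of extremal $H$ by $|V(H)|$ without verifying that this choice makes the case analysis terminate. Until that paragraph is carried out in full, the proof is incomplete, and your own closing sentence concedes as much.
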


There are many other similar theorems; a survey can be found in~\cite{DieGrDec}.
Theorem~\ref{thm:wagner} is very elegant, but attempts at extending it
run into difficulties. For instance, no characterization is known for
graphs with no $K_6$ minor, and there is evidence suggesting that such
a characterization would be fairly complicated.
Even if a characterization of graphs with no $K_6$ is found,
there is no hope in finding one for excluding $K_t$ for larger values of $t$.

Thus when excluding an $H$ minor for a general graph $H$ we need to settle 
for a less ambitious goal---a theorem that gives a necessary condition 
for excluding an $H$ minor, but not necessarily a sufficient one.
However, for such a theorem to be meaningful, the structure it describes
must be sufficient to exclude some other, possibly larger graph $H'$.
For planar graphs $H$ this has been done by 
Robertson and Seymour~\cite{RS5}.
To state their theorem we need to recall that the 
\emph{tree-width}\?{tree-width} of a graph $G$ is the least integer
$k$ such that $G$ can be obtained by repeated clique-sums,
starting from graphs on at most $k+1$ vertices.

\begin{Theorem}
\label{thm:grid}
For every planar graph $H$ there exists an integer $k$ such that
every graph with no $H$ minor has tree-width at most $k$.
If $H$ is not planar, then no such integer exists.
\end{Theorem}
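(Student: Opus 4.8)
The plan is to treat the two halves of the statement separately: the claim that no $k$ works when $H$ is non-planar is easy, while the bound for planar $H$ is the substance.

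\emph{The case $H$ non-planar.} For $n\in\IN$ let $\Gamma_n$ denote the $n\times n$ grid. Since any minor of a planar graph is planar and $\Gamma_n$ is planar, $\Gamma_n$ has no $H$ minor whenever $H$ is non-planar; on the other hand $\Gamma_n$ has tree-width $n$ (a standard computation, which I would carry out by exhibiting the bramble of all ``crosses'', i.e.\ unions of one row with one column, and invoking the duality between tree-width and brambles). Hence $\{\Gamma_n:n\in\IN\}$ is an infinite family of $H$-minor-free graphs of unbounded tree-width, so no integer $k$ as in the statement can exist.

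\emph{The case $H$ planar: reduction to walls.} I would first reduce to the case where $H$ is a wall. Given a planar graph $H$ on $n$ vertices, a plane drawing of $H$ can be deformed so as to be routed along the edges of a sufficiently fine grid, which shows that $H$ is a minor of the $m\times m$ grid for some $m$ depending only on $n$; and the $m\times m$ grid is in turn a minor of the $r$-wall for $r$ of order $m$ (contract the short ``vertical'' edges of the wall). Both facts are routine exercises on planar graphs. It therefore suffices to prove that for every $r$ there is an integer $k=k(r)$ such that every graph with no $r$-wall has tree-width at most $k$; equivalently, by contraposition, that there is a function $g$ with the property that $\text{tw}(G)\ge g(r)$ implies that $G$ contains an $r$-wall.

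\emph{The case $H$ planar: from large tree-width to a large wall.} This is the core, and I would do it in two stages. \textbf{Stage 1 (extract a well-linked set).} Using the tree-width/bramble duality, show that if $\text{tw}(G)$ is at least a suitable constant multiple of $h$ then $G$ contains a set $Z\subseteq V(G)$ with $|Z|=h$ that is \emph{well-linked}: for all disjoint $A,B\subseteq Z$ with $|A|=|B|$ there are $|A|$ vertex-disjoint $A$--$B$ paths. This step uses only Menger's theorem and the (short, self-contained) bramble duality. \textbf{Stage 2 (build the wall inside $Z$).} Starting from a well-linked $Z$ whose size is a suitable polynomial in $r$, construct the wall iteratively: choose a large family $\zP$ of vertex-disjoint paths joining two halves of $Z$ — these become the horizontal paths — then route a path crossing many members of $\zP$ in their natural order, using well-linkedness to rule out a small separator cutting such a crossing path off from the rest; repeat to obtain many vertical crossing paths that interleave correctly with $\zP$, and finally prune to extract an honest $r$-wall. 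Each round is arranged so that only a controlled fraction of the structure is lost, so starting large enough suffices.

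\emph{Main obstacle.} The difficulty — and the reason the resulting bounds are large — lies entirely in Stage 2: a highly connected set need not resemble a grid at all (a clique is well-linked), so the wall must be teased out by repeatedly routing crossing paths and discarding the parts that fail to respect the linear order of the endpoints, all while bounding the cumulative loss. Organising these path-routing steps so that two genuinely interleaved families of paths survive is the technical heart of the argument; by comparison, the non-planar direction and the reduction to walls are routine.
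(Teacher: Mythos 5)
The paper does not prove Theorem~\ref{thm:grid}. It is quoted as a result of Robertson and Seymour~\cite{RS5} and invoked only as a black box (for instance, in the remark after Theorem~\ref{thm:main} that the $R$-wall hypothesis could be replaced by a large-tree-width hypothesis). There is therefore no in-paper argument to compare your proposal against; what follows assesses the proposal on its own terms.

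Your non-planar direction and the reduction from a general planar $H$ to a wall (equivalently, a grid) are correct and routine. The genuine gap is in Stage~2. Well-linkedness of $Z$ gives you, for balanced $A,B\subseteq Z$, a family of $|A|$ disjoint $A$--$B$ paths, and nothing more. It does not by itself produce a path that traverses a prescribed family $\zP$ of horizontal paths ``in their natural order,'' and ruling out a small separator that cuts a crossing path off does not force the interleaving you need. Indeed, as you note, a large clique is well-linked, and extracting a grid-like pattern of crossing paths from a clique is precisely where all the work lies. Every known proof of the grid theorem (the original in~\cite{RS5}, the Diestel--Jensen--Gorbunov--Thomassen simplification, the more recent polynomial-bound arguments of Leaf--Seymour and Chekuri--Chuzhoy) has to bring in substantial further machinery at exactly this step: brambles or tangles of large order, an iterated extraction of long disjoint paths together with a transversal family, and a careful loss analysis across rounds. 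Your sketch names the obstacle but does not overcome it; as written, Stage~2 is an aspiration rather than an argument, so the proposal is an accurate strategic outline but not a proof.
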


This is a very satisfying theorem, because it is best possible in
at least two respects. Not only is there no such integer when $H$ is not planar,
but no graph of tree-width $k$ has a minor isomorphic to the
$(k+1)\times(k+1)$-grid.

But how about excluding a non-planar graph?
Robertson and Seymour have an answer to that question as well,
but in order to motivate it we need to digress a bit.

%%%%%%%%%%%%%%%%%%%%%%%%%%%%%%%%%%%%%%%%%%%%%%%%%%%%%%%%%%%%%%%%%%
\subsection{The Two Disjoint Paths Problem}

Let $C$ be a cycle in a graph $G$. We say
that a \emph{$C$-cross}\?{$C$-cross} in  $G$ is a pair of
disjoint paths $P_1,P_2$ with ends $s_1,t_1$ and $s_2,t_2$, respectively,
such that $s_1,s_2,t_1,t_2$ occur on $C$ in the order listed, and the
paths are otherwise disjoint from $C$.

Let $G$ be a graph, and let $s_1,s_2,t_1,t_2\in V(G)$.
The TWO DISJOINT PATHS PROBLEM asks whether there exist two disjoint
paths $P_1,P_2$ in $G$ such that $P_i$ has ends $s_i$ and $t_i$.
There is a beautiful characterization of the feasible instances,
which we now describe.
%found in various forms by many different reseachers.
First of all, let us assume that $G$ has a cycle $C$ with vertex-set
$\{s_1,s_2,t_1,t_2\}$ in order.
This we can assume, because the edges of $C$ can be added without changing
the feasibility status of the problem.
It follows that the TWO DISJOINT PATHS PROBLEM is feasible if and only if
the graph $G$ has a $C$-cross.
Thus we will study the more general problem of when a graph has a $C$-cross.

Now if $G$ can be drawn in the plane with $C$ bounding a face, then
it has no $C$-cross.
(Proof. Add a new vertex in the face bounded by $C$ and join it by
an edge to every vertex of $C$. The new graph
is planar, and yet if the $C$-cross existed, they would give rise to a
$K_5$ minor in $G$.)
So this gives one class of obstructions, but there is another one.
A \emph{separation} in a graph $G$ is a pair $(A,B)$ of subsets of vertices such that
$A\cup B=V(G)$, and there is no edge of $G$ with one end in $A \setminus B$ and
the other in $B \setminus A$.
The order of the separation $(A,B)$ is $|A\cap B|$.
Now if there exists a separation $(A,B)$ of $G$ of order at most three with
$V(C)\subseteq A$, then the vertices in $B \setminus A$ are not very useful.
Let $H$ be the graph obtained from $G$ by deleting $B \setminus A$ and instead adding 
an edge joining every pair of vertices in $A\cap B$.
It follows that if $G$ has a $C$-cross, then so does $H$.
Furthermore, if we
choose $(A,B)$ so that some component of $G[B \setminus A]$ includes
a neighbor of every vertex in $A\cap B$, then the converse holds as well.
%In that case the feasibility of the problem remains unchanged if
%we delete $B \setminus A$ and instead add an edge joining every pair of
%vertices in $A\cap B$.
Let us turn this observation into a definition.

%but first let us recall that a collection of paths $\cal P$ are
%\emph{internally disjoint}\?{internally disjoint}
%if every vertex that belongs to two distinct members of $\cal P$ is
%an end of both.

\begin{Definition} 
%Let $\cal P$ be a set of paths in a graph.
%We say that the paths in $\cal P$ are
%\emph{internally disjoint}\?{internally disjoint}
%if every vertex that belongs to two distinct members of $\cal P$ is
%an end of both.
Let $G$ be a graph, and let $X\subseteq V(G)$.
%$C$ be a subgraph of $G$.
Let $(A,B)$ be a separation of $G$ of order at most three with
$X\subseteq A$ and such that there exist $|A\cap B|$ 
%internally disjoint 
paths from some vertex  $v\in B \setminus A$ to $X$ that are disjoint except for $v$.
Let $H$ be the graph obtained from $G[A]$ by adding an edge joining
every pair of \zz{distinct}  vertices in $A\cap B$.
We say that $H$ is an
\emph{elementary $X$-reduction}\?{elementary $X$-reduction} of $G$,
and we say that it is an \emph{elementary $X$-reduction determined by $(A,B)$}.
We say that a graph $J$ is an \emph{$X$-reduction}\?{$X$-reduction} of $G$
if it can be obtained from $G$ be a series of elementary $X$-reductions.
If $C$ is a subgraph of $G$, then by an (elementary) $C$-reduction
we mean an (elementary) $V(C)$-reduction.
\end{Definition}

Thus taking $C$-reductions does not change whether there exists a $C$-cross,
%the feasibility of the TWO DISJOINT PATHS PROBLEM, 
and as we are about to see, 
when no $C$-reduction is possible, the only obstruction to the 
existence of a $C$-cross is topological, namely that $G$
can be drawn in the plane with $C$ bounding a face.
%To state the theorem in a slightly more general form, let 
%$C$ be a cycle in a graph $G$. We say
%that a \emph{$C$-cross}\?{$C$-cross} in  $G$ is a pair of
%disjoint paths $P_1,P_2$ with ends $s_1,t_1$ and $s_2,t_2$, respectively,
%such that $s_1,s_2,t_1,t_2$ occur on $C$ in the order listed, and the
%paths are otherwise disjoint from $C$.
The first version of the promised theorem, obtained in various forms by
Jung~\cite{Jung},
Robertson and Seymour~\cite{RS9},
Seymour~\cite{SeyDisj}, Shiloach~\cite{Shi}, and Thomassen~\cite{Tho2link}
reads as follows.

\begin{Theorem}
\label{thm:crossreduct}
Let $G$ be a graph, and let $C$ be a cycle in $G$.
Then $G$ has no $C$-cross if and only if some $C$-reduction of $G$
can be drawn in the plane with $C$ bounding a face.
\end{Theorem}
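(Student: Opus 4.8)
The plan is to prove Theorem~\ref{thm:crossreduct} by an induction on $|V(G)|+|E(G)|$, with the easy direction first. If some $C$-reduction $G'$ of $G$ is planar with $C$ bounding a face, then a $C$-cross in $G$ would survive passage to $G'$ (an elementary $C$-reduction only deletes vertices separated from $V(C)$ by a cutset of order $\le 3$ and adds clique edges on that cutset, and one checks that any $C$-cross can be rerouted through the added clique edges), contradicting planarity exactly as in the $K_5$ argument sketched in the excerpt for the TWO DISJOINT PATHS PROBLEM. So the content is the forward direction: assuming $G$ has no $C$-cross, produce the planar $C$-reduction.

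For the hard direction I would argue as follows. First reduce to the case that $G$ is connected and that no elementary $C$-reduction applies; the latter means $G$ has no separation $(A,B)$ of order $\le 3$ with $V(C)\subseteq A$ and a vertex of $B\setminus A$ joined to $V(C)$ by $|A\cap B|$ paths disjoint except at that vertex, i.e.\ $G$ is in a sense ``$3$-connected relative to $C$''. We may also assume $V(G)=V(C)\cup(\text{stuff attached to }C)$ after deleting components not meeting $C$. Now I would try to find the planar embedding directly. One clean route: add a new vertex $v_0$ adjacent to all of $V(C)$, call the result $G^+$; the goal becomes showing $G^+$ is planar. If $G^+$ has a $K_5$ or $K_{3,3}$ minor, I want to extract from it a $C$-cross in $G$, using the non-reducibility of $G$ to guarantee enough connectivity that the branch sets of the minor can be linked to $C$ in four places in the cyclic order. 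Kuratowski/Wagner then gives planarity of $G^+$, and deleting $v_0$ leaves $G$ drawn with $C$ bounding a face.

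An alternative, and probably cleaner, inductive route: pick an edge $e=xy$ of $G$ not on $C$ (if none exists, the chords-free case, we are basically done after handling bridges of $C$). Consider $G/e$ and $G\setminus e$; a $C$-cross in neither would, via the induction hypothesis, give $C$-reductions that are planar, and I would need to glue these two planar pictures along their common structure to get a planar $C$-reduction of $G$ — this is where small separations and the $3$-connectivity-relative-to-$C$ hypothesis do the work, ruling out the configurations where the two embeddings are incompatible, each such obstruction yielding a $C$-cross. Either way the key technical device is a lemma of the form: \emph{if $G$ is connected, has no $C$-cross, and admits no elementary $C$-reduction, then every ``bridge'' of $C$ in $G$ (including chords) attaches to $V(C)$ within a single segment or a pair of antipodal points in a controlled way}, so that the bridges can be embedded on one side of $C$ consistently.

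The main obstacle, as usual in two-paths-type arguments, is the bookkeeping in the forward direction: turning a failure of planarity into an \emph{honest} $C$-cross, i.e.\ two genuinely vertex-disjoint paths hitting $C$ at four points in the correct cyclic order and otherwise avoiding $C$. The subtlety is that a Kuratowski subgraph of $G^+$ or a bad configuration in the gluing step only gives you \emph{paths}; promoting these to disjoint paths with the right endpoints on $C$ requires repeatedly invoking the non-reducibility hypothesis (which supplies the $|A\cap B|$ disjoint linkages across every small separation) and Menger's theorem to reroute. I expect essentially all the difficulty to be concentrated there; once the right ``no small separation relative to $C$'' normalization is set up, the planar embedding falls out of Kuratowski's theorem plus an induction on bridges.
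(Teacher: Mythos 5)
Your easy direction is fine in outline: if $(A,B)$ has order at most three, the two disjoint cross-paths together cannot use more than three vertices of $A\cap B$, so at most one of them can dip into $B\setminus A$, and that dip can be short-cut by a clique edge; iterating over an entire $C$-reduction sequence and contradicting planarity as in the $K_5$ argument is the right move (though you should check the boundary case where the short-cut edge's ends both lie on $C$).

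The problem is the forward direction, and you know it. Everything that makes this theorem a theorem is packed into the sentence ``if $G^+$ has a $K_5$ or $K_{3,3}$ minor, I want to extract from it a $C$-cross in $G$'' and into the gluing step of the contract/delete induction, and in both cases you only state the intent (``I want to,'' ``I expect,'' ``this is where the difficulty is concentrated'') without doing the extraction. That step is not a routine application of Menger and the normalization; it is exactly the two-disjoint-paths theorem, and the passage from a Kuratowski subgraph of $G^+$ to an honest cross on $C$ is where Seymour's, Thomassen's, Shiloach's, and Robertson--Seymour's proofs spend essentially all of their effort, with substantial case analysis either on bridges of $C$ or on the position of the Kuratowski branch vertices relative to $C$. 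Your proposed key lemma (``every bridge of $C$ attaches within a single segment or a pair of antipodal points'') is also not obviously correct as stated: under the irreducibility hypothesis a bridge can legitimately attach at four or more vertices spread around $C$ (that is precisely when you would want to apply the theorem recursively to the bridge together with a short cycle through its attachments), and it is the absence of \emph{conflicting} pairs of bridges and of internal crosses within a single bridge that has to be established — a stronger and more delicate statement than the one you wrote. As it stands, the proposal is a correct and sensibly organized plan of attack, but the hard direction is a restatement of the problem rather than a proof; to close the gap you must actually carry out the bridge/Kuratowski case analysis (or cite and verify one of the references), not merely observe that the irreducibility hypothesis ``should'' let you reroute.
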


Since Theorem~\ref{thm:crossreduct} is not as well-known as it should be,
and its proof is not entirely trivial, we give a proof in the Appendix.
\yy{An additional reason for including a proof of Theorem~\ref{thm:crossreduct}
is to validate our claim that we only use results that can be found in textbooks.}
%Appendix containing the proof in a fu a proof in the Appendix.
For applications it is desirable to have a representation of the
entire graph $G$ as opposed to some unspecified $C$-reduction.
Formalizing this idea is the subject to the next two definitions.

\begin{Definition}
\zz{%
If $X$ is a set in a topological space, we define $\widetilde X:=\overline X\setminus X$.
A {\em painting} in a surface $\Sigma$ is a pair $\Gamma=(U,N)$,
where $N\subseteq U\subseteq \Sigma$, $N$ is finite,
$U\setminus N$ has finitely many arcwise-connected components, called {\em cells},
and for every cell $c$, the closure $\overline c$ is a closed disk
and $\widetilde c=\overline c\cap N\subseteq\hbox{\rm bd}(\overline c)$
satisfies $|\widetilde c|\le3$.
We define $N(\Gamma):=N$, $U(\Gamma):=U$ and denote the set of cells
of $\Gamma$ by $C(\Gamma)$.
Thus the cells of a painting define a hypergraph with hyperedges of cardinality at most of three by saying that
$c$ is incident with the elements~$\widetilde c$.}
\end{Definition}

\begin{Definition}
\zz{%
Let $G$ be a graph, and let $\Omega$ be a cyclic permutation of a set $V(\Omega)\subseteq V(G)$.
%$C$ be a cycle in $G$.
%, and let $X\subseteq V(C)$.
By an {\em $\Omega$-rendition} of  $G$ we mean
a triple $(\Gamma,\sigma,\pi)$, where
\begin{itemize}
\item $\Gamma$ is painting in the unit disk $\Delta$,
\item $\sigma$ assigns to each cell $c\in C(\Gamma)$
a subgraph $\sigma(c)$ of $G$,  and
\item $\pi:N(\Gamma)\to V(G)$ is an injection
\end{itemize}
such that
\begin{itemize}
\item[(P1)]$G=\bigcup\left(\sigma(c)\,:\,c\in C(\Gamma)\right)$,
\item[(P2)]$\sigma(c)$ and $\sigma(c')$ are edge-disjoint
for distinct $c,c'\in C(\Gamma)$,
\item[(P3)]$\pi(\widetilde c)\subseteq V(\sigma(c))$ for every cell
$c\in C(\Gamma)$, 
\item[(P4)]$V(\sigma(c)\cap\bigcup\left(\sigma(c')\,:\,c'\in C(\Gamma)\setminus\{c\}\right))\subseteq \pi(\widetilde c)$
for every cell $c\in C(\Gamma)$, and 
\item[(P5)]the image under $\pi$ of $N(\Gamma)\cap \hbox{bd}(\Delta)$
is $V(\Omega)$, mapping the cyclic 
order of bd$(\Delta)$ to the cyclic order of $\Omega$.
%$\pi^{-1}(V(C))$ is a subset of the boundary of $\Delta$, and appears
%on the boundary in the order given by $C$, and
\end{itemize}
A cycle $C$  defines a cyclic permutation of $V(C)$, and so we may speak of a {\em $C$-rendition}.}
%where $\sigma(-c)$ is defined as
%$\bigcup\left(\sigma(c')\,:\,c'\in C(\Gamma)-\{c\}\right)$.
%\item[(P5)]$V(\sigma(c)\cap\sigma(-c))\subseteq \pi(\widetilde c)$
%for every cell $c\in C(\Gamma)$,
%\end{itemize}
%where $\sigma(-c)$ is defined as
%$\bigcup\left(\sigma(c')\,:\,c'\in C(\Gamma)-\{c\}\right)$.
%\bigskip
\end{Definition}

%\begin{Definition}
%Let $G$ be a graph, and let $C$ be a cycle in $G$. 
%We say that $G$ is  \emph{$C$-flat}\?{$C$-flat} if there exist subgraphs
%$G_0,G_1,\ldots,G_k$  of $G$, and a plane graph $\Gamma$ 
%such that for all distinct indices $i,j=1,2,\ldots,k$
%\begin{enumerate}
%\item[(1)] $G=G_0\cup G_1\cup\cdots\cup G_k$, and the graphs 
%$G_0,G_1,\ldots,G_k$
%are pairwise edge-disjoint,
%\item[(2)] $C$ is a subgraph of $G_0$ and $G_0$ is a  subgraph
%of $\Gamma$ with the same vertex-set as $\Gamma$,
%\item[(3)] the cycle $C$ bounds the outer face of the  plane graph $\Gamma$,
%\item[(4)] $|V(G_i)\cap V(G_0)|\le 3$; 
%if $V(G_i)\cap V(G_0)=\{u,v\}$, then $u$ and $v$ are adjacent in $\Gamma$,
%and if $V(G_i)\cap V(G_0)=\{u,v,w\}$, then some finite face of $\Gamma$
%is incident with $u,v,w$ and no other vertex, and
%\item[(5)] $V(G_i)\cap V(G_j)\subseteq V(G_0)$.
%\end{enumerate}
%%If graphs as above exist we say that $G$ is  \emph{$C$-flat}\?{$C$-flat}.
%\end{Definition}

Using the above definitions we can extend Theorem~\ref{thm:crossreduct}
as follows.

\begin{Theorem}
\label{thm:crossredrend}
Let $G$ be a graph, and let $C$ be a cycle in $G$.
Then the following conditions are equivalent:
\begin{enumerate}
\item[{\rm(1)}] $G$ has no $C$-cross,
\item[{\rm(2)}]  some $C$-reduction of $G$
can be drawn in the plane with $C$ bounding a face, and
\item[{\rm(3)}] \zz{$G$ has a $C$-rendition.}
\end{enumerate}
\end{Theorem}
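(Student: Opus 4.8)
The plan is to use Theorem~\ref{thm:crossreduct}, which gives the implication (1)$\Rightarrow$(2) (indeed the equivalence (1)$\Leftrightarrow$(2)) for free, and to close the cycle of implications by proving (2)$\Rightarrow$(3) and (3)$\Rightarrow$(1).

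For (3)$\Rightarrow$(1) I would argue by contradiction: assume $G$ is $C$-flat with witnessing subgraphs $G_0,G_1,\ldots,G_k$ and plane graph $\Gamma$, and assume that $P_1,P_2$ is a $C$-cross. The preliminary observations are that, for each $i\in\{1,\ldots,k\}$, (a)~every component of $(P_1\cup P_2)\cap G_i$ that contains an edge is a path both of whose ends lie in $V(G_i)\cap V(G_0)$ --- this uses condition~(5), the inclusion $V(C)\subseteq V(G_0)$, and the fact that $P_1$ and $P_2$ can leave $G_i$ only through vertices of $V(G_i)\cap V(G_0)$ --- and (b)~since these components are pairwise vertex-disjoint and each of them uses two distinct vertices of the set $V(G_i)\cap V(G_0)$ of size at most three, there is at most one of them. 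I would then reroute $P_1$ and $P_2$ inside $\Gamma$: the unique nontrivial component of $(P_1\cup P_2)\cap G_i$, a path between two vertices $x,y$ of $V(G_i)\cap V(G_0)$, gets replaced by the edge $xy$ of $\Gamma$ when $|V(G_i)\cap V(G_0)|\le 2$ (such an edge exists by condition~(4)), and by a new arc from $x$ to $y$ drawn inside the finite face of $\Gamma$ incident with exactly $V(G_i)\cap V(G_0)$ when $|V(G_i)\cap V(G_0)|=3$. A short degree count shows that the arcs routed through any single face all belong to one of the two paths and together form at most two of the three edges of a triangle, so they can be drawn pairwise noncrossing; the result is a pair of disjoint arcs in the plane joining $s_1$ to $t_1$ and $s_2$ to $t_2$. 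Drawing $\Gamma$ with $C$ as the boundary circle of a closed disk (possible by condition~(3)), this contradicts the Jordan curve theorem, since $s_2$ and $t_2$ are separated by $\{s_1,t_1\}$ on $C$.

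For (2)$\Rightarrow$(3) I would fix a witnessing sequence of elementary $C$-reductions $G=G^{(0)}\to G^{(1)}\to\cdots\to G^{(m)}=J$ with $J$ planar and $C$ bounding a face, where $G^{(l)}\to G^{(l+1)}$ is determined by a separation $(A_l,B_l)$, and build the $C$-flat structure of $G$ by undoing the reductions in reverse order. When $l=m$ the graph $J$ is planar with $C$ bounding a face, hence $C$-flat with $k=0$, $\Gamma=J$ re-embedded so that $C$ bounds the outer face, and $G_0=J$. For the inductive step, suppose a $C$-flat structure of $G^{(l+1)}$ has been constructed; since $G^{(l)}$ is obtained from $G^{(l+1)}$ by deleting the edges of the clique $K_l$ on $A_l\cap B_l$ added by the reduction and gluing $G^{(l)}[B_l]$ back along $A_l\cap B_l$, I would delete the edges of $K_l$ from whichever pieces contain them and then reattach $G^{(l)}[B_l]$ --- as a brand new piece $G_{k+1}$ if $A_l\cap B_l\subseteq V(\Gamma)$, and otherwise by absorbing it into the unique piece $G_i$ ($i\ge1$) that meets $A_l\cap B_l$ outside $V(\Gamma)$. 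Conditions~(1), (2), (3) and (5) are then routine to verify.

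The delicate point, and the one I expect to be the main obstacle, is condition~(4) in the case $|A_l\cap B_l|=3$: the reinserted piece must be attached at three vertices lying on a common finite face of $\Gamma$ incident with no other vertex. This is not automatic, since the three edges of the triangle $K_l$ may be spread over several pieces of the $C$-flat structure of $G^{(l+1)}$, and even when all three attachment vertices lie in $V(\Gamma)$, the triangle through them need not bound a face. My intended remedy is to choose $J$ and the reduction sequence carefully --- for example, taking $J$ to be a maximal $C$-reduction and arranging by an uncrossing argument that the separations $(A_l,B_l)$ form a laminar family --- together with the observation that $\Gamma$ is allowed to carry edges not present in $G$, so that the needed triangular face can be produced; already the $4$-cycle with two extra vertices each joined to three of its vertices shows that $\Gamma$ may have to contain a chord of the cycle that is not an edge of $G$. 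What remains is then bookkeeping, complicated only by the parallel edges created by successive reductions.
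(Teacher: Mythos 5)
Your overall strategy --- prove (3)$\Rightarrow$(1), invoke Theorem~\ref{thm:crossreduct} for (1)$\Leftrightarrow$(2), and prove (2)$\Rightarrow$(3) --- is the natural one, and for what it is worth the paper does not prove this theorem at all: it cites Theorem~\ref{thm:crossreduct} for the first equivalence and then says of (2)$\Leftrightarrow$(3) only that it ``is not hard to see,'' omitting the details. Your argument for (3)$\Rightarrow$(1) is sound and complete: edge-disjointness, condition~(5), and $V(C)\subseteq V(G_0)$ force each cross path to meet $G_i$ only in subpaths whose ends lie in $V(G_i)\cap V(G_0)$; the bound $|V(G_i)\cap V(G_0)|\le3$ then leaves at most one nontrivial component of $(P_1\cup P_2)\cap G_i$; the two cross paths cannot both route through the same triangular face since together they would need four distinct attachment vertices there; and the rerouted arcs, lying in distinct open faces or along edges of $\Gamma$, are pairwise noncrossing, which gives the Jordan curve contradiction in the disk bounded by $C$.

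The genuine gap is exactly where you flag it. In the inductive reconstruction for (2)$\Rightarrow$(3), nothing in the $C$-flat structure you inherit for $G^{(l+1)}$ guarantees the finite face of $\Gamma$ incident with exactly $A_l\cap B_l$ that condition~(4) requires when $|A_l\cap B_l|=3$: the triangle created by the $l$-th reduction may be split across several pieces $G_i$, or may lie in $G_0\subseteq\Gamma$ without bounding a face, and the three vertices need not even be cofacial. Choosing $J$ maximal and the separations laminar is a reasonable starting point, but on its own it does not force the embedding to cooperate, and you do not show how to re-embed $\Gamma$ (and possibly add non-$G$ edges) so as to manufacture the required triangular face while preserving conditions (1)--(5) already established for earlier pieces and for the subpieces that may need to be re-absorbed. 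Until that step is carried out --- or replaced by a direct construction of the decomposition from the family of all 3-cuts of $G$ separating $C$, which would bypass the reduction sequence entirely --- the (2)$\Rightarrow$(3) direction is a plausible outline rather than a proof.
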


%The equivalence of (2) and (3) is not hard to see, but we omit the 
%details, because we do not need it in this paper.

\begin{proof}
\zz{%
The implication (1)$\Rightarrow$(2) holds by Theorem~\ref{thm:crossreduct}.
Let us now prove that (2)$\Rightarrow$(3)
 by induction on $|V(G)|$.
To that end 
let us assume that some $C$-reduction of $G$
can be drawn in the plane with $C$ bounding a face, and that the implication
(2)$\Rightarrow$(3) holds for all graphs on strictly fewer than $|V(G)|$ vertices.
We may assume that $G$ has no isolated vertices, because otherwise the 
implication follows by induction by deleting them.
Let us assume first that $G$ can be drawn in the plane with $C$ bounding a face.
We may assume that $V(C)$ is drawn on the boundary of the unit disk $\Delta$,
and that the rest of $G$ is drawn in the interior of $\Delta$.
We now construct a $C$-rendition as follows.
Let $F\subseteq E(G)$ be the set of all edges $e\in E(G)$ such that 
%either $e$ is not a loop, or $e$ is a loop and 
$e$ is not contained in the closed disk bounded by a loop
edge other than $e$, and let $V$ be the set of vertices $v\in V(G)$ that do not belong 
to the open disk bounded by a loop edge of $G$.
 For every edge $e\in F$ we
``fatten" $e$ into a disk $D_e$ in such a way that $e\subseteq D_e\subseteq\Delta$,
$D_e$ includes the two ends of $e$ in its boundary and is otherwise disjoint from $V$ and $F\setminus\{e\}$,
and  for distinct edges $e,e'\in F$ the intersection
$D_e\cap D_{e'}$ consists of common end(s) of $e$ and $e'$.
Let $\Gamma$ be the painting defined by $U(\Gamma)=\bigcup_{e\in F}D_e$
and $N(\Gamma)=V$. Thus each cell $c$ of $\Gamma$ includes an edge $e\in E(G)$
and we define $\sigma(c)$ to be the graph consisting of all vertices contained in $c$ 
and all edges contained in $c$ and their ends.
Thus  if $e$ is not  a loop, then it is the only edge contained in $c$.
Finally, we define $\pi:N(\Gamma)\to V(G)$ to be the identity.
Then $(\Gamma,\sigma,\pi)$ is a $C$-rendition of $G$, and hence (3) holds.
This completes the case when $G$ can be drawn in the plane with $C$ bounding a face.
}

\zz{%
We may assume now that some elementary $C$-reduction $G'$ of $G$ 
has a $C$-reduction that can be drawn in the plane with $C$ bounding a face.
Let $(A,B)$ be the separation of $G$  giving rise to the $C$-reduction~$G'$.
By the induction hypothesis $G'$ has a $C$-rendition $(\Gamma',\sigma',\pi')$.
If $A\cap B\subseteq\sigma'(c)$ for some $c\in C(\Gamma')$, then by adding
$G[B]$ to $\sigma'(c)$ we obtain a $C$-rendition of $G$, and so we may assume that
$A\cap B\not\subseteq\sigma'(c)$ for all $c\in C(\Gamma')$.
It follows that $|A\cap B|=3$, that $\pi(X)=A\cap B$ for some set $X\subseteq N(\Gamma)$
of size three, and that 
 every pair of elements of $X$ are incident
with a cell of $\Gamma'$. Thus there exists a closed disk $D$ whose boundary intersects $U(\Gamma')$
in $X$ only. Let the painting $\Gamma$ be defined by saying that $U(\Gamma)=U(\Gamma')\cup D$
and that $N(\Gamma)$ consists of all $n\in N(\Gamma')$ that do not belong to the interior of $D$.
%and $N(\Gamma)=N(\Gamma')\setminus \hbox{int}(D)$. 
Thus the cells of $\Gamma$ are $D\setminus X$
and all the cells of  $\Gamma'$ that are disjoint from $D$.
We define $\sigma(D\setminus X)$  to be the union of $G[B]$ and $\sigma'(c)$ over
all cells $c\in C(\Gamma')$ contained in $D$, and for cells  $c\in C(\Gamma')$  that are disjoint
from $D$ we define  $\sigma(c)= \sigma'(c)$.
Finally, let $\pi$ be the restriction of $\pi'$ to $N(\Gamma)$.
Then $(\Gamma,\sigma,\pi)$ is a $C$-rendition of~$G$.
This completes the proof of the implication (2)$\Rightarrow$(3).}

\zz{%
It remains to prove (3)$\Rightarrow$(1). We again proceed by induction on $|V(G)|$.
Let $(\Gamma,\sigma,\pi)$  be a $C$-rendition of $G$, and assume that the implication
(3)$\Rightarrow$(1) holds for all graphs on strictly fewer than $|V(G)|$ vertices.
Let us say that a cell $c\in C(\Gamma)$  is {\em slim} if $V(\sigma(c))\subseteq\pi(\widetilde c)$.
If every cell of $\Gamma$ is  slim, then it is easy to convert the $C$-rendition into
a drawing of $G$ in the plane with $C$ bounding a face, and hence $G$ has no $C$-cross, as desired.
We may therefore assume that there exists a cell $c\in C(\Gamma)$ that is not  slim.
Let $G'$ be obtained from $G$ by deleting $V(\sigma(c))-\pi(\widetilde c)$ and adding an edge
joining every pair of vertices in $\pi(\widetilde c)$.
It is easy to convert $(\Gamma,\sigma,\pi)$ to a  $C$-rendition of $G'$.
By induction the graph $G'$ has no $C$-cross, and it follows from the definition of $G'$
that neither does $G$, as desired.}
\end{proof}

%%%%%%%%%%%%%%%%%%%%%%%%%%%%%%%%%%%%%%%%%%%%%%%%%%%%%%%%%%%%%%%%%%%%%%
\subsection{The Flat Wall Theorem}
\label{sec:weak}

We are now ready to formulate the weaker version of the excluded
$K_t$ theorem of Robertson and Seymour~\cite[Theorem~9.8]{RS13}.
Let us begin by describing it informally.
We use $[r]$ to denote $\{1,2,\ldots,r\}$.
Let  $r,s\ge2$ be  integers.
An $r\times s$-grid is the graph with vertex-set $[r]\times[s]$ in which
$(i,j)$ is adjacent to $(i',j')$ if and only if $|i-i'|+|j-j'|=1$.
%Now let $r$ be an even.
An \emph{elementary $r$-wall}\?{elementary $r$-wall} is obtained from the  
$2r\times r$-grid
by deleting all edges with ends $(2i-1,2j-1)$ and $(2i-1,2j)$
for all $i=1,2,\ldots,r$ and $j=1,2,\ldots,\lfloor r/2\rfloor$
and all edges with ends  $(2i,2j)$ and $(2i,2j+1)$
for all $i=1,2,\ldots,r$ and $j=1,2,\ldots,\lfloor (r-1)/2\rfloor$
and then deleting the two resulting vertices of degree one.
An \emph{$r$-wall}\?{$r$-wall} 
is any graph obtained from an elementary $r$-wall by subdividing
edges.
In other words each edge of the elementary $r$-wall is replaced by 
a path. 
%those replacement paths will be called \emph{segments}\?{segment}.
Figure~\ref{fig:wall} shows an elementary $4$-wall.
Walls are harder to describe than grids, but they are easier to work with;  moreover, 
if a graph has a $2r\times2r$-grid  minor, then it has a subgraph
isomorphic to an $r$-wall.
Let $W$ be an $r$-wall, where $W$ is a subdivision of an elementary
wall $Z$. Let $X$ be the set of vertices of $W$ that
correspond to vertices $(i,j)$ of $Z$ with $j=1$, and let $Y$ be the
set of vertices of $W$ that correspond to vertices $(i,j)$ of $Z$ with 
$j=r$.
There is a unique set of
$r$ disjoint paths $Q_1,Q_2,\ldots,Q_r$ in $W$, such that each has
one end in $X$ and one end in $Y$, and no other vertex in $X\cup Y$.
 We may assume that the
paths are numbered so that the first coordinates of their vertices
are increasing.
We say that $Q_1,Q_2,\ldots,Q_r$ are the 
\emph{vertical paths}\?{vertical paths} of $W$.
There is a unique set of $r$ disjoint paths with one end in $Q_1$,
the other end in $Q_r$, and otherwise disjoint from $Q_1\cup Q_r$.
Those will be called the \emph{horizontal paths}\?{horizontal paths} of $W$.
Let $P_1,P_2,\ldots,P_r$ be the horizontal paths numbered in the 
order of increasing second coordinates.
Then $P_1\cup Q_1\cup P_r\cup Q_r$ is a cycle, and we will call it the
\emph{outer cycle}\?{outercycle} of $W$.
If $W$ is drawn as a plane graph in the obvious way, then this is
indeed the cycle bounding the outer face.
The sets $V(P_1\cap Q_1)$, $V(P_1\cap Q_r)$, $V(P_r\cap Q_1)$, and
$V(P_r\cap Q_r)$ each include exactly one vertex of $W$;
those vertices will be called the \emph{corners}\?{corners} of $W$.
In Figure~\ref{fig:wall} the four corners are circled.
The vertices of $W$ that correspond to vertices of $Z$ of degree two
will be called the \emph{pegs} of $W$.
Thus given $W$ as a graph the corners and pegs are not necessarily uniquely
determined.
Finally let $W,W'$ be walls such that $W'$ is a subgraph of $W$.
We say that $W'$ is a \emph{subwall}\?{subwall} of $W$ if every
horizontal path of $W'$ is a subpath of a horizontal path of $W$,
and every vertical path of $W'$ is a subpath of a vertical path of $W$.

\begin{figure}[htb]
 \centering
\includegraphics[scale = .75]{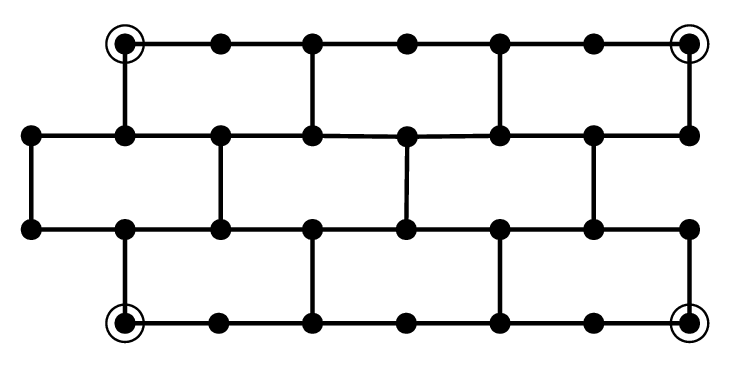}
 \caption{An elementary $4$-wall.}
\label{fig:wall}
\end{figure}

%for all $i=1,2,\ldots,r$ the
%vertices $(2i-1,j)$ and $(2i,j)$ for $j=2,3,\ldots,r-1$ plus the
%vertex $(2i,1)$ and 
%either the vertex $(2i-1,r)$ if $r$ is odd
%or the vertex $(2i,r)$ if $r$ is even

%the path $P_i$ with vertex-set
%$\{(i,j):1\le i\le 2r\}$ is called a horizontal path.
%There are $r$ disjoint paths in $W$ with one end in $V(P_1)$ and the other
%end in 
%HORIZONTAL VERTICAL PATHS, SUBWALL

Now let $W$ be a large wall in a graph $G$ with no $K_t$ minor.
The Flat Wall Theorem asserts that there exist a set of vertices
$A\subseteq V(G)$ of bounded size and a reasonably big subwall $W'$
of $W$ that is disjoint from $A$ and has the following property.
Let $C'$ be the outer cycle of $W'$.
The property we want is that $C'$ separates the graph $G-A$ into two graphs,
and the one containing $W'$, say $H$, can be drawn in the plane with
$C'$ bounding a face.
However, as the discussion of the previous subsection attempted to
explain, the latter condition is too strong.
The most we can hope for is for the graph $H$ to be $C'$-flat.
That is, in spirit, what the theorem will guarantee,
except that we cannot guarantee that all of $C'$ be part of a planar
$C'$-reduction of $H$.
The correct compromise is that some subset of $V(C')$ separates off
the wall $W'$, and it is that subset that is required to be incident with
one face of the planar drawing.
Here is the formal definition.
%There is another technical glitch---we cannot insist 
%given that $H$ is $C'$-flat, rather
%than planar, we cannot insist on the wall $W'$ to be separated off
%by $C'$, but have to settle for a subset thereof.
%Let us be precise now.

\begin{Definition}
Let $G$ be a graph, 
%let $r\ge2$ be an integer, 
and let $W$ be a wall in
$G$ with outer cycle $D$.
Let us assume that there exists a separation $(A,B)$ such that
$A\cap B\subseteq V(D)$,  $V(W)\subseteq B$,
and there is a choice of pegs of $W$ such that every peg belongs to $A$.
%and every segment of $W$ that is a subgraph of $D$ includes a vertex of $A$.
%Let $C'$ be a cycle with vertex-set $A\cap B$ and cyclic order
%determined by $C$.
%In other words, $C'$ may be obtained from $C$ by suppressing all
%vertices not in $A\cap B$.
If some $A\cap B$-reduction of $G[B]$ can be drawn in a disk with
the vertices of $A\cap B$ drawn on the boundary of the disk in the
order determined by $D$, then we say that
%If some $C$-reduction of $B\cup C$ can be drawn in the plane with
%$C$ bounding the outer face, then we say that 
the wall $W$ is \emph{flat in $G$}\?{flat in $G$}.
It follows that it is possible to choose the corners of $W$ is such
a way that every corner belongs to $A$.
%We also say that the separation $(A,B)$ \emph{flatens}\?{flatens} $W$.
\end{Definition}

We need one more definition. Given a wall $W$ in a graph $G$ we will 
(sometimes) produce a $K_t$ minor in $G$. 
However, this $K_t$ will not be arbitrary; it will be
very closely related to the wall $W$.
To make this notion precise we first notice that a $K_t$ minor in $G$
is determined by $t$ pairwise disjoint sets $X_1,X_2,\ldots,X_t$
such that each induces a connected subgraph and every two of the sets
are connected by an edge of $G$.
We say that $X_1,X_2,\ldots,X_t$ form a \emph{model}\?{model} of a $K_t$ 
minor and we will
refer to the sets $X_i$ as the \emph{branch-sets} of the model.
Often we will shorten this to a model of $K_t$.
\yy{%
Let $P_1,P_2,\ldots,P_r$ be the horizontal paths and 
$Q_1,Q_2,\ldots,Q_r$  the vertical paths of $W$.
We say that a model of a $K_t$ minor in $G$ is \emph{grasped}\?{grasped}
by the  wall $W$ if for every branch-set  $X_k$ of the model
there exist distinct indices $i_1,i_2,\ldots,i_t\in\{1,2,\ldots,r\}$ and distinct 
indices $j_1,j_2,\ldots,j_t\in\{1,2,\ldots,r\}$ such that $V(P_{i_l}\cap Q_{j_l})\subseteq X_k$
for all $l=1,2,\ldots,t$.
% intersects at least $t$ horizontal or at least $t$ vertical paths of the wall.
}%
Let us remark, for those familiar with the literature, that if 
a wall grasps a model of $K_t$, then 
\yy{the tangle determined by $W$}
controls it in the sense
of~\cite{RS16}.
The notion of control is important in applications, but since the stronger property is a consequence of the proof, 
we state the theorem that way.

We can now formulate the Flat Wall Theorem.
It first appeared in a slightly weaker form in~\cite[Theorem~9.8]{RS13} 
with an unspecified bound
on $R$ in terms of $t$ and $r$.

\begin{Theorem}
\label{thm:main}
Let $r,t \ge 1$ be integers, 
%let $r$ be even, 
let $R=49152t^{24}(\rt{40}t^2+r)$, let $G$ be a graph, and
let $W$ be an $R$-wall in $G$.
Then either $G$ has a model of a $K_t$ minor grasped by $W$, or there exist
a set $A\subseteq V(G)$ of size at most $12288t^{24}$ and an $r$-subwall
$W'$ of $W$ such that $V(W')\cap A=\emptyset$ and $W'$ is a flat wall
in $G-A$.
\end{Theorem}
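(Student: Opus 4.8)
The plan is to run an induction-free argument in two main phases: first produce, inside the huge $R$-wall $W$, either a $K_t$ minor grasped by $W$ or a large subwall $W_1$ together with a bounded set $A$ such that $W_1$ has no "cross" in the relevant sense in $G-A$; then, in the second phase, convert the absence of such crosses into flatness via Theorem~\ref{thm:crossreduct}. For the first phase I would argue by contradiction: suppose no model of $K_t$ is grasped by $W$. Call two disjoint paths in $G$ with all four endpoints on the outer cycle of a subwall, otherwise disjoint from it and from each other, a cross of that subwall (the $C$-cross of Theorem~\ref{thm:crossreduct} with $C$ the outer cycle). A cross that is suitably "non-local" can be used, together with the grid-like routing structure of the wall, to reroute vertical and horizontal paths and build a $K_t$ model; the point is that a wall of size polynomial in $t$ already contains a $K_t$ minor grasped by it once one has enough disjoint crosses all touching a common region, by the standard "many crossings force a clique minor" counting (this is the $\Theta(t^2)$-term $40t^2$ in $R$, giving the $\Theta(t)$ many disjoint nested subwalls each contributing a branch-set). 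So if there is no grasped $K_t$, then in $G$ there can be only boundedly many pairwise-disjoint crosses anchored in $W$; by a Menger/LP-duality argument this yields a set $A$ of size $O(t^{24})$ hitting all "long" crosses, after which a sub-wall $W_1$ of size about $R/(\text{const}\cdot t^{2})$ survives disjoint from $A$ and has no cross reaching far across it in $G-A$.

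The second phase is local and cleaner. Let $W'$ be a subwall of $W_1$, slightly smaller, chosen so that its outer cycle $C'$ lies "well inside" $W_1$ and the pegs of $W'$ are matched by short disjoint paths to the exterior part of $W_1$ (this is exactly the separation $(A_0,B_0)$ with $A_0\cap B_0\subseteq V(C')$, $V(W')\subseteq B_0$, pegs in $A_0$, demanded by the definition of flat wall; the matching paths exist because $W_1$ is a genuine wall and $W'$ a subwall of it). Apply the cross-freeness from phase one to conclude $G[B_0]$ has no $C'$-cross: any such cross, composed with the peg-to-exterior linkage and the ambient wall structure, would give a long cross in $W_1$ of the kind $A$ was chosen to kill, a contradiction. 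Now Theorem~\ref{thm:crossreduct} applies verbatim: $G[B_0]$ has no $C'$-cross, hence some $C'$-reduction of $G[B_0]$ is planar with $C'$ bounding a face, which is precisely the statement that $W'$ is flat in $G-A$. Finally one checks the arithmetic: the survival of an $r$-subwall requires $R$ to exceed (constant)$\,\cdot t^{24}\cdot(40t^{2}+r)$, which is how $R=49152t^{24}(40t^2+r)$ and $|A|\le 12288t^{24}$ are calibrated.

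The main obstacle is the first phase — quantitatively producing the bounded hitting set $A$ and the surviving subwall with the right polynomial dependence on $t$. The qualitative statement ("no grasped $K_t$ implies few disjoint crosses") is the heart of the Flat Wall Theorem, and the difficulty is entirely in doing the crossing-to-clique-minor construction economically: one must show that a small number of disjoint crosses, all anchored in a common window of the wall, can be stitched together with rerouted horizontal and vertical paths into $t$ connected pairwise-adjacent branch-sets, each meeting $\ge t$ paths of the wall (so that the model is genuinely grasped). Getting the constants down to the stated $49152$ and $12288$ — rather than tower-type or merely polynomial-with-huge-exponent bounds — is where the careful bookkeeping of wall sizes, the number of nested subwalls, and the Menger argument for $A$ all have to be pushed simultaneously; everything else reduces to Theorem~\ref{thm:crossreduct} and routine wall manipulations.
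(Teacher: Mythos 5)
Your proposal captures the broad two-phase spirit — first delete a bounded set $A$ to kill ``long'' interactions with $W$, then use cross-freeness and Theorem~\ref{thm:crossreduct} to certify flatness — but several steps that you treat as routine are in fact where the theorem's real content and difficulty live, and as written they have genuine gaps.

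First, your Phase~1 is organized around ``few disjoint crosses,'' but the paper's hitting-set argument does not bound crosses directly; it bounds $M$-paths. Specifically, it proves an Erd\H{o}s--P\'osa theorem for disjoint $M$-paths whose endpoints are pairwise far apart in mesh distance (Lemmas~\ref{lem:spreadpaths} and~\ref{lem:diversepaths}), showing that either there are $k$ such $R$-dispersed/$R$-semi-dispersed paths — which, combined with Lemma~\ref{lem:grida}, yields a grasped $K_t$ — or a set $A$ of size $O(k)$ and a set $Z$ of $O(k)$ ``centers'' exist such that every remaining $M$-path either has nearby endpoints or lands near $Z$. The object being counted is a single path, not a cross, and the duality is not a Menger or LP argument but a constructive maximality/augmenting-path argument. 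Your ``boundedly many pairwise-disjoint crosses anchored in $W$'' formulation would need its own proof and doesn't directly give the needed control over which vertices of $W$ can still be connected by bridges after deleting $A$.

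Second, and more seriously, you work with a single surviving subwall $W_1$ and immediately conclude cross-freeness. The paper instead picks a clean strip disjoint from $A\cup Z$, places $t(t-1)$ pairwise far-apart subwalls $W_1,\dots,W_{t(t-1)}$ inside it, forms auxiliary graphs $H_i$ (the subwall, a cycle $C_i$ on its corners, and the attached bridges restricted appropriately), and proves these $H_i$ are pairwise disjoint using the Phase~1 distance control. The crucial dichotomy is: if \emph{every} $H_i$ has a $C_i$-cross, then the $t(t-1)$ crossings can be routed into an $H^1_{t(t-1)}$ minor compatible with $W$, and Lemma~\ref{lem:H1} gives a grasped $K_t$; otherwise some $H_i$ is cross-free. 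You have only one chance, not $t(t-1)$, to find a cross-free window, which does not match the structure of the argument and is where the $t^2$-term in $R$ actually comes from.

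Third, ``Theorem~\ref{thm:crossreduct} applies verbatim\ldots which is precisely the statement that $W'$ is flat'' elides a non-trivial step. Theorem~\ref{thm:crossreduct} gives you a planar $C_i$-reduction of the auxiliary graph $H_i$ with $C_i$ (the 4-cycle on the corners) bounding a face — but the definition of flatness requires a separation $(X,Y)$ of $G-A$ with $X\cap Y\subseteq V(D)$ (the outer cycle of the smaller $W'$), $V(W')\subseteq Y$, pegs of $W'$ in $X$, and a planar $X\cap Y$-reduction of $G[Y]$ with the boundary in the cyclic order of $D$. Converting one to the other is exactly the content of Lemma~\ref{lem:flatwalllike} (and the preparatory Lemma~\ref{lem:optreduct} on optimal $C$-reduction sequences), which the paper proves by a careful induction tracking how separations push through a reduction sequence, plus a separate verification that the resulting $(X,Y)$ really is a separation of $G-A$ (this again uses the Phase~1 distance constraints to rule out rogue bridges). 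This conversion is not ``verbatim'' and omitting it leaves a real gap in your argument.
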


\yy{%
We can use Theorem~\ref{thm:main} to obtain an approximate  characterization 
of graphs with no large clique minor, as follows.
}

\yy{%
\begin{Theorem}
\label{thm:approxchar}
Let $r,t \ge 1$ be integers, 
let $R=49152t^{24}(\rt{40}t^2+r)$, and let $G$ be a graph.
If $G$ has no $K_t$ minor, then for every  $R$-wall $W$ in $G$ there exist
a set $A\subseteq V(G)$ of size at most $12288t^{24}$ and an $r$-subwall
$W'$ of $W$ such that $V(W')\cap A=\emptyset$ and $W'$ is a flat wall in $G-A$.
Conversely, if $t\ge2$ and $r\ge80t^{12}$ and for every  $R$-wall $W$ in $G$ there exist
a set $A\subseteq V(G)$ of size at most $12288t^{24}$ and an $r$-subwall
$W'$ of $W$ such that $V(W')\cap A=\emptyset$ and $W'$ is a flat wall in $G-A$,
then $G$ has no $K_{t'}$ minor, where $t'=2R^2$.
\end{Theorem}
}

\yy{%
\begin{proof}
The first part of the theorem follows immediately from Theorem~\ref{thm:main}.
To prove the converse suppose for a contradiction that 
$r\ge123t^{12}$ and that $G$ has a $K_{t'}$ minor, and yet for every  $R$-wall $W$ in $G$ there exist
a set $A\subseteq V(G)$ of size at most $12288t^{24}$ and an $r$-subwall
$W'$ of $W$ such that $V(W')\cap A=\emptyset$ and $W'$ is a flat wall in $G-A$.
Let $W_0$ be the elementary $R$-wall.
We may assume that $G$ has a $K_{t'}$ minor with model $(X_v\,:\,v\in V(W_0))$.
Since $W_0$ has maximum degree at most three, it follows that there exists a 
a subgraph $W$ of $G$ isomorphic to a subdivision of $W_0$  such that
\begin{itemize}
\item for every vertex $v\in V(W_0)$  the corresponding vertex of $W$ belongs to $X_v$, and
\item for every edge $uv\in E(W_0)$ the vertex-set of the corresponding path of $W$ 
is contained in $X_u\cup X_v$.
\end{itemize}
Thus $W$ is an $R$-wall in $G$, and hence  there exist
a set $A\subseteq V(G)$ of size at most $12288t^{24}$ and an $r$-subwall
$W'$ of $W$ such that $V(W')\cap A=\emptyset$ and $W'$ is a flat wall in $G-A$.
Let $W_0'$ be the elementary subwall of $W_0$ that corresponds to $W'$.
Since $t\ge2$ and $r\ge80t^{12}$  there exist five distinct vertices $v_1,v_2, \dots,v_5\in V(W_0')$
such that none of them belongs to the outer cycle of $W_0'$ and
$X_{v_i}\cap A=\emptyset$ for all $i=1,2,\ldots,5$.
Let $(X,Y)$ be a separation of $G-A$ as in the definition of a flat wall.
Since $X\cap Y$ is a subset of the vertex-set of the outer  cycle of $W'$ we deduce
that $X_i\cap X\cap Y=\emptyset$, and hence $X_i\subseteq Y$ for all $i=1,2,\ldots,5$.
Thus $X_{v_1},X_{v_2},\ldots,X_{v_5}$ is a model of a $K_5$ minor in $G[Y]$.
Furthermore, by considering the vertices $v_1,v_2,v_3,v_4,v_5$ of the wall $W'$
we conclude that there exist four disjoint paths in $W'$, and hence in $G[Y]$, such that the $i$-th  path has
one end in $X_{v_{j_i}}$ and the other end a peg of $W'$, and none of the paths has an internal 
vertex in any of the sets~$X_{v_j}$, where $j_1,j_2,j_3,j_4$ are pairwise distinct.
However, the existence of the $K_5$ minor and the four disjoint paths contradict the fact
that some $X\cap Y$-reduction of $G[Y]$ can be drawn in a disk.
% with $X\cap Y$ drawn on the boundary of
\end{proof}
}

An earlier version of this paper as well as other articles refer to Theorem~\ref{thm:main}
as the Weak Structure Theorem. However, we prefer the current name, because it gives a more
accurate description of the result.

By  Theorem~\ref{thm:grid}
%the excluded grid theorem~\cite{RS5} of Robertson and Seymour
every graph of sufficiently large tree-width has an $R$-wall.
It follows from~\cite{kk} that in Theorem~\ref{thm:main}
the hypothesis that $G$ have an $R$-wall can be replaced by the assumption
that $G$ have tree-width at least $t^{\Omega(t^2\log t)}R$,
and by~\cite{ChuImproved} it can be replaced by the assumption 
that $G$ have tree-width at least $\Omega(R^{19}\hbox{ poly }\log R)$.

We prove Theorem~\ref{thm:main} in Section~\ref{sec:proof}.
Our proof is self-contained, but it is inspired by the Graph Minors
series of Robertson and Seymour.
\rt{%
%The bound $|A|\le t-5$ was recently obtained by 
Giannopoulou and Thilikos~\cite{GiaThi} improved the bound on the size of $A$ to the best
possible bound of $|A|\le t-5$.
Their proof uses Theorem~\ref{thm:gm16}, and therefore does not give an explicit bound
on $R$ as a function of $t$.
In Section~\ref{sec:fewapex} we deduce the bound of $|A|\le t-5$ from Theorem~\ref{thm:main}
by an elementary argument with an explicit bound on $R$, as follows.
}
 
%give an improved bound on the set $A$ in Theorem \ref{thm:main} at the expense 
%of substantially worsening the bound on $R$.  A consequence of the proof is the additional property that each of the vertices 
%of $A$ is in a sense highly connected to the resulting wall $W'$.  We leave the details for Section \ref{sec:fewapex}.
%prove the following variation,
%where the bound on the size of $A$ is improved
%to the best possible bound of $|A|\le t-5$ at the expense of
%substantially worsening the bound on $R$.

%Thus we have the following variation, which we deduce from  
%Theorem~\ref{thm:main} in Section~\ref{sec:proof}.
%We omit further details.

\begin{Theorem}
\label{thm:mainvar}
Let $t \ge 5$ and $r \ge 3\lceil\sqrt t\rceil$ be integers.
Let $n=12288t^{24}$, $R=r^{2^n}$ and
%$R=r^{2^{ (12288t^{24})}}$ 
% $R_0=49152t^{24}(\rt{40}t^2+R)$.
$R_0=\yy{49152t^{25}(\rt{40}t+R)}$.
Let $G$ be a graph, and
%Let $r\ge1$ and $t \ge 5$ be integers,
%let $R=(111t^{12})^{t-4}\sqrt t(r+1)$,
%let $R_0=49152t^{24}(12t^2+R)$,
%let $G$ be a graph, and
let $W_0$ be an $R_0$-wall in $G$.
Then either $G$ has a model of a $K_t$ minor grasped by $W_0$, or there exist
a set $A\subseteq V(G)$ of size at most $t-5$ and an $r$-subwall
$W$ of $W_0$ such that $V(W)\cap A=\emptyset$ and $W$ is a flat wall
in $G-A$.
\end{Theorem}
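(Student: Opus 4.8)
The plan is to bootstrap Theorem~\ref{thm:main} a bounded number of times to whittle the apex set $A$ down from size $12288t^{24}$ to size $t-5$. The key observation is that applying Theorem~\ref{thm:main} produces a set $A$ and a flat $r$-subwall $W'$ of $W$ in $G-A$; if $|A|\ge t-4$, then---because $W'$ is still a wall of size $r\ge 3\lceil\sqrt t\rceil$ in $G-A$---one can try to reapply the theorem \emph{inside} the flat part together with $A$ to find an even smaller apex set, or else extract a $K_t$ minor grasped by the wall. Concretely, I would argue as follows. Start with the $R_0$-wall $W_0$ and apply Theorem~\ref{thm:main} (with parameter $R$ in place of $r$, which is legitimate since $R_0=49152t^{24}(\rt{40}t^2+R)$): either we get a $K_t$ minor grasped by $W_0$ and are done, or we get a set $A_1$ with $|A_1|\le 12288t^{24}$ and a flat $R$-subwall $W_1$ of $W_0$ in $G-A_1$. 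Now iterate: since $R=r^{2^n}$ with $n=12288t^{24}$, the wall $W_1$ is large enough that we can afford to repeatedly pass to smaller subwalls and still end with something of size $\ge r$. The tower-of-exponentials bound on $R$ is exactly what is needed to absorb up to $n$ rounds of ``replace the wall by a subwall whose size is an iterated logarithm of the current size''.

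The heart of the argument is the reduction step: given a flat wall $W_i$ in $G-A_i$ with $|A_i|$ still too large, show that either $W_i$ grasps a $K_t$ minor in $G$ (using the vertices of $A_i$ as extra branch-sets, which is where flatness and the size $r\ge 3\lceil\sqrt t\rceil$ of $W_i$ come in---a wall of this size in a planar-like piece contains enough disjoint paths to connect $|A_i|$ apex vertices into a clique minor once $|A_i|$ is large), or else we may delete one vertex from $A_i$ at the cost of shrinking $W_i$ to a subwall $W_{i+1}$ and obtaining a new flat-wall decomposition. In the former case we must check the grasped condition: each branch-set (an apex vertex together with a bundle of wall paths routed to it, plus the wall-rows themselves) meets $\ge t$ horizontal or $\ge t$ vertical paths, which holds because we chose $r$ at least $3\lceil\sqrt t\rceil$ so that a $\lceil\sqrt t\rceil\times\lceil\sqrt t\rceil$ sub-grid pattern yields $t$ branch-sets each crossing enough paths. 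Since $|A_1|\le 12288t^{24}=n$, after at most $n-(t-5)\le n$ iterations we reach $|A|\le t-5$; the surviving wall has size at least the $n$-fold iterated ``log'' of $R=r^{2^n}$, which is at least $r$, so $W$ is an $r$-subwall of $W_0$ as required, and $V(W)\cap A=\emptyset$ because at every stage the subwall was disjoint from the current apex set and we only ever removed vertices from the apex set.

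The main obstacle I anticipate is making the single reduction step precise: one must show that a flat wall of size $3\lceil\sqrt t\rceil$ together with at least $t-4$ apex vertices \emph{already} yields a $K_t$ minor grasped by the wall. This requires routing $t-4$ internally disjoint path systems from the apex vertices into the flat (essentially planar, up to $3$-separations) region and organising them, together with $\lceil\sqrt t\rceil^2\ge t-?$ ``brick'' branch-sets carved from the wall, into a $K_t$ model---and then verifying each branch-set grasps the wall. The flatness definition (separation $(A,B)$ with $A\cap B\subseteq V(D)$, pegs in $A$, and a planar $A\cap B$-reduction) is exactly set up so that Menger-type arguments inside $G[B]$ give the needed disjoint paths, but bookkeeping the $3$-cutset reductions and the order of the boundary vertices along $D$ is delicate. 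A secondary nuisance is purely arithmetic: confirming that the tower $R=r^{2^n}$, $R_0=49152t^{24}(\rt{40}t^2+R)$ survives $n$ rounds of subwall-shrinking, which amounts to checking that applying Theorem~\ref{thm:main} to an $s$-wall leaves a subwall of size at least some fixed function of $s$ that, iterated $n$ times starting from $R$, exceeds $r$---routine once the per-step loss is identified, but it must be stated cleanly.
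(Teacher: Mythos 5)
Your overall plan is essentially the paper's: the proof in Section~\ref{sec:fewapex} does indeed bootstrap Theorem~\ref{thm:main} and then iteratively shaves apex vertices off while passing to smaller subwalls, with the tower $R=r^{2^n}$ absorbing the cumulative wall shrinkage. The threshold $t-5$ and the hypothesis $r\ge3\lceil\sqrt t\rceil$ play exactly the roles you anticipate. So the architecture is right.

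However, the central reduction step is where the real content lives, and your account of it is not correct as stated. You claim that a flat wall of size $3\lceil\sqrt t\rceil$ together with $\ge t-4$ apex vertices already yields a $K_t$ minor grasped by the wall. That is false: an apex vertex that has, say, only one neighbour in the flat part contributes nothing to a clique model. What is actually needed is a dichotomy \emph{on each individual apex vertex} according to its attachment pattern into the wall --- roughly, either $a$ sends edges to many distinct rows or columns (equivalently, to many pairwise far-apart subwalls), in which case $a$ can serve as a branch-set attaching to everything, or $a$'s attachments are confined to a bounded set of rows and columns, in which case one may choose a large subwall avoiding them and then drop $a$ from the apex set. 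This is precisely what the paper packages into the stronger Theorem~\ref{thm:mainvar2}, which guarantees that in the final outcome \emph{every} vertex of $A$ ``attaches throughout the wall $W$''; once one knows this, $|A|\ge t-4$ together with a $3\lceil\sqrt t\rceil$-wall does give a grasped $K_t$ model by routing $\binom{|A|}{2}$ connecting paths and four wall-internal branch-sets. Your proposal never identifies the attachment dichotomy, which is the key idea.

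Two further gaps. First, when you drop a vertex $a$ from $A_i$ and shrink $W_i$, you must show that the shrunk subwall is flat in $G-(A_i\setminus\{a\})$, not merely in $G-A_i$: re-introducing $a$ to the graph could a priori create a cross over the new outer cycle. The point is that $a$'s neighbours in the old flat part $G[Y]$ all lie far from the new subwall, so they end up in the $X$-side of the new separation; this requires a Lemma~\ref{lem:flatwalllike}-style argument and is not automatic. Second, the arithmetic commentary is off: the per-step loss in wall size is a polynomial (essentially root-like) shrinkage, not an ``iterated logarithm''; $R=r^{2^n}$ is the right order because taking a square-root-sized subwall $n$ times brings $r^{2^n}$ down to $r$, whereas $n$-fold iterated logarithm of $r^{2^n}$ is nowhere near $r$.
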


In fact, in Theorem~\ref{thm:mainvar2} we prove a stronger result asserting that the set $A$ and subwall $W$ may be chosen
in such a way that every vertex of $A$ attaches throughout the wall $W$.
In Section~\ref{sec:fewapex} we prove another variation, where in the second outcome
we are able to conclude that if $(X,Y)$ is a separation that witnesses that $W$ is a flat wall,
then $G[Y]$ has bounded tree-width (or, equivalently, has no big wall).
That conclusion is useful in algorithmic applications, but in order to obtain it we need to
drop the conditions that the $K_t$ minor is grasped by the wall $W_0$ and that
the desired wall $W$ is a subwall of $W_0$. 

\begin{Theorem}
\label{thm:mainvar3}
Let $r\ge2$ and $t \ge 5$ and 
%$rt \ge 3\lceil\sqrt t\rceil$ 
be integers,
let $n=12288t^{24}$ and
%, $R=(rt)^{2^n}$ and
%$R=r^{2^{ (12288t^{24})}}$ 
 $R_0=49152t^{24}(\rt{40}t^2+(rt)^{2^n})$ and let $G$ be a graph
with no $K_t$ minor.
If $G$ has an $R_0$-wall,
then there exist
a set $A\subseteq V(G)$ of size at most $t-5$ and an $r$-wall
$W$ in $G$ such that $V(W)\cap A=\emptyset$ and $W$ is a flat wall
in $G-A$.
Furthermore, if $(X,Y)$ is a separation as in the definition of flat wall,
then the graph $G[Y]$ has no $(R_0+1)$-wall.
\end{Theorem}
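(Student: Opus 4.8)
The plan is to derive the theorem from the ``attaches throughout'' strengthening of Theorem~\ref{thm:mainvar} (Theorem~\ref{thm:mainvar2}) by an extremal argument; the purpose of that strengthening is to keep the apex set bounded when a flat wall has to be re-extracted. First I would apply Theorem~\ref{thm:mainvar2} to $G$ with its $R_0$-wall, using the parameter $rt$ in place of ``$r$''. This is legitimate: since $t\ge5$ and $r\ge2$ we have $rt\ge2t\ge3\lceil\sqrt t\,\rceil$, and replacing $r$ by $rt$ turns the number called $R_0$ there into $49152t^{24}(40t^2+(rt)^{2^n})$, our $R_0$. As $G$ has no $K_t$ minor the ``$K_t$ minor grasped by the wall'' outcome cannot occur, so we obtain a set $A$ with $|A|\le t-5$ and an $rt$-wall disjoint from $A$ that is flat in $G-A$ with every vertex of $A$ attaching throughout it. Hence the family $\zF$ of all triples $(A,W,(X,Y))$ with $|A|\le t-5$, $W$ an $rt$-wall in $G-A$ disjoint from $A$, and $(X,Y)$ a separation of $G-A$ witnessing that $W$ is flat in $G-A$, is non-empty; fix such a triple with $|Y|$ minimum, let $D$ be the outer cycle of $W$, and note $(G-A)[Y]=G[Y]$ since $A\cap Y=\emptyset$. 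Using the routine fact that a subwall of a flat wall is flat (with a witnessing separation whose $Y$-side sits inside the original one), some $r$-subwall of $W$ is flat in $G-A$; since a subgraph of $G[Y]$ inherits the absence of large walls, it now suffices to prove that $G[Y]$ has no $(R_0+1)$-wall.

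Suppose for a contradiction that $G[Y]$ contains an $(R_0+1)$-wall $W^\ast$. By the definition of flat wall, $G[Y]$ is obtained from a plane graph by re-attaching subgraphs along cutsets of order at most three, each disjoint from $X\cap Y$. Since a large wall lies on one side of every separation of order at most three, $W^\ast$ contains an $rt$-subwall $W'$ lying wholly in the planar part of $G[Y]$, or wholly inside one of the attached subgraphs $H'$. In the first case $W'$ is a wall contained in the planar part and is therefore flat in $G[Y]$, with a witnessing separation $(X',Y')$ whose cut lies on the outer cycle $D'$ of $W'$ and whose $Y$-side is strictly smaller than $Y$; after first cleaning $(X,Y)$ so that $X\cap Y$ is small and then choosing $W'$ so that the region it bounds is disjoint from $X\cap Y$ — possible because $W^\ast$ is astronomically larger than any relevant parameter — the separation $(X\cup X',Y')$ witnesses that $W'$ is flat in $G-A$, its cut $(X\cap Y')\cup(X'\cap Y')=X'\cap Y'$ being contained in $V(D')$. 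Thus $(A,W',(X\cup X',Y'))\in\zF$ with a strictly smaller $Y$-side, contradicting minimality.

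In the second case I recurse. The subgraph $H'$ is strictly smaller than $G[Y]$, has no $K_t$ minor, and contains a large wall inherited from $W^\ast$; applying Theorem~\ref{thm:mainvar2} to $H''=G[V(H')\cup\{a\in A: a$ has a neighbour in $H'\}]$ with parameter $rt$ yields a set $A^\star$ with $|A^\star|\le t-5$ and an $rt$-subwall $W'$ of $W^\ast$ that is flat in $H''-A^\star$ with every vertex of $A^\star$ attaching throughout $W'$. Old apices with no neighbour in $H'$ are irrelevant to $H''$ and its flat structure, so one argues that $W'$ is flat in $G-A^\star$ via a separation whose $Y$-side lies in $V(H'')\setminus A^\star\subsetneq Y$ — again contradicting minimality — unless realising this separation in $G$ forces some further old apices, necessarily vertices of $A$, into the apex set. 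By placing $W'$ deep inside $W^\ast$ one ensures that any such interfering old apex attaches throughout $W'$; if together with $A^\star$ they number at least $t-4$, then these vertices, plus four more branch sets carved out of the large wall $W'$ with all links routed through $W'$, form a model of a $K_t$ minor in $G$, contrary to hypothesis. Hence in every case we contradict the choice of $(A,W,(X,Y))$, so $G[Y]$ has no $(R_0+1)$-wall, as required.

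I expect the main obstacle to be the pendant case: showing that the re-extracted wall $W'$ is flat in $G$ after deleting at most $t-5$ vertices. This requires analysing how the old apex set attaches to the complicated piece $H'$, choosing the location of $W'$ inside the enormous wall $W^\ast$ so that every interfering old apex attaches throughout $W'$, cleaning the separation $(X,Y)$ so that its cut is small enough to be avoided, and invoking the bound — forced by the absence of a $K_t$ minor — on the number of vertices that can simultaneously attach throughout a $3\lceil\sqrt t\,\rceil$-wall; this last point is exactly why the ``attaches throughout'' conclusion of Theorem~\ref{thm:mainvar2} is needed. The reduction in the first paragraph, the fact that a wall contained in the planar part is flat, and the transfers of separations between $G[Y]$, $H''$, and $G-A$ are routine.
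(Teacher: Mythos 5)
The overall shape of your argument — apply the attaches‑throughout strengthening with parameter $rt$, minimize the size of the $Y$‑side of a witnessing separation, and derive a contradiction by re‑extracting a flat wall from a hypothetical large wall $W^\ast$ in $G[Y]$ — is a sensible extremal strategy and very plausibly the one the paper takes (the paper explicitly says it must drop the ``grasped'' and ``subwall of $W_0$'' conditions, which is exactly what re‑extraction from deep inside $Y$ forces). But as written, the second case of your dichotomy is not a proof, and the step you concede as ``the main obstacle'' is precisely where the argument is missing.

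Concretely, there are two gaps. First, the dichotomy ``$W^\ast$ has an $rt$‑subwall wholly in the planar part of $G[Y]$ or wholly in one attached piece $H'$'' is asserted but not justified. The reduction structure attaches pieces along $\le 3$‑cuts, but there can be arbitrarily many such pieces; the fact that a large wall, after deleting the union of all the pockets, still leaves a large \emph{subwall} (not merely many vertices) in a single part requires an argument — standard, using high connectivity of walls plus a tangle/pigeonhole count, but it needs to be stated and the subwall size tracked, since you then feed it into Theorem~\ref{thm:mainvar2} again and the thresholds must still be met.

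Second and more seriously, your handling of the old apices in Case~(b) does not go through as stated. After applying Theorem~\ref{thm:mainvar2} to $H''$, an old apex $a\in A\setminus A^\star$ may land in $Y''\setminus X''$, i.e.\ in the interior of the new flat‑wall structure of $H''$. Such an $a$ certainly must go into the new apex set (it has neighbours outside $H''$), but there is no reason it must ``attach throughout $W'$.'' Its membership in $Y''\setminus X''$ is determined by how it is drawn in the reduced planar graph of $H''[Y'']$ — it is enough for $a$ to have a single neighbour that happens to fall inside the disk, even very far from $W'$. Placing $W'$ deep inside $W^\ast$ controls which vertices of $W^\ast$ are near $W'$, but it does not control where an apex's neighbours in $H'$ are drawn after the reductions. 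Consequently you cannot invoke the ``$t-4$ attaching‑throughout apices give $K_t$'' lemma to cap the count: you could plausibly accumulate up to $|A^\star|+|A|\le 2(t-5)$ apices with no contradiction available. To close the gap you would have to show either that such an offending $a$ can be evaded by passing to a yet smaller subwall of $W'$ (so it is never forced into the apex set), or that the total of forced apices remains $\le t-5$ by some other mechanism; neither is supplied.

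A smaller point: in Case~(a) the phrase ``cleaning $(X,Y)$ so that $X\cap Y$ is small'' is misleading, since $X\cap Y$ is a subset of the (arbitrarily subdivided) outer cycle of $W$ and need not be small. What actually matters there is that $W^\ast$, being huge and lying entirely in the planar part, contains an $rt$‑subwall whose closed disk in the drawing avoids $X\cap Y$, so that its $Y$‑side is a proper subset of $Y$. That part of the argument is fine in substance; the wording is just off.
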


In Section~\ref{sec:algo} we convert the proof of Theorem~\ref{thm:main}
into a polynomial-time algorithm, as follows.

\begin{Theorem}
\label{alg:main}
There is an algorithm with the following specifications.\\
{\bf Input:} A graph $G$ on $n$ vertices and $m$ edges,
integers $r,t\ge1$,
and an $R$-wall $W$ in $G$, where
$R=49152t^{24}(\rt{60}t^2+r)$.\\
{\bf Output:}
Either a model of a $K_t$ minor in $G$ grasped by $W$, or
a set $A\subseteq V(G)$ of size at most $12288t^{24}$ and an $r$-subwall
$W'$ of $W$ such that $V(W')\cap A=\emptyset$ and $W'$ is a flat wall
in $G-A$.\\
{\bf Running time:} $O(t^{24}m+\yy{n})$.
\end{Theorem}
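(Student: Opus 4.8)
The plan is to revisit the proof of Theorem~\ref{thm:main} given in Section~\ref{sec:proof} and to replace each existential step in it by an efficient constructive procedure, while keeping careful track of the total running time. That proof is in essence an iteration: as long as the current subwall has not been certified flat in the current graph, one locates a vertex that is richly connected to the wall, moves it into the apex set $A$, and passes to a substantially smaller subwall; when no such vertex can be found one either reads off a model of $K_t$ grasped by the wall or certifies the current $r$-subwall flat. The algorithm runs exactly this loop. Since the apex set produced has size at most $12288t^{24}$, the loop executes $O(t^{24})$ times, which accounts for the term $O(t^{24}m)$ once each iteration is shown to be implementable in linear time. The slight increase of the leading constant inside $R$ (from $40t^2$ in Theorem~\ref{thm:main} to $60t^2$ here) is precisely the slack we spend so that the fast, but mildly lossy, versions of the routing and flatness subroutines still leave a genuine $r$-subwall at the end: morally the efficient subroutine loses a constant factor somewhere in the wall-shrinking bookkeeping, and a larger starting wall absorbs it.

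Concretely I would isolate three primitives. \emph{Linkages inside the wall}: every appeal to Menger's theorem to route a bounded number $k=O(t)$ of disjoint paths between prescribed vertex sets becomes a vertex-capacitated maximum-flow computation of value at most $k$, each augmenting path found by one breadth-first search in $O(m)$ time, for a total of $O(tm)$ per computation; and whenever the proof repeatedly grows a linkage, the successive flows are nested, so the augmentations telescope rather than restart. \emph{Flatness test}, in a cheap and an expensive form: the cheap form, run in every iteration, merely searches for a $C$-cross around the current outer cycle — a bounded disjoint-paths instance, hence $O(m)$ — and by Theorem~\ref{thm:crossreduct} its absence already certifies flatness structurally, while its presence feeds the apex/minor step; the expensive form, needed only to exhibit the planar drawing demanded by the definition of flat wall, actually computes a $C$-reduction of the relevant side $G[Y]$ by repeatedly locating a separation of order at most three that cuts a piece off $V(C)$ (again a bounded flow) and collapsing it, $O(n)$ times for $O(mn)$ total, and then checks, by a standard planarity test (in $O(m+n)$ time, or in $O(mn)$ time by a more elementary one), that the result is planar with $C$ on a face; this expensive form is invoked only $O(t^2)$ times over the whole run, contributing $O(t^2mn)$. \emph{Model extraction}: when the proof produces a $K_t$ minor, its branch sets are assembled directly from paths already computed, in time linear in their total size. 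Each primitive is constructive, so the algorithm outputs the required certificate — the model, or the separation $(X,Y)$ together with the planar drawing of the $(X\cap Y)$-reduction of $G[Y]$ — not merely a decision.

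The main obstacle is the bookkeeping that keeps the repeated linkage and $C$-reduction computations inside the stated budget: a careless implementation would rerun a maximum flow, or rebuild a $C$-reduction, once per vertex or once per loop iteration and balloon to $t^{25}mn$ or worse. The remedy is to carry the monotonicity of the structural proof into the algorithm — each apex deletion only enlarges the set of already-used vertices, and within a single flatness test the flow value, respectively the advancing BFS frontier, never decreases — so that the work across an entire phase telescopes to a single $O(mn)$ computation, with only $O(t^2)$ expensive phases and $O(t^{24})$ individual BFS steps overall. A secondary but essential technicality is that the wall, its vertical, horizontal and outer paths, the current subwall, and the separation under construction must all be maintained as explicit subgraphs, so that ``restrict to a subwall'', ``reroute a path'', and ``collapse a $3$-separation'' are genuine linear-time local edits rather than fresh global searches. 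Once that representation is in place, correctness is inherited verbatim from the proof of Theorem~\ref{thm:main}, and only the running-time analysis is new.
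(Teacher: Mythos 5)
Your high-level framing—make the existence proof of Theorem~\ref{thm:main} constructive and account for two running-time contributions, one of order $t^{24}m$ and one of order $t^{2}mn$—is the right ambition, but the mechanism you propose does not match the paper's algorithm and, more importantly, would not meet the stated time bound.

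The central mischaracterization is the ``apex-at-a-time'' loop. You describe the algorithm as repeatedly finding a vertex richly connected to the wall, moving it into $A$, shrinking the wall, and re-testing flatness, with the loop running $O(t^{24})$ times. The proof of Theorem~\ref{thm:main} has no such loop. The apex set $A$ (together with the near-obstacle set $Z$) is produced in a single pass by the Erd\H os--P\'osa style path-augmentation routine of Lemmas~\ref{lem:spreadpaths} and~\ref{lem:diversepaths}; its algorithmic counterparts (Lemmas~\ref{alg:spreadpaths}, \ref{alg:diversepaths}, \ref{alg:rtmesh}) spend their $O(t^{24})$ iterations augmenting a collection of $M$-paths $P_1,\dots,P_s$ and auxiliary paths $Q_1,\dots,Q_p$, with $A$ emerging as the side product $\{a_1,\dots,a_p\}$ only at the end. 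No flatness test occurs inside this phase, and the wall is not shrunk there. Only after $A$ and $Z$ are fixed does the algorithm carve out a strip of the wall disjoint from $A\cup Z$, place $t(t-1)$ pairwise far-apart subwalls $W_1,\dots,W_{t(t-1)}$ inside it, and for each one run a $C_i$-cross test on the associated graph $H_i$: if every $H_i$ has a cross, the $t(t-1)$ crosses assemble into an $H^{1}_{t(t-1)}$ minor and hence a $K_t$ minor grasped by $W$; if some $H_i$ has no cross, Lemma~\ref{alg:flatwalllike} converts the returned $C_i$-reduction sequence into the separation certifying flatness. So the correct picture is one bulk path-augmentation pass, then $t(t-1)$ independent cross tests, not $O(t^{24})$ shrink-and-retest rounds.

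Your running-time accounting is also off in a way that would break the bound. You assert that a $C$-cross search is ``a bounded disjoint-paths instance, hence $O(m)$.'' It is not: deciding the existence of a $C$-cross is the full TWO DISJOINT PATHS problem (with endpoints quantified over the cycle), and the algorithm the paper relies on is Shiloach's, which runs in $O(nm)$ time (Theorem~\ref{alg:crossreduct}). There is no known linear-time procedure, and the paper explicitly notes this as the bottleneck. Consequently, your ``cheap'' $O(m)$ flatness test does not exist; if you ran the actual $C$-cross test inside an $O(t^{24})$-iteration loop you would get $O(t^{24}mn)$, far exceeding the budget. The paper's $O(t^{2}mn)$ term arises precisely because Shiloach's algorithm is invoked only $t(t-1)$ times, once per candidate subwall $W_i$, and the separation is then extracted in $O(m)$ by Lemma~\ref{alg:flatwalllike}. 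Your split into a cheap cross test and a separate expensive planar-drawing phase is also not how the paper works: Shiloach's algorithm returns both the reduction sequence and the planar drawing in one call.

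Finally, the reason the constant grows from $40t^2$ to $60t^2$ is specific, not ``morally a constant factor lost in bookkeeping.'' Lemma~\ref{alg:spreadpaths} deliberately returns a weaker distance guarantee ($d(x,y)\le 2l-2$ rather than $d(x,y)<l$) so that each iteration can be implemented in $O(m)$; this propagates through Lemma~\ref{alg:rtmesh} (the bound becomes $20t(t-1)$ instead of $10t(t-1)$), which in turn forces the strips and the subwalls $W_i$ to be proportionally larger. Your proposal does not identify this trade-off, and the telescoping/monotonicity machinery you invent to keep linkage computations cheap is not needed in the paper's design, because there is no repeated flatness loop to telescope.
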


In the second alternative the algorithm also returns a separation $(A,B)$
as in the definition of flat wall, and a certificate that the
separation is as desired.
The details are in the version stated as Theorem~\ref{alg:wst}.

%%%%%%%%%%%%%%%%%%%%%%%%%%%%%%%%%%%%%%%%
\subsection{The Excluded Clique Minor Theorem}

%As the name suggests, 
Theorem~\ref{thm:main} is a step toward a more
comprehensive excluded minor theorem of 
Robertson and Seymour~\cite{RS16}.

\begin{Theorem}
\label{thm:gm16}
For every finite graph $H$ there exists an integer $k$ such that
every graph with no $H$ minor can be obtained by repeated clique-sums,
starting from graphs that $k$-near embed in a surface in which $H$
cannot be embedded.
\end{Theorem}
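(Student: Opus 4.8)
The plan is to bootstrap from the Flat Wall Theorem (Theorem~\ref{thm:main}) to a full structural decomposition, organising the argument around the tools developed above in the style of the Graph Minors series. Fix $H$, set $h=|V(H)|$, and choose $t$ large enough that $H$ is a minor of $K_t$ (say $t=h$). Let $G$ have no $H$ minor. If $G$ has small tree-width it is already a repeated clique-sum of graphs on boundedly many vertices, each of which $0$-near embeds in the sphere, and we are done; so by Theorem~\ref{thm:grid} we may assume $G$ has large tree-width and hence contains a large wall $W$. Applying Theorem~\ref{thm:main} with $r$ chosen enormous, the first alternative would yield a $K_t$ minor and hence an $H$ minor, a contradiction; so we obtain a set $A$ with $|A|\le 12288t^{24}$ and a large flat $r$-subwall $W'$ of $W$ in $G-A$, witnessed by a separation $(X,Y)$ and a near-planar drawing of $G[Y]$ in a disk.

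The next step is to analyse $G$ \emph{near} this flat wall and to make the flat region as large as possible. First bound the number of further ``apex'' vertices: if $G-A$ contained $t$ vertices each adjacent to vertices of $W'$ spread across $t$ horizontal and $t$ vertical paths, one could route disjoint connections through the flat wall to build a model of $K_t$ (using Theorem~\ref{thm:crossreduct} to control what the disk drawing permits), contradicting the absence of an $H$ minor; hence all but boundedly many vertices of $G-A$ attach to $W'$ only locally. Absorbing the bounded exceptional set into $A$, we may grow $(X,Y)$ so that $Y$ is maximal: $G[Y]$ becomes an almost planar region, and the places where planarity fails are confined to a bounded number of small ``vortices'' attached along faces of the drawing. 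Each vortex has bounded depth (path-width of its society), since a deep vortex would contain a $C$-cross relative to the boundary cycle which, combined with the wall, again produces $K_t$ via Theorem~\ref{thm:crossreduct}.

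The hard part is the global step: showing that the flat regions arising around different large walls of $G$ fit together along a single surface of bounded genus. The genus bound itself is cheap---we only need a surface $\Sigma$ in which $H$ cannot be embedded, and every surface of genus exceeding the genus of $H$ qualifies, so it suffices to forbid genus larger than a constant depending only on $H$. The real content is that, after deleting the bounded apex set $A$ and cutting at small separations, what remains can actually be drawn on such a $\Sigma$ with only the bounded family of vortices: two disjoint handle-creating crossings, or a single crossing too far from the boundary of a flat region, would combine with the abundant wall structure to force a $K_t$ minor, by repeated use of the cross-versus-flatness dichotomy of Theorem~\ref{thm:crossreduct} together with a counting argument on the wall's horizontal and vertical paths. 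This is where the tangle, society and ``no large leap'' machinery of Robertson and Seymour enters, and reconstructing it self-containedly from the tools above---locating the right tangle, showing the drawing it induces is essentially forced, and merging the local drawings---is the principal obstacle; it is precisely what occupies the bulk of Graph Minors IX--XVI.

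Finally one assembles the pieces. Small separations partition $V(G)$ into the large near-embeddable chunk and a collection of smaller graphs hanging off cutsets of bounded size; adding the clique edges on each cutset and recursing on the smaller pieces---each still $H$-minor-free, and either of strictly smaller order or of bounded tree-width---expresses $G$ as a repeated clique-sum of graphs that $k$-near embed in $\Sigma$, where $k$ is the bound on $|A|$ plus the total vortex depth. Taking $k$ to be the resulting function of $t=h$ completes the proof. I expect the surface-assembly step---controlling how flat walls link across $G$ and ruling out the configurations that force a $K_t$ minor---to be by far the most delicate, with the clique-sum bookkeeping at the end routine by comparison.
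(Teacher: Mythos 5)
You should first note that the paper contains no proof of Theorem~\ref{thm:gm16} at all: it is quoted from Robertson and Seymour~\cite{RS16} for context, the authors explicitly omit even the definition of a $k$-near embedding, and they defer their own claimed simpler proof to a future paper. So your sketch has to stand on its own, and as written it is not a proof. You name the right ingredients (flat wall, apex set, vortices, tangles, societies), but the decisive steps are only gestured at: bounding the depth of the vortices, showing that the local disk drawings arising around different walls cohere into a single embedding in one surface, and the tangle/society machinery are exactly the content of Graph Minors IX--XVI, and you concede yourself that reconstructing them ``is the principal obstacle.'' Theorem~\ref{thm:crossreduct} cannot substitute for that machinery: it concerns a single cycle $C$ in a graph with no $C$-cross, and it gives no control over crossings far from a given flat region, over how several flat regions interact, or over how handles are forced or excluded globally.

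There is also a concrete error in the one place where you claim a step is cheap. You write that ``every surface of genus exceeding the genus of $H$ qualifies'' as a surface in which $H$ cannot be embedded. This is backwards: if $H$ embeds in some surface it embeds in every surface of larger genus, so the surfaces in which $H$ cannot be embedded are precisely those of Euler genus \emph{smaller} than that of $H$. The theorem therefore requires a near-embedding into a surface of strictly smaller genus than $H$'s, and showing that the genus can be driven below that of $H$ (trading each extra handle or crosscap against progress toward a $K_t$ minor) is a substantive part of Robertson and Seymour's argument, not a triviality. Finally, the closing recursion needs more care than ``recurse on the smaller pieces'': without organizing the decomposition relative to a tangle of high order, there is no guarantee that the torsos you peel off are themselves near-embeddable or that the process terminates in the claimed form, which is again precisely the machinery you have deferred.
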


Since we do not need Theorem~\ref{thm:gm16}, 
let us omit the precise definition of $k$-near embedding.
Instead, let us describe it informally.
A graph $G$ can be $k$-near embedded in a surface $\Sigma$ if there
exists a set $A\subseteq V(G)$ of size at most $k$ such that $G-A$
can be almost drawn in $\Sigma$, except for at most $k$ areas of
non-planarity, where crossings are permitted, but the graph is
restricted in a different way.
Here almost (similarly as in the abstract) means that we are not drawing
the graph $G$ itself, but some $C$-reduction instead, where
now $C$ is a large wall in $G$.
We refer to~\cite{RS16} for a precise statement.

We believe that we have found a much simpler proof of Theorem~\ref{thm:gm16}
with a significantly improved bound on $k$.
We will report on it soon.

The paper is organized as follows.
In the next three sections we prove auxiliary lemmas,
and in Section~\ref{sec:proof} we prove Theorem~\ref{thm:main}.
In Section~\ref{sec:fewapex} we prove Theorems~\ref{thm:mainvar}
\rt{and~\ref{thm:mainvar3}.}
In Section~\ref{sec:algo} we convert the proof of Theorem~\ref{thm:main}
to a polynomial-time
algorithm to construct either a $K_t$ minor or a flat wall.
In order to keep the paper self-contained 
we give a proof of Theorem~\ref{thm:crossreduct}
in the Appendix. 

%%%%%%%%%%%%%%%%%%%%%%%%%%%%%%%%%%%%%%%%
\section{Disjoint $M$-paths with distance constraints}
\label{sec:mpaths}
%
%{\bf CHANGE $X$-path to $H$-path, in the proofs replace $X'$ by $W=X-X'$}

Let $G$ be a graph, and let $M$ be a subgraph of $G$. 
By an {\em $M$-path} we mean
a path in $G$ with at least one edge, both ends in $V(M)$ and otherwise
disjoint from $M$.
The objective of this section is to study $M$-paths that are ``long"
in the sense that their ends are at least some specified distance
apart according to a metric on $V(M)$.
We prove an Erd\H{o}s-P\'osa-type result that says that either there
are many long $M$-paths, or all long $M$-paths can be destroyed by
deleting a restricted set of vertices.
In fact, we prove two closely related results along the same lines.
It turns out that for these lemmas the distance need not be given by 
a metric---all that is needed is the knowledge of which pairs of
vertices are far apart.
We capture that using the relation $R$  below.

\begin{Definition}
Let $G$ be a graph, let $M$ be a subgraph of $G$, and let $R$ be a reflexive
and symmetric relation on $V(M)$.
We say that pairwise disjoint $M$-paths $P_1, \dots, P_k$ 
are \emph{$R$-semi-dispersed\?{$R$-semi-dispersed}} 
if it is possible to label the 
ends of $P_i$ as $x_i$ and $y_i$ such that
 $(x_i, y_i)\not\in R$ and $(x_i, x_j) \not\in R$
for all distinct indices $i,j\in\{1,2,\ldots,k\}$.
Thus no restriction is placed on the relative position of the vertices
$y_1,y_2,\ldots,y_k$.
For $x\in V(M)$ we define $R(x)$, the {\em ball around $x$}, as
the set of all $y\in V(M)$ such that $(x,y)\in R$. 
%\begin{enumerate}
%\item[(1)] $(x_i, y_i)\in R$, and
%\item[(2)] $(x_i, x_j) \in R$.
%\end{enumerate}
%The next lemma characterizes when we can find many disjoint $t$-spread paths for a given value $t$.  
%In order to state the lemma, we need to define balls with respect to the symmetric function $d$.  Given a vertex $x \in X$, and $t \ge 0$, we define the \emph{ball around $x$ of radius $t$} \?{$B_d(x,t)$}, denoted $B_d(x, t)$ or $B(x, t)$ when there can be no confusion, to be $\{y \in X : d(x, y) \le t\} \cup \{x\}$.
\end{Definition}

Let us recall that a collection of paths $\cal P$ are
\emph{internally disjoint}\?{internally disjoint}
if every vertex that belongs to two distinct members of $\cal P$ is
an end of both.
%%%%%%%%%%%%%%%%%%%%%%%%%%%%

\begin{Lemma}\label{lem:spreadpaths}
Let $G$ be a graph, let $M$ be a subgraph of $G$, let $R$ be a
reflexive and symmetric relation on $V(M)$, and let $k \ge 0$ be an
integer.  Then either there exist
pairwise disjoint $M$-paths $P_1, \dots, P_k$ which are $R$-semi-dispersed, 
or, alternatively, the following holds.  
There exist sets $A \subseteq V(G)$ and $Z \subseteq V(M)$ with $|A| \le k-1$
and $|Z| \le 3k-3$ such that every $M$-path $P$ in
$G-A$ with ends $x$ and $y$ either satisfies $(x, y)\in R$ or both
$x, y \in \bigcup_{z \in Z} R(z)$.   
\end{Lemma}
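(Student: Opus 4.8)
The plan is to proceed by induction on $k$, using a Menger-type argument to either find one $R$-semi-dispersed $M$-path that is "clearly separated" from a large part of $M$, or to build the set $Z$ directly. The base case $k=0$ is vacuous: take $A=Z=\emptyset$, and there are no $M$-paths to consider (or rather, we are asked for an empty family, which trivially exists). For the inductive step, fix $k\ge1$ and suppose the lemma holds for $k-1$.

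First I would ask whether there is a single $M$-path $P$ in $G$ with ends $x,y$ such that $(x,y)\notin R$; if not, then $A=Z=\emptyset$ works, since every $M$-path $P$ with ends $x,y$ satisfies $(x,y)\in R$. So assume such a $P$ exists, and among all such paths choose one; label its ends $x_1=x$, $y_1=y$. The idea is to try to extend $\{P\}$ to a family of $k$ $R$-semi-dispersed paths by applying induction to a suitably reduced instance. To do this, consider the relation $R'$ obtained from $R$ by additionally forbidding, for each vertex $v$, the pair $(v,x_1)$ whenever $v\notin R(x_1)$ already — more precisely, I want the remaining $k-1$ paths to have one labelled end outside $R(x_1)$ as well, so I enlarge $R$ to $R'$ by setting $R'(z)=R(z)$ for $z\ne x_1$ and $R'(x_1)=V(M)$... but that would force the first end of every remaining path to avoid being "close to" $x_1$, which is exactly the $R$-semi-dispersed condition $(x_i,x_1)\notin R$. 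The cleanest way: apply the induction hypothesis to the graph $G$, the subgraph $M$, the same relation $R$, and the parameter $k-1$, but restricted to $M$-paths avoiding $x_1$ and whose first end lies outside $R(x_1)$. This requires recasting the statement so the induction carries the "avoid $R(x_1)$" constraint; the honest route is to delete nothing but instead observe that an $R$-semi-dispersed family for $k-1$ inside $G - N_G[?]$...

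The main obstacle, and where I expect the real work to lie, is bookkeeping the sizes $|A|\le k-1$ and $|Z|\le 3k-3$ correctly through the induction. The natural mechanism is: if the induction (for $k-1$) returns $k-1$ semi-dispersed paths disjoint from $P$ and compatible with $P$, we are done with a family of size $k$; otherwise it returns $A'$ with $|A'|\le k-2$ and $Z'$ with $|Z'|\le 3k-6$ such that every relevant $M$-path in $G-A'$ is killed. We then need to "pay" one more vertex into $A$ or three more into $Z$ to also handle the $M$-paths that the reduced instance was allowed to ignore — namely those touching $x_1$ or using $x_1$'s ball. Here a separation/Menger argument enters: if there is no set of $3$ vertices in $V(M)$ whose balls cover the ends of all "long" $M$-paths near $x_1$, then by Menger there are $4$... no: the factor $3$ strongly suggests that the right move is a $3$-cut argument, so that one extra element of $A$ corresponds to a Menger obstruction of order $\ge 1$ witnessing many disjoint paths, while the $3k-3$ counts at most $3$ vertices of $M$ contributed per round — one "center" vertex plus up to two more from a small separator of $M$ in $G$. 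Concretely, I would look at the auxiliary graph and ask: does there exist $v\in V(G)-A'$ such that $G-A'-v$ has no $M$-path with ends $x,y$, $(x,y)\notin R$, $x\notin R(x_1)\cup\bigcup_{z\in Z'}R(z)$? If yes, add $v$ to $A$. If no, then for every such $v$ a bad path survives, and I would use this to extract either a $k$-th semi-dispersed path (contradiction with the "otherwise" case) or to find three vertices of $M$ whose balls, together with $R(x_1)$ and $\bigcup_{z\in Z'}R(z)$, cover all bad ends, which we add to $Z$.

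To make the $3$-vertex step precise I would invoke Menger's theorem in the graph obtained from $G-A'$ by adding a new vertex $s$ adjacent to all of $V(M)\setminus(R(x_1)\cup\bigcup_{z\in Z'}R(z))$ and a new vertex $t$ adjacent to the rest of $V(M)$ appropriately, so that an $M$-path with the forbidden-ends property becomes an $s$--$t$ path; if the max number of disjoint such paths is $\ge 1$ we extend the family, and if the min cut has size $c$, that cut meets $V(M)$ in at most $c$ vertices and meets $V(G)\setminus V(M)$ in at most $c$ vertices, which is where the constants $1$ (into $A$) and $3$ (into $Z$) must be balanced — I expect to need a slightly more clever cut, perhaps applying Menger to count $(k-1)$ disjoint paths all at once rather than one at a time, mirroring the standard proof of the Erd\H os--P\'osa bound for $M$-paths. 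I would therefore reorganize the induction to peel off \emph{all} $k$ paths in one Menger application if possible, and only fall back to adding one vertex to $A$ and three to $Z$ when the flow is blocked, tracking that a blocking set of order $k-1$ on the "$G$-side" explains $|A|\le k-1$ and a blocking set of order $3(k-1)$ on the "$M$-side" (three per path from a $3$-separation of the drawn part) explains $|Z|\le 3k-3$. Verifying that the recursion closes with exactly these constants, and that the "both $x,y\in\bigcup_{z\in Z}R(z)$" conclusion (rather than just $x\in\cdots$) comes out correctly by symmetry, is the part I would be most careful about.
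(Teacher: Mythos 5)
Your proposal is a plan rather than a proof: you yourself flag that the bookkeeping through the induction, and the exact Menger cut that is supposed to yield ``one vertex into $A$, three into $Z$'' per round, are the parts you have not verified, and those are precisely where the argument fails to close. The difficulty is structural. If you peel off a single long path $P$ with ends $x_1,y_1$ and recurse, the returned pair $(A',Z')$ for the reduced instance controls only paths avoiding whatever you removed; but a long $M$-path in $G-A'$ can still hit $P$ in an internal vertex and escape, and there is no canonical single vertex of $P$ to add to $A$ nor a canonical triple to add to $Z$ that kills all such paths. A na\"ive Menger application does not resolve this because the obstruction you need is not a vertex cut separating two fixed terminal sets: the forbidden configurations involve the \emph{labelled} end $x$ lying outside a union of balls while the other end $y$ is unconstrained, and the family of long paths through $P$ is not the set of $s$--$t$ paths in any fixed auxiliary graph.

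The idea that is genuinely missing is the paper's two-level extremal choice with an \emph{auxiliary} family of paths. One first takes a maximum family $P_1,\dots,P_s$ of $R$-semi-dispersed $M$-paths, and then, subject to that, a maximum family $Q_1,\dots,Q_p$ of pairwise disjoint paths, each $Q_i$ starting at a vertex $a_i\in V(P_i)$, ending at a vertex $w_i\in V(M)$ that lies outside every ball $R(x_j)\cup R(y_j)$, internally disjoint from $M\cup\bigcup P_j$, with the $w_i$ pairwise $R$-far. One then sets $A:=\{a_1,\dots,a_p\}$ and $Z:=\{w_1,\dots,w_p,x_1,y_1,\dots,x_s,y_s\}$. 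Now if a long $M$-path $S$ in $G-A$ has an end outside $\bigcup_{z\in Z}R(z)$, traversing $S$ from that end and looking at the first vertex of $\bigcup V(Q_i)\cup\bigcup V(P_i)$ met yields, in every case, either an augmentation of the $P$-family (contradicting maximality of $s$) or an augmentation of the $Q$-family (contradicting maximality of $p$). The constant $3k-3$ is then not ``three per round'' at all: it is $|Z|=2s+p$ with $p\le s\le k-1$, so $|Z|\le 3s\le 3(k-1)$, and $|A|=p\le k-1$. Your inductive peeling has no analogue of the $Q_i$, and without them there is no way to produce the deletion vertices $a_i$ or the anchor vertices $w_i$; this is the concrete gap, and it is not repaired by a more clever cut of the kind you gesture at.
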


%%%%%%%%%%%%%%%%%%%%%%%%%%%%

\begin{proof}
%Let $G$ be a graph and $X \subseteq V(G)$.  Let $d$ a symmetric non-negative function on $X$.  Let $k$ and $t$ be non-negative integers.  
For the duration of the proof, we will say that an $M$-path $P$ is 
\emph{long} \?{long} if the ends $x$ and $y$ of $P$ satisfy $(x, y)\not\in R$.
Let $P_1, \dots, P_s$ be disjoint $M$-paths with the ends of $P_i$ labeled 
$x_i$ and $y_i$ satisfying the requirements in the definition of 
$R$-semi-dispersed.  
Let $0 \le p \le s$ be an integer, and let $Q_1, \dots, Q_p$ be disjoint 
paths with the ends of $Q_i$ equal to $a_i$ and $w_i$  satisfying the following
for all distinct integers $i,j\in\{1,2,\ldots,p\}$:
\begin{itemize}
\item[(a)] $w_i \in V(M) \setminus \bigcup_{m =1}^s (R(x_m) \cup R(y_m))$,
\item[(b)] $a_i \in V(P_i)$,
\item[(c)] $Q_i$ is internally disjoint from $V(M) \cup \bigcup_{m=1}^s V(P_m)$ and $E(Q_i) \cap E(M) = \emptyset$,
and
\item[(d)] $(w_i, w_j)\not\in R$.
\end{itemize}
We may assume that these paths are chosen so that $s$ is maximum, and,
subject to that, $p$ is maximum.
We may assume  that $s<k$, for otherwise the first
outcome of the lemma holds.
We will show that the sets
$ A:= \{a_1,a_2, \dots, a_p\}$ and 
$Z := \{w_1,w_2, \dots, w_{p}, x_1, y_1, \dots, x_{s}, y_s\}$ 
satisfy the second outcome of the lemma.

%  Note, if $s = k$, the lemma is proven.  We will show that one of the following must hold:
%\begin{enumerate}
%\item there exists a $t$-spread set of $k+1$ paths, or 
%\item there exists a path $Q_{p+1}$ so that $Q_1, \dots, Q_{p+1}$ satisfy a-d, or
%\item establish that set $A$ and $Z = \{w_1, \dots, w_{p}, x_1, y_1, \dots, x_{s}, y_s\} \subseteq X$ satisfy the lemma.
%\end{enumerate}
%This will suffice to complete the proof.  

To that end 
let $W := \bigcup_{i=1}^p R(w_i)
\cup \bigcup_{i=1}^s \left( R(x_i) \cup R(y_i)\right)$.  
We may assume for a contradiction that there exists a long 
$M$-path $S$ in $G-A$ which has an end in $V(M)\setminus W$.  
If $S$ is disjoint from $P_1, \dots, P_s$,
we see that $S, P_1, \dots, P_s$ satisfy the definition of $R$-semi-dispersed,
contrary to the maximality of $s$.
Thus $S$ intersects one of the paths $P_i$, and hence we may
%We conclude that $V(R) \cap \left( \bigcup_{i=1}^p V(Q_i) \cup
%\bigcup_{i=1}^s V(P_i) \right)$ is nonempty.  
let $y$ be the first
vertex of $ \bigcup_{i=1}^p V(Q_i) \cup \bigcup_{i=1}^s V(P_i)$ which we
encounter when traversing the path $S$ beginning at an end in 
$x \in V(M)\setminus W$.

There are now several different cases, depending on where the vertex
$y$ lies. As the first case, assume $y \in V(Q_i)$ for some $1 \le i
\le p$.  It follows that $S \cup Q_i$ contains a long $M$-path, call
it $P'$, which has $x$ as an end and is disjoint from $P_1,
\dots, P_s$.  Then the paths $P', P_1, \dots, P_s$ are $R$-semi-dispersed, 
contrary to the maximality of $s$.  As the next case, assume
$y \in V(P_i)$ for some $1 \le i \le p$.  Then $S \cup Q_i \cup P_i$
contains two disjoint long $M$-paths, call them $P'$ and $P''$, such
that $P'$ has $x$ as an end and $P''$ has $w_i$ as an end.
Note that here we are using the property that $y \neq a_i$ to ensure
that $P'$ and $P''$ can be chosen disjoint.  Then the paths $P_1, \dots,
P_{i-1}, P_{i+1}, \dots, P_s, P', P''$ are $R$-semi-dispersed, again
contrary to the maximality of $s$.  As the final case,
consider when $y \in V(P_i)$ for some index $i$ with $p < i \le s$.  
We may assume, by swapping the paths $P_{p+1}$ and $P_i$, that $i=p+1$.
Then the paths $Q_1, \dots,Q_p,S$ contradict the maximality of $p$.

This completes the analysis of the possible cases, proving the lemma.  
\end{proof}

We also need the following closely related lemma.
Let $G$ be a graph, let $M$ be a subgraph of $G$, and let $R$ be a reflexive
and symmetric relation on $V(M)$.
We say that pairwise disjoint $M$-paths $P_1,P_2,\ldots,P_k$ are
\emph{$R$-dispersed}\?{$R$-dispersed} if $(x,y)\not\in R$
for every two distinct vertices $x,y$ such that each is an end of one
of the paths $P_i$.

\begin{Lemma}\label{lem:diversepaths}
Let $G$ be a graph, let $M$ be a subgraph of $G$,  let $R$ be a
reflexive and symmetric relation on $V(M)$, and let $k \ge 0$ be an
integer.  Then either there exist
pairwise disjoint $R$-dispersed $M$-paths $P_1, \dots, P_k$,
% such that if
%$x,y$ are ends of the same path $P_i$ or ends of two distinct paths
%$P_i$ and $P_j$, then $(x,y)\not\in R$,
or, alternatively, the following holds.
There exist sets $A \subseteq V(G)$ and $Z \subseteq V(M)$ with $|A| \le k-1$
and $|Z| \le 3k-3$ such that for every $M$-path $P$ in
$G-A$ its  ends can be denoted by $x$ and $y$ such that either
$(x, y)\in R$ or 
$x \in \bigcup_{z \in Z} R(z)$.
\end{Lemma}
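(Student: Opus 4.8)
The plan is to follow the proof of Lemma~\ref{lem:spreadpaths} almost line for line, with one structural adjustment reflecting the fact that in an $R$-dispersed family \emph{every} pair of ends must be non-$R$-related, whereas for $R$-semi-dispersed families only the distinguished ends were constrained.

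First I would choose disjoint $R$-dispersed $M$-paths $P_1,\dots,P_s$ with ends $x_i,y_i$, and then disjoint paths $Q_1,\dots,Q_p$ (with $0\le p\le s$) having ends $a_i\in V(P_i)$ and $w_i\in V(M)$ subject to exactly conditions (a)--(d) from the proof of Lemma~\ref{lem:spreadpaths}, choosing $s$ maximum and, subject to that, $p$ maximum. If $s\ge k$ the first outcome holds, so assume $s\le k-1$. Put $A:=\{a_1,\dots,a_p\}$ and $Z:=\{w_1,\dots,w_p,x_1,y_1,\dots,x_s,y_s\}$; since $p\le s\le k-1$ this gives $|A|\le k-1$ and $|Z|\le p+2s\le 3k-3$. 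Writing $W:=\bigcup_{z\in Z}R(z)$, it remains to prove that $G-A$ contains no long $M$-path with \emph{both} ends outside $W$, which is exactly the negation of the second outcome.

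So suppose $S$ is such a path, with ends $x,x'\notin W$. If $S$ is disjoint from $P_1,\dots,P_s$, then since $(x,x')\notin R$ and both $x$ and $x'$ are non-$R$-related to every $x_i$ and $y_i$ (as they lie outside $W$), the paths $S,P_1,\dots,P_s$ form an $R$-dispersed family, contradicting maximality of $s$. Hence $S$ meets some $P_i$; let $y$ be the first vertex of $\bigcup_i V(Q_i)\cup\bigcup_i V(P_i)$ encountered when traversing $S$ from $x$, and put $S':=S[x,y]$. Since $x,x'\notin W$, neither end of $S$ lies on $\bigcup_i V(P_i)\cup\bigcup_i V(Q_i)$, so $y$ is an interior vertex of $S$; in particular $y\notin V(M)$, and since $S\subseteq G-A$ also $y\notin A$. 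I would then run the same three cases as in the proof of Lemma~\ref{lem:spreadpaths}, according to whether $y\in V(Q_i)$, or $y\in V(P_i)$ with $i\le p$, or $y\in V(P_i)$ with $i>p$; here $y\notin A$ guarantees $y\ne a_i$ in the first two cases, which is what makes the rerouting clean. In each case one combines $S'$ with suitable portions of $Q_i$ and/or $P_i$ to produce either an $R$-dispersed family of $s+1$ $M$-paths or a family of $p+1$ paths satisfying (a)--(d), a contradiction in either case. The only point at which the argument departs from the proof of Lemma~\ref{lem:spreadpaths} is that each newly built $M$-path must have \emph{both} of its ends controlled, not merely its distinguished end; this is arranged by always routing the new path so that its end other than $x$ is one of the vertices $w_i$, and then invoking $w_i\notin\bigcup_j(R(x_j)\cup R(y_j))$ from (a) together with $x\notin R(w_i)$, which holds since $x\notin W$. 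Concretely, when $y\in V(P_i)$ with $i\le p$ one splits $P_i$ at $a_i$, attaching $S'$ to the piece containing $y$ and $Q_i$ to the other piece, obtaining disjoint long $M$-paths $P',P''$ whose ends, together with $\{x_j,y_j:j\ne i\}$, form the $R$-dispersed set $\{x_j,y_j:j\in[s]\}\cup\{x,w_i\}$; and when $y\in V(P_i)$ with $i>p$, after relabelling so that $i=p+1$ the path $S'$ is itself an admissible new $Q$-path with $w_{p+1}:=x$, condition (d) for the enlarged family holding because $x\notin R(w_j)$ for all $j\le p$.

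I expect the main difficulty to be purely bookkeeping: one must verify, in each case and its degenerate sub-cases, that the paths assembled are pairwise disjoint and that no two of their ends are $R$-related. No idea beyond the ``route out through a vertex $w_i$'' device is needed; the combinatorial core is the same double-extremal argument that drives the proof of Lemma~\ref{lem:spreadpaths}.
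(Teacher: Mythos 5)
Your proof is correct and follows essentially the same route as the paper: the paper's own proof of this lemma is a one-paragraph reduction to the argument of Lemma~\ref{lem:spreadpaths}, the only stated differences being that the $P_i$ are chosen $R$-dispersed and that $S$ is taken with both ends in $V(M)\setminus A\setminus W$, after which ``we then derive a contradiction as in the proof of Lemma~\ref{lem:spreadpaths}.'' You have simply unpacked that contradiction in full, correctly identifying that the key extra checks (in Cases 1--3) reduce to $w_i\notin\bigcup_j(R(x_j)\cup R(y_j))$ from (a) together with $x\notin W$, and your observation that $y\notin A$ guarantees $y\ne a_i$ is exactly the point the paper implicitly relies on.
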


\begin{proof}
This follows by the same argument as Lemma~\ref{lem:spreadpaths}, 
with the following differences.
Instead of choosing the paths $P_i$ to be $R$-semi-dispersed we choose them
to be $R$-dispersed.
%so that $(x,y)\not\in R$ whenever $x,y$ are the ends of the same path 
%$P_i$ or ends of two distinct paths $P_i$ and $P_j$.
We choose the path $S$ to be an $(M- A-W)$-path in $G-A$;
if such a choice is not possible, then the lemma holds.
We then derive a contradiction as in the proof of Lemma~\ref{lem:spreadpaths}.
\end{proof}

%%%%%%%%%%%%%%%%%%%%%%%%%%%%%%%%%%%%%%%%%%%%%%
\section{Meshes and clique minors}
\label{sec:mesh}

In this section we introduce the notion of a mesh---a common generalization
of walls and grids. It will allow us to reduce problems about walls
to problems about grids, which is useful, because grids are easier
to work with.
We also introduce a distance function on a mesh.

\begin{Definition}
Let $r, s \ge 2$ be positive integers, let $M$ be a graph, and let
$P_1,P_2,\ldots,P_r$, $Q_1,Q_2,\ldots,Q_s$ be paths in $M$ such that
the following conditions hold for all $i=1,2,\ldots,r$ and $j=1,2,\ldots,s$:
\begin{enumerate}
\item[(1)] $P_1,P_2, \dots, P_r$ are pairwise vertex disjoint, 
$Q_1,Q_2, \dots, Q_s$ are pairwise vertex disjoint, and
$M=P_1\cup P_2\cup \cdots\cup P_r\cup Q_1\cup Q_2\cup \cdots\cup Q_s$,
\item[(2)] $P_i\cap Q_j$ is a path, and if $i\in\{1,s\}$ or $j\in\{1,r\}$ or
both, then $P_i\cap Q_j$ has exactly one vertex,
\item[(3)] $P_i$ has one end in $Q_1$ and the other end in $Q_s$, and
when traversing $P_i$ the paths  $Q_1,Q_2,\ldots,Q_s$ are encountered
in the order listed,
\item[(4)] $Q_j$ has one end in $P_1$ and the other end in $P_r$,
and when traversing $Q_j$ the paths $P_1,P_2,\ldots,P_r$ are encountered
in the order listed.
\end{enumerate}
In those circumstances we say that $M$ is an 
\emph{$r \times s$ mesh}\?{$r \times s$ mesh}.
%The paths $P_1, \dots, P_r, Q_1, \dots, Q_s$  are the \emph{wires} \?{wires} 
%of the mesh. When we need to distinguish between paths from the different
%families, 
We will refer to $P_1,P_2,\ldots,P_r$ as {\em horizontal paths}
and to $Q_1,Q_2,\ldots,Q_s$ as {\em vertical paths}.
Thus every $r\times s$ grid is an $r\times s$ mesh, and
every planar graph obtained from an $r\times s$ grid by subdividing edges and
splitting vertices is an $r\times s$ mesh.
In particular, every $r$-wall is an $r\times r$-mesh.

We wish to define a distance function on a mesh, but we first do it for
a grid. Let $H$ be the $r\times s$ grid, so that $V(H)=[r]\times[s]$.
We regard $H$ as a plane graph, using the obvious straight-line drawing.
For $v_1=(x_1,y_1)$ and $v_2=(x_2,y_2)$ we define $d(v_1,v_2):=k-1$,
where $k$ is the least integer such that every curve in the plane joining
$v_1$ and $v_2$ intersects $H$ at least $k$ times. (We may clearly 
restrict ourselves to curves intersecting $H$ only in vertices.)
This distance can be calculated from the knowledge of the coordinates.
Indeed, it is easy to check that $d(v_1,v_2)$ is equal to the minimum
of $\max\{|x_1-x_2|,|y_1-y_2|\}$ and
$\min\{x_1,y_1,r+1-x_1,s+1-y_1\}+\min\{x_2,y_2,r+1-x_2,s+1-y_2\}-1$.

We now extend this definition to meshes as follows.
Let $M$ be a mesh with horizontal paths $P_1,P_2,\ldots,P_r$ and
vertical paths $Q_1,Q_2,\ldots,Q_s$ as above.
Then $M$ has an $H$ minor, where $H$ is the $r\times s$ grid, as in
the previous paragraph. 
Thus there exists a surjective mapping $f:V(M)\to V(H)$ such that 
$f^{-1}(u)$ is a branch-set of the $H$ minor for every $u\in V(H)$.
Furthermore, if $u=(i,j)$, then the set $f^{-1}(u)$ includes
$V(P_i)\cap V(Q_j)$.
%induces a connected graph in $M$ for every $u\in V(H)$.
If $d_H$ denotes the distance function on $H$ from the previous paragraph,
then we define $d(u,v):=d_H(f(u),f(v))$.
We say that $d$ is a {\em distance function on $M$}.
The function $d$ is a pseudometric; that is, it is symmetric and satisfies
the triangle inequality, but there may be distinct vertices
$u,v$ with $d(u,v)=0$. The function $d$ is not unique; it depends on the
choice of the function $f$.
\end{Definition}

\begin{Definition}
%We extend the definition of control to meshes as follows.
The definition of grasping extends to meshes almost verbatim, as follows.
\yy{%
Let $M$ be  an $r\times s$-mesh in a graph $G$ with 
 horizontal paths $P_1,P_2,\ldots,P_r$ and vertical paths $Q_1,Q_2,\ldots,Q_s$.
We say that a model of a $K_t$ minor in $G$ is \emph{grasped}\?{grasped}
by  $M$ if for every branch-set  $X$ of the model
there exist distinct indices $i_1,i_2,\ldots,i_t\in\{1,2,\ldots,r\}$ and distinct 
indices $j_1,j_2,\ldots,j_t\in\{1,2,\ldots,s\}$ such that $V(P_{i_l}\cap Q_{j_l})\subseteq X$
for all $l=1,2,\ldots,t$.
}%
%We say that a model of a $K_t$ minor in $G$ is \emph{grasped}\?{grasped}
%by an $r\times s$-mesh $M$ if $t\le\min\{r,s\}$ and
%every branch-set of the model intersects at least
%$t$ horizontal or at least $t$ vertical paths of the mesh.
%Let $G$ be a graph, let $M$ be an $r\times s$ mesh in $G$, and let 
%$X_1,X_2,\ldots,X_t$ be a model of a $K_t$ minor in $G$.
%$H$ be a
%$K_t$ minor in $G$ with branch-sets $X_1,X_2,\ldots,X_t$.
%We say that $M$ 
%\emph{controls the $K_t$ minor $H$}\?{controls the $K_t$ minor $H$}
%if $r,s\ge t$ and there is no separation $(A,B)$ of order at most
%$t-1$ such that $A-B$ includes a wire of $M$ and $X_i\subseteq B-A$
%for some $i=1,2,\ldots,t$.
\end{Definition}

Let $G$ be a graph and $M$ a mesh in $G$.  We first extend the definition of subwall to meshes in the natural way.

\begin{Definition} Let the horizontal and vertical paths of $M$ be $\zH$ and $\zV$, respectively. A mesh $M'$ with horizontal and vertical paths $\zH'$ and $\zV'$ is a \emph{submesh} of $M$ if every element of $\zH'$ is a subpath of a distinct element of $\zH$ and similarly, every element of $\zV'$ is a subpath of a distinct element of $\zV$.
\end{Definition}

\begin{Definition}  Let $G'$ be a minor of $G$ and $M'$ a mesh in $G'$.  We say that $M'$ is \emph{compatible} with
\zz{a mesh} $M$ \zz{in $G$} if there exist a subset $Z \subseteq E(G)$ and a submesh $\bar{M}$ of $M$ such that $G'$ is obtained from \rt{a subgraph of} $G$ by contracting $Z$ and $M'$ is obtained from $\bar{M}$ by contracting $Z \cap E(\bar{M})$.
\end{Definition}

\begin{Lemma}
\label{lem:controltrans}
Let $G$ be a graph, let $M$ be a mesh in $G$, let $G'$ be a minor of $G$,
and let $M'$ be a mesh in $G'$ compatible with $M$.
If for some integer $t\ge0$ the graph $M'$ grasps a $K_t$ minor of $G'$, 
then $M$ grasps a $K_t$ minor of $G$.
\end{Lemma}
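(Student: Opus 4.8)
The plan is to unwind the definition of compatibility and then track a model of $K_t$ through the two operations relating $G$ to $G'$, namely taking a subgraph and contracting the edge set $Z$. By definition of compatible there are $Z\subseteq E(G)$ and a submesh $\bar M$ of $M$ such that $G'$ is obtained from a subgraph $G''$ of $G$ by contracting $Z$, while $M'$ is obtained from $\bar M$ by contracting $Z\cap E(\bar M)$; in particular $\bar M\subseteq G''$ and the horizontal (resp.\ vertical) paths of $M'$ are images under the contraction of the horizontal (resp.\ vertical) paths of $\bar M$. Since $\bar M$ is a submesh of $M$, each horizontal path of $\bar M$ is a subpath of a distinct horizontal path of $M$, and likewise for vertical paths; so if a vertex set of $G$ meets $t$ horizontal (resp.\ vertical) paths of $\bar M$, it meets $t$ horizontal (resp.\ vertical) paths of $M$ as well.

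First I would lift the branch-sets. Let $X_1,\dots,X_t$ be the branch-sets of a model of $K_t$ in $G'$ grasped by $M'$. For each vertex $v$ of $G'$ let $B(v)\subseteq V(G'')$ be the branch-set of the contraction $G''\to G'$ corresponding to $v$ (so the sets $B(v)$ partition $V(G'')$, each induces a connected subgraph of $G''$, and $B(v)$ meets $B(w)$'s neighbourhood whenever $vw\in E(G')$). Put $Y_i:=\bigcup_{v\in X_i}B(v)$ for $i=1,\dots,t$. These are pairwise disjoint subsets of $V(G'')\subseteq V(G)$; each $Y_i$ induces a connected subgraph of $G''$ because the $B(v)$ for $v\in X_i$ are connected and glued along edges of $G''$ witnessing the edges of $G'[X_i]$ (using that $X_i$ induces a connected subgraph of $G'$ and that the contraction surjects edges); and for $i\ne j$ there is an edge of $G''$ between $Y_i$ and $Y_j$ since there is an edge of $G'$ between $X_i$ and $X_j$ and every edge of $G'$ is the image of an edge of $G''$. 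Hence $Y_1,\dots,Y_t$ is a model of $K_t$ in $G''$, and a fortiori in $G$.

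Next I would verify that this lifted model is grasped by $M$. Fix $i$, and suppose without loss of generality that $X_i$ meets at least $t$ horizontal paths of $M'$, say it contains vertices $v_1,\dots,v_t$ lying on distinct horizontal paths $H'_1,\dots,H'_t$ of $M'$. Each $H'_\ell$ is the contraction image of a horizontal path $\bar H_\ell$ of $\bar M$, and the $\bar H_\ell$ are distinct (distinct horizontal paths of $M'$ come from distinct horizontal paths of $\bar M$, since $\bar M$'s horizontal paths are vertex-disjoint and map onto $M'$'s). Because $v_\ell$ lies on $H'_\ell$, the branch-set $B(v_\ell)$ contains a vertex of $\bar H_\ell$; that vertex lies in $Y_i$. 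So $Y_i$ meets $t$ distinct horizontal paths of $\bar M$, hence $t$ distinct horizontal paths of $M$ by the submesh remark above. The vertical case is identical. Therefore $Y_1,\dots,Y_t$ is a model of $K_t$ in $G$ grasped by $M$, which is the desired conclusion.

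I do not expect a genuine obstacle here; the one point that needs care is the bookkeeping that the contraction $G''\to G'$ really does surject the edge set onto $E(G')$ and that connectivity of $X_i$ in $G'$ lifts to connectivity of $Y_i$ in $G''$ — i.e.\ checking that ``contract $Z$'' in the definition of compatibility is an honest minor operation so that these standard facts about models apply. If the paper's convention allowed $Z$ to contain edges whose contraction is not reflected by an edge of $G'$ (e.g.\ because parallel or loop edges are discarded), one would simply note that discarding such edges does not affect any branch-set or adjacency used above.
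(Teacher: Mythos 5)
The paper states only ``The proof is clear and we omit it,'' so there is no written argument to compare against; your proof is exactly the straightforward unwinding of the definitions (lift branch-sets through the contraction map, track which horizontal/vertical paths of $\bar M$ and hence of $M$ are hit) that the authors evidently had in mind, and it is correct.
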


The proof is clear and we omit it.

Let $r\ge1$ be an integer, and 
let $H_{2r}$ be the
 $2r\times 2r$-grid with vertex-set $[2r]\times[2r]$,
as usual.
The graph \emph{$H^1_{2r}$}\?{$H^1_{2r}$} is defined as the graph obtained from
$H_{2r}$ by adding all edges with ends $(i,r)$ and $(i+1,r+1)$,
and all edges with ends  $(i,r+1)$ and $(i+1,r)$ for all
$i=1,2,\ldots,2r-1$.
%
%the vertices labeled $v(i, j)$, $1 \le i \le 2r$, $1 \le j \le 2r$ with the natural
%cartesian coordinates.  $H^1_{r}$ is obtained from $H$ by adding edges of the form $v(i, r)v(i+1,
%r+1)$ and $v(i, r+1)v(i+1, r)$ for $1 \le i \le 2r-1$.  
In other
words, $H^1_{2r}$ is constructed from the $2r\times 2r$-grid by
adding a pair of crossing edges in each face of the middle row of
faces.  
We will refer to the grid $H_{2r}$ as the \emph{underlying grid} of $H^1_{2r}$
%The \emph{upper grid of $H^1_{r}$}\?{upper grid} is the 
%$r \times 2r$ grid induced by the set of vertices  
%$\{v(i, j): 1 \le i \le 2r,  r+1 \le j \le 2r\}$.

\begin{Lemma}\label{lem:H1}
Let $t \ge 2$ be an integer.  The graph $H^1_{t(t-1)}$ has a $K_t$  minor
grasped by the underlying grid.
\end{Lemma}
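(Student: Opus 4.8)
The plan is to exhibit an explicit model of $K_t$ in $H^1_{t(t-1)}$ and then verify directly that every branch-set meets at least $t$ horizontal (or at least $t$ vertical) lines of the underlying grid $H_{t(t-1)}$. Write $r = t(t-1)/2$ if $t$ is... no — set $n = t(t-1)$, so the grid is $H_n$ on $[n]\times[n]$, and the crossing edges sit between rows $r := n/2 = t(t-1)/2$ and $r+1$ (note $n$ is always even since $t(t-1)$ is even). The natural idea is to use the crossing edges to ``route past each other'' $t$ paths that would otherwise be forced to cross in the plane, which is exactly the obstruction that makes a planar grid useless for building $K_t$. Concretely, I would partition the columns into $t$ consecutive blocks $B_1,\dots,B_t$, each of width $t-1$, and let branch-set $X_i$ consist of: the entire portion of the grid lying in block $B_i$ in the top half (rows $1$ through $r$), together with a ``staircase'' path in the bottom half that starts at the bottom of block $B_i$, walks across using the crossing edges in the middle row of faces, and descends into a distinct one of the remaining blocks, so that the staircases of different branch-sets are pairwise disjoint but every two consecutive blocks get linked.

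First I would make the staircase routing precise. The middle band of faces carries, in each face between columns $j$ and $j+1$, a pair of crossing edges joining $(j,r)\!-\!(j+1,r+1)$ and $(j,r+1)\!-\!(j+1,r)$. Using one diagonal of each such face, a path entering the band at column $a$ in row $r$ can exit at any column $b$ in row $r+1$, and two such paths with disjoint column-intervals $[a,b]$, $[a',b']$ can be taken vertex-disjoint in the band (each uses only one of the two diagonals in each face it traverses, and their face-sets are disjoint). This lets me realize, inside the middle band together with a bounded number of rows just above and below it, a system of $t$ vertex-disjoint paths that, combined with the top-half blocks, connects $X_i$ to $X_j$ for every pair $i \neq j$: I only need $\binom{t}{2}$ ``links'', and I can schedule them greedily — there is room because each block has width $t-1 \ge t/2$ and I have $r = t(t-1)/2$ rows of vertical slack in the bottom half to stack the staircases at different depths. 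I'd present the scheduling as: process the pairs $(i,j)$ in some order; the link for $(i,j)$ uses a horizontal run at its own private row-depth in the bottom half, a vertical descent inside $B_i$ and a vertical descent inside $B_j$, and one diagonal crossing in the middle band; disjointness across pairs is enforced because distinct links live at distinct depths and the vertical descents stay inside their own blocks' columns except for the single crossing step. Connectivity of each $X_i$ is clear since the top-half block is connected and every staircase emanating from $B_i$ is attached to it along row $r$.

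Next I would check the grasping condition. Each branch-set $X_i$ contains the full top-half of its column-block $B_i$, and $B_i$ has width $t-1$, so $X_i$ meets the $t-1$ vertical paths of $H_n$ indexed by the columns of $B_i$ — but I need $t$, so instead I use the horizontal count: $X_i$ contains all of rows $1,\dots,r$ within $B_i$, and $r = t(t-1)/2 \ge t$ for $t \ge 3$ (and $t=2$ is trivial: $H^1_2$ already contains $K_4 \supseteq K_2$ with both branch-sets meeting $\ge 2$ rows), so $X_i$ intersects at least $t$ horizontal paths of the underlying grid. Hence the model is grasped by the underlying grid, as required. I should double-check the edge case $t=2$ separately since then $n=2$, $r=1$ and the ``top half'' is a single row; there $H^1_2$ is the $2\times 2$ grid plus a crossing pair, which contains $K_4$, and picking any two opposite corners as branch-sets, each a single vertex lying on $2 = t$ lines, does the job.

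The main obstacle I expect is the bookkeeping in the bottom half: making the $\binom{t}{2}$ connecting paths simultaneously vertex-disjoint while keeping each branch-set connected and confined enough that the whole thing fits in an $n\times n$ grid with $n = t(t-1)$ (so that the size bound is exactly met, not merely big-$O$). I'd handle this by a clean invariant — assign link $(i,j)$ the depth $r + \ell$ where $\ell$ is the position of $(i,j)$ in a fixed ordering of the $\binom{t}{2}$ pairs, observe $\binom{t}{2} \le r$, and check that the vertical ``elevator'' segments inside each block $B_i$ only ever need to occupy columns of $B_i$ (with the lone exception of the single diagonal step in the middle band, which is precisely what the crossing edges of $H^1_n$ supply) — so two links sharing a block do not collide because they run their elevators at different depths and turn off at different rows. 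Once the invariant is stated, disjointness is immediate and the rest is routine verification.
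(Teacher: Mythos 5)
Your plan---partition the columns into $t$ blocks of width $t-1$, let each $X_i$ own its block in the top half, and route $\binom{t}{2}$ ``L-shaped'' links in the bottom half at distinct depths---does not work as described, and the gap is in exactly the place you flag as ``the main obstacle.'' Your disjointness claim, that ``distinct links live at distinct depths and the vertical descents stay inside their own blocks' columns except for the single crossing step,'' is false. Consider the links $(1,3)$ and $(2,4)$ (so $t\ge 4$). The horizontal run of link $(1,3)$ at depth $d_{13}$ must pass through every column of $B_2$, in particular through the column $b\in B_2$ used by link $(2,4)$'s descent; so $(b,d_{13})$ lies on link $(1,3)$. Link $(2,4)$'s descent occupies $(b,y)$ for all $y$ from $r+1$ down to $d_{24}$, so if $d_{24}\ge d_{13}$ they collide, forcing $d_{24}<d_{13}$. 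Symmetrically, link $(2,4)$'s horizontal run passes through the ascent column $c\in B_3$ of link $(1,3)$, forcing $d_{13}<d_{24}$. These two inequalities are contradictory, so no depth assignment exists. This is not a bookkeeping nuisance but a topological obstruction: once the links are below row $r+1$ they live in a planar grid, so the pattern of interleavings you need cannot be realized there, and the ``one diagonal crossing in the middle band'' per link does not help because it occurs at rows $r,r+1$ while the collisions occur far below at depths $d_{13},d_{24}$. The grasping count (at least $t$ rows met by each $X_i$ since $r=t(t-1)/2\ge t$ for $t\ge 3$, with $t=2$ handled separately) is fine, but the model itself is not constructed.

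The paper avoids this by a completely different, inductive scheme: it builds a $K_{t-1}$ model with the stronger property that every branch set reaches a designated boundary row, then places this model in a corner subgrid of the form $H^1_{(t-1)(t-2)}$ sitting astride the crossing band, and extends each of the $t-1$ branch sets out to row $1$ by threading it through the crossing band along a ``hook,'' while the new $t$-th branch set runs along the crossing band picking up an adjacency to each of the others as they pass through. The crossings are thus used near the attachment points rather than deep in the bottom half, which is what makes the disjointness work. If you want a direct (non-inductive) construction, it would have to mimic this: route the $t$ branch sets so that all the required pairwise adjacencies occur \emph{inside} or immediately adjacent to the crossing band, rather than trying to realize the non-planar pattern with horizontal runs far below it.
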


\begin{proof}
The proof is by induction on $t$.  Let the vertices of $H^1_{t(t-1)}$ be 
labeled as in the definition, and let $L$ be the set of vertices of 
$H^1_{t(t-1)}$ with the second coordinate one.
We actually prove a slightly stronger statement, to facilitate the induction.  
We show that $H^1_{t(t-1)}$ has a $K_t$ minor grasped by the 
underlying grid such that every branch set contains a vertex in $L$.
The statement clearly holds for $t=2$, and so we assume that $t>2$ and that
the statement holds for $t-1$.

Let $H'$ be the subgraph of $H^1_{t(t-1)}$ induced by vertices $(x,y)$, where
$1\le x\le (t-1)(t-2)$ and $t\le y\le (t-1)^2$, and let $L'$ be the
set of vertices of $H'$ with second coordinate $(t-1)^2$.
Then $H^1_{(t-1)(t-2)}$ is isomorphic to $H'$ by an isomorphism that maps the
first row of $H^1_{(t-1)(t-2)}$ onto $L'$.
By the induction hypothesis the graph $H'$ has a $K_{t-1}$ minor 
with branch sets $X_1',X_2',\ldots,X'_{t-1}$ such that $X'_i\cap L'\not=\emptyset$
for all $i=1,2,\ldots,t-1$.
Let $i\in\{1,2,\ldots,t-1\}$. 
Let $x_i$ be such that $(x_i,(t-1)^2)\in X'_i\cap L'$.
We may assume that $x_1>x_2>\cdots>x_{t-1}$.
We define $X_i$ to consist of $X'_i$, the vertices 
$(x_i,(t-1)^2+i)$, $((t-1)(t-2)+2i-1,(t-1)^2+i)$,
$((t-1)(t-2)+2i-1,t(t-1)/2+1)$, 
$((t-1)(t-2)+2i,t(t-1)/2)$, 
$((t-1)(t-2)+2i,1)$, 
and the vertices of vertical and horizontal paths of the underlying grid
connecting those vertices, making each $X_i$ induce a connected subgraph
of $H^1_{t(t-1)}$.
Finally we define $X_t$ as the set containing all the vertices
$((t-1)(t-2)+2i-1,t(t-1)/2)$ and $((t-1)(t-2)+2i,t(t-1)/2+1)$
%\yy{%
%$((t-1)(t-2)+1-i,t-i)$ and  $((t-1)(t-2)+2-i,t-i)$
%}%
for all $i=1,2,\ldots, t-1$, and the vertices of the vertical path
connecting $((t-1)(t-2)+1,t(t-1)/2)$ to $((t-1)(t-2)+1,\yy{1})$.
\yy{This is illustrated in Figure~\ref{fig1}.
In order to satisfy the definition of grasping, we also add to $X_t$ the 
vertices $((t-1)(t-2)+1-i,t-i)$ and  $((t-1)(t-2)+2-i,t-i)$ for all $i=1,2,\ldots, t-1$
and the path joining them.}
It follows that $X_1,X_2,\ldots,X_t$ are the branch sets of a $K_t$ minor,
and each branch set intersects $L$.
%\begin{center}
\begin{figure}[htb]
 \centering
\includegraphics[scale = .75]{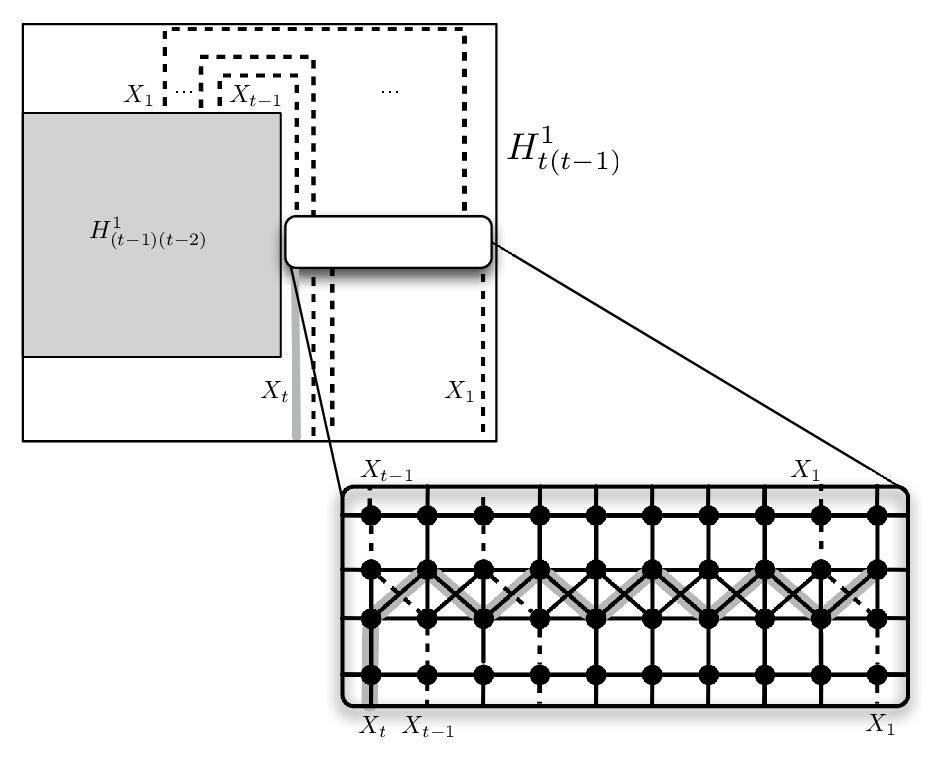}
 \caption{Finding a $K_t$ minor in $H^1_{t(t-1)}$.}
\label{fig1}
\end{figure}
% \end{center}
%Since every $X_i$ intersects 
\yy{%
%the intersections of
Hence   each branch set $X_i$ satisfies the definition of grasping.
}%
%at least $t$ \yy{horizontal and} vertical paths of $H_{t(t-1)}$,
We deduce that the minor is grasped by the underlying grid, 
as required.
\end{proof}

%%%%%%%%%%%%%%%%%%%%%%%%%%%%%%%%%%%%%%%%
\section{Disjoint paths attaching to a mesh}

The goal of this section is to show that given a mesh $M$ 
in a graph $G$, either $G$ has a $K_t$ minor grasped by $M$,
or there exist bounded number of vertices and bounded number of
balls in $M$ of bounded radius such that after deleting those
vertices and balls, every $M$-path has its ends close to each other.

We will need two classic lemmas going forward.
\begin{Lemma}\label{lem:epintervals}
Let $k, r,s \ge 1$ be integers, and  let 
%$I_1, \dots, I_k$ 
$\cal I$ be a set of $k$ intervals on the real line.  
If $k \ge (r-1)(s-1)+1$, then either $\cal I$ has a subset of $r$ pairwise
disjoint intervals, or $\cal I$ has a subset of $s$ intervals that have
non-empty intersection.
\end{Lemma}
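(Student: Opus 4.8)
The plan is to run the standard Mirsky/Dilworth-type argument on the natural order associated with the intervals. For an interval $I\in\mathcal I$, call a sequence $I_1,I_2,\dots,I_m$ of members of $\mathcal I$ an \emph{ascending chain ending at $I$} if $I_m=I$ and $\sup I_a<\inf I_{a+1}$ for $a=1,\dots,m-1$; since the intervals are nonempty this forces $\sup I_a<\inf I_b$ whenever $a<b$, so the members of such a chain are pairwise disjoint. Let $f(I)$ be the largest $m$ for which there is an ascending chain of length $m$ ending at $I$; then $f(I)\ge1$ and, by definition, $f(I)=1+\max\{f(J):J\in\mathcal I,\ \sup J<\inf I\}$, with the convention that the maximum of the empty set is $0$.

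First I would dispose of the easy case: if $f(I)\ge r$ for some $I\in\mathcal I$, then an ascending chain of length $r$ ending at $I$ is a set of $r$ pairwise disjoint intervals, and the first outcome holds. So assume $f$ maps $\mathcal I$ into $\{1,2,\dots,r-1\}$. Since $|\mathcal I|=k\ge(r-1)(s-1)+1$, the pigeonhole principle yields a value $c\in\{1,\dots,r-1\}$ and a subset $\mathcal J\subseteq\mathcal I$ with $|\mathcal J|\ge s$ and $f(J)=c$ for every $J\in\mathcal J$.

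Next I would show that the intervals in $\mathcal J$ pairwise intersect. If $J,J'\in\mathcal J$ were disjoint, say $\sup J<\inf J'$, then appending $J'$ to an ascending chain of length $c$ ending at $J$ would give an ascending chain of length $c+1$ ending at $J'$, whence $f(J')\ge c+1>c$, a contradiction. So any two members of $\mathcal J$ have a common point, and by the one-dimensional Helly theorem the whole family $\mathcal J$ has a common point: writing the intervals as $[a_i,b_i]$ (the general case being identical with suprema and infima in place of endpoints), choose $i$ with $a_i$ maximum and $j$ with $b_j$ minimum; from $[a_i,b_i]\cap[a_j,b_j]\neq\emptyset$ we get $a_i\le b_j$, so every point of $[a_i,b_j]$ lies in all members of $\mathcal J$. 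This gives the required $s$ intervals with nonempty intersection, and hence the second outcome.

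I do not anticipate any genuine obstacle here: the only two facts used beyond a pigeonhole count are the strict monotonicity of $f$ along the ``strictly to the left of'' relation and the one-dimensional Helly theorem, both of which are elementary and standard.
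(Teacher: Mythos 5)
Your proof is correct, and it is the standard Mirsky/Dilworth-type argument for this interval version of the partition theorem: the level function $f$ is strictly increasing along the ``strictly to the left of'' relation, so either some level reaches $r$ (giving $r$ pairwise disjoint intervals) or pigeonhole produces $s$ intervals on a common level, which must pairwise intersect and hence, by one-dimensional Helly, have a common point. The paper itself does not prove Lemma~\ref{lem:epintervals}; it is introduced as one of ``two classic lemmas,'' and the only later allusion (in the proof of Lemma~\ref{alg:grida}) is to ``the standard proofs'' of Lemmas~\ref{lem:epintervals} and~\ref{lem:es}. So there is no proof in the paper to compare against, and your argument is precisely the standard one the authors are implicitly invoking. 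The only place I would tighten the write-up is the appeal to Helly for intervals that need not be closed: you wave at ``suprema and infima in place of endpoints,'' and while the conclusion is indeed true for any finite family of nonempty convex subsets of $\IR$, it is worth one extra line to note that if $\sup_i \inf J_i \ge \inf_j \sup J_j$ then the two witnessing intervals would already be disjoint, so strict inequality holds and any point strictly between these two quantities lies in every member of $\mathcal J$.
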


\begin{Lemma}[Erd\H{o}s and Szeker\'es]\label{lem:es}
Let $r,s\ge1$ be integers.
Every sequence of $k\ge(r-1)(s-1)+1$  real numbers has either 
a non-decreasing subsequence of length $r$,
or a non-increasing subsequence of length $s$.
\end{Lemma}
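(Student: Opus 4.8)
The plan is to prove Lemma~\ref{lem:es} by the classical pigeonhole argument on pairs of monotone-subsequence lengths, which keeps everything self-contained and short. Write the sequence as $a_1, a_2, \dots, a_k$. For each index $i \in \{1, \dots, k\}$ let $u_i$ be the maximum length of a non-decreasing subsequence of $a_1, \dots, a_k$ whose last term is $a_i$, and let $v_i$ be the maximum length of a non-increasing subsequence whose last term is $a_i$. Both quantities are well-defined positive integers, since the one-term subsequence $a_i$ is admissible in each case.

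Next I would argue by contradiction. Suppose the conclusion fails, so there is no non-decreasing subsequence of length $r$ and no non-increasing subsequence of length $s$. Then $1 \le u_i \le r-1$ and $1 \le v_i \le s-1$ for every $i$, so the map $i \mapsto (u_i, v_i)$ takes values in a set of size $(r-1)(s-1)$. Since $k \ge (r-1)(s-1)+1$, by the pigeonhole principle there are indices $i < j$ with $(u_i, v_i) = (u_j, v_j)$.

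Finally I would compare $a_i$ and $a_j$. If $a_i \le a_j$, then appending $a_j$ to a longest non-decreasing subsequence ending at $a_i$ yields a non-decreasing subsequence ending at $a_j$ of length $u_i + 1$, so $u_j \ge u_i + 1 > u_i$, contradicting $u_i = u_j$. If instead $a_i > a_j$, the symmetric argument with non-increasing subsequences gives $v_j \ge v_i + 1 > v_i$, contradicting $v_i = v_j$. In either case we have a contradiction, which proves the lemma.

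There is essentially no hard step here; the single point that needs a little care is the handling of equal values, which is why one works with non-decreasing and non-increasing subsequences rather than strictly monotone ones, so that when $a_i = a_j$ the first case still applies and forces $u_j > u_i$.
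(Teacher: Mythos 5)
Your argument is correct; it is the standard pigeonhole proof of the Erd\H{o}s--Szeker\'es lemma, and the case analysis on $a_i \le a_j$ versus $a_i > a_j$ correctly handles ties. The paper states this lemma without proof (citing it as a classic result), so there is no authors' proof to compare against, but your write-up would serve perfectly well if one wished to make the paper fully self-contained on this point.
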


\begin{Lemma}\label{lem:grida}
Let $t \ge 2$ be a positive integer, let  
$k= 32(t(t-1))^6$, let $G$ be a graph, let $M$ be a mesh in $G$
%$K  \ge 2^{37}t^{16} + 1 = 2\left[4(16t^2)^4\right]^2+1$.  
with distance function $d$, let $X \subseteq V(M)$ with 
$|X| = 2k$ such that $d(x, y) \ge 2t(t-1)$ for all  $x, y \in X$,
and let $F\subseteq E(G)-E(M)$ be a matching of size $k$ with
vertex-set $X$.
Then the graph $G$ has a $K_t$ minor grasped by $M$.  
\end{Lemma}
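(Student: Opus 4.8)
The plan is to use the matching $F$ of size $k$ between the $2k$ widely-separated vertices of $X$ to build a $K_t$ minor grasped by $M$, and the natural route is to reduce to Lemma~\ref{lem:H1}, which already delivers a grasped $K_t$ minor inside the augmented grid $H^1_{t(t-1)}$. So the first step is to observe that $M$, being an $r\times s$ mesh, contains (as a minor, compatibly in the sense of the compatibility definition) a large grid; by Lemma~\ref{lem:controltrans} it suffices to find a grasped $K_t$ minor in a minor $M'$ of $G$ that is compatible with $M$, so I may freely contract parts of $M$ and throw away the rest as long as I keep track of a submesh witnessing compatibility. The goal is therefore to massage $M$ together with the $F$-edges into something containing $H^1_{t(t-1)}$ as a compatible minor, with the two "crossing" edges in the middle row coming from $F$.

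**Routing the matching edges through the mesh.** The heart of the argument is to select, from the $k = 32(t(t-1))^6$ matching edges, a sub-collection of $t(t-1)$ of them (plus the ambient grid structure) whose endpoints sit in "generic position" in $M$ so that they can play the role of the $2t(t-1)-1$ crossing edges in the middle band of $H^1_{t(t-1)}$. Concretely: each endpoint of an $F$-edge lies on some horizontal path $P_i$ and some vertical path $Q_j$ of the mesh, giving it a pair of coordinates in $[r]\times[s]$ (via the map $f$ used to define $d$). Using Lemma~\ref{lem:epintervals} / Lemma~\ref{lem:es} repeatedly (Erd\H{o}s--Szekeres-type pruning), I extract a large sub-collection of the $F$-edges whose endpoint coordinates are monotone / well-spread in both axes; the distance hypothesis $d(x,y)\ge 2t(t-1)$ for all $x,y\in X$ guarantees that after pruning we still have endpoints whose coordinates differ pairwise by at least $2t(t-1)$, so the corresponding rows and columns of $M$ are pairwise disjoint and have room to spare. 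The constant $32(t(t-1))^6$ is exactly what is needed: roughly one factor of $(t(t-1))^2$ for each of the several applications of the sixth-power Ramsey-type pruning on the $\le 2$ coordinates of each of the $2$ endpoints of an edge, with the factor $32$ absorbing the additive slack. After this pruning, the chosen $F$-edges, together with the sub-grid of $M$ spanned by the relevant rows and columns, can be contracted down to a copy of $H^1_{t(t-1)}$: the two families of monotone coordinates give the two diagonal families of crossing edges, and the surplus rows/columns (guaranteed by the distance bound) let me route the grid paths around without collisions. This exhibits $H^1_{t(t-1)}$ as a minor of $G$ compatible with $M$.

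**Finishing.** Once $H^1_{t(t-1)}$ is realized as a compatible minor $M'$ of $G$ with its underlying grid being a submesh image of $M$, Lemma~\ref{lem:H1} provides a $K_t$ minor of $M'$ grasped by the underlying grid of $H^1_{t(t-1)}$, and Lemma~\ref{lem:controltrans} transports this to a $K_t$ minor of $G$ grasped by $M$, which is the conclusion. The main obstacle I expect is the bookkeeping in the pruning step: one must simultaneously control the horizontal and vertical coordinates of both endpoints of each $F$-edge, keep the selected rows and columns pairwise far apart so the grid paths can be routed disjointly, and arrange the two diagonals of crossings to alternate correctly between the two "central" bands of columns as in the definition of $H^1$. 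Getting the combinatorics of "which endpoint goes to which side" right — so that the contracted picture is genuinely $H^1_{t(t-1)}$ and not some other graph — and verifying that the distance lower bound $2t(t-1)$ is exactly enough slack, is where the real work lies; the reduction to Lemmas~\ref{lem:H1} and~\ref{lem:controltrans} is then essentially formal.
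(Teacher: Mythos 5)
Your reduction to Lemma~\ref{lem:H1} via Lemma~\ref{lem:controltrans} is exactly the paper's strategy, and you have correctly identified Lemmas~\ref{lem:epintervals} and~\ref{lem:es} as the pruning tools. However, the middle of your sketch papers over the essential case split, and the claim that ``monotone coordinates give the two diagonal families of crossing edges'' is not true in general. Write $(x_i,y_i)$ and $(u_i,v_i)$ for the ends of the matching edge $e_i$ (after the normalizations that make $x_i<u_i$ and the $x_i$ distinct). Applying Lemma~\ref{lem:epintervals} to the intervals $[x_i,u_i]$ gives two outcomes, and they require genuinely different constructions. If one obtains $t(t-1)$ pairwise \emph{disjoint} intervals, the matching edges can indeed be laid out side by side as the crossings of $H^1_{t(t-1)}$ in the middle band of rows of $M$; this is the easy case and roughly matches your description. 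But the interval lemma's other outcome gives a large family $\{[x_i,u_i]:i\in J\}$ with a \emph{common point} $z$; these intervals are not disjoint, and no amount of monotonicity pruning makes them so (consider all edges spanning essentially the whole width of the grid). Crossings in $H^1_{t(t-1)}$ must occupy distinct horizontal positions in a single band of faces, so you cannot use these $e_i$ directly as crossings.

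The missing idea is what the paper does in that second case: split $M$ along the vertical line at $z$ into $M_1$ (left of $z$) and $M_2$ (at and right of $z$). All endpoints $(u_i,v_i)$ with $i\in J$ lie in $M_2$; take a path $P$ covering $M_2$, apply Lemma~\ref{lem:es} to the order in which the $(u_i,v_i)$ appear along $P$ to extract $2t(t-1)$ edges whose $P$-ends occur in a common monotone order, and pair consecutive ones up. Each pair $e_{i_{2j-1}}, e_{i_{2j}}$ plus the intervening arc of $P$ forms a single \emph{$M_1$-path} joining two left endpoints in $M_1$; these $t(t-1)$ $M_1$-paths are pairwise disjoint, and it is \emph{these} composite paths, not the matching edges themselves, that supply the crossings of $H^1_{t(t-1)}$ inside $M_1$ (which is still compatible with $M$). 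Without this ``hub'' routing the construction does not go through in the overlapping case, so as written your sketch has a genuine gap. (The parameter bookkeeping you propose --- ``one factor of $(t(t-1))^2$ per Ramsey-type application'' --- also does not match the paper's accounting, which uses a single application of Lemma~\ref{lem:epintervals} with $r=t(t-1)$, $s=4(t(t-1))^2$ and a single application of Lemma~\ref{lem:es} with both parameters $2t(t-1)$, plus the preliminary halvings; but that is a secondary issue compared to the missing case.)
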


\begin{proof} 
The definition of distance function involves a grid minor of $M$.
Let $H$ be a grid minor of $M$ that gives rise to the distance function $d$.  
Then $H$ is obtained from $M$ by 
contracting a set of edges.  
Let $G'$ be the minor obtained from $G$ by contracting the same set of edges.
Then $F$ gives rise to a matching $F'$ in $G'$ of size $k$.
Given the way we defined the distance function on a mesh, the ends
of the edges in $F'$ are pairwise at distance at least $2t(t-1)$ with 
respect to the distance function on $H$.
If $G'$ has a $K_t$ minor grasped by $H$, then
$G$ has a $K_t$ minor grasped by $M$ by Lemma~\ref{lem:controltrans}.
Thus it suffices to prove the lemma when $M$ is grid.

We therefore assume for the rest of the proof that $M$ is a grid.
%Let $C$ be the outer cycle of $M$, and
Let the vertices of $M$ be labeled $(x, y)$ for $1 \le x \le s$, 
$1 \le y \le r$.  
We number the edges in $F$ as $e_1,e_2,\ldots$ and denote the
ends of $e_i$ by $(x_i,y_i)$ and $(u_i,v_i)$.
There is at most one edge of $F$ which has an end with distance at most 
$t(t-1)-1$ from a vertex of the outer cycle of $M$.
We discard such an edge from $F$ if it exists. 
The remaining edges $e_i$ therefore satisfy
\begin{itemize}
\item[(1)] if $(x,y)$ is an end of $e_i$, then $t(t-1)<x<s+1-t(t-1)$
and $t(t-1)<y<r+1-t(t-1)$.
\end{itemize}
%
%\hardrmclaim{1}{if $(x,y)$ is an end of $e_i$, then $t(t-1)<x<s+1-t(t-1)$
%and $t(t-1)<y<r+1-t(t-1)$.}
We may temporarily assume that for every $i$ either $x_i<u_i$,
or $x_i=u_i$ and $y_i>v_i$.
By reducing $F$ to no less than half its original size we may assume
that either  $y_i\le v_i$ for all $i$,
or $y_i>v_i$ for all $i$.
In the former case it follows that $x_i<u_i$ for all $i$.
In the latter case we reverse the second coordinate and then swap the 
coordinates (formally we map each vertex $(x,y)$ to $(r+1-y,x)$)
and conclude that we may assume that for at least half the indices $i$
\begin{itemize}
\item[($*$)] $x_i<u_i$ and $y_i\le v_i$.
\end{itemize}
%
%\hardrmclaim{$*$}{$x_i<u_i$ and $y_i\le v_i$.}
By restricting ourselves to a subset of $F$ of size $4(t(t-1))^3$
we may assume that either $x_i\ne x_j$ for all remaining pairs
of distinct edges $e_i,e_j$, or that $x_i=x_j$ for all such pairs.
In the latter case notice that $|y_i-y_j|\ge 2t(t-1)\ge4$,
because $(x_i,y_i)$ and $(x_j,y_j)$ are at distance at least $2t(t-1)$.
In the latter case we swap the coordinates one more time to 
arrive at a set $\{e_1,e_2,\ldots,e_l\}\subseteq F$ such that
for all distinct indices $i,j=1,2,\ldots,l$ condition (1) holds and
\begin{itemize}
%\item both ends of $e_i$ are at distance at least $t(t-1)$ from $C$,
\item[(2)] either $x_i < u_i$, or $x_i=u_i$ and $|x_i-x_j|\ge4$,
\item[(3)] $x_i \neq x_j$, and
\item[(4)] $l \ge 4(t(t-1))^3$.
\end{itemize}
We apply Lemma~\ref{lem:epintervals} 
to the set of intervals $\{[x_i, u_i]: 1\le i\le l\}$.  
We conclude that either there exists a set
$I \subseteq \{1,2,\ldots,l\}$ of size at least $t(t-1)$ such that 
the intervals $\{[x_i, u_i]: i \in I\}$ are pairwise disjoint,
or there exist a set $J \subseteq  \{1,2,\ldots,l\}$ 
of size at least $4(t(t-1))^2$
and an integer $z$ such that $x_i\le z\le u_i$ for all $i\in J$.

Assume first that $I$ exists.
%Then the edges $\{e_i: i\in I\}$ show that
We claim that the graph obtained from $M$ by adding 
the edges $\{e_i: i\in I\}$ has an $H^1_{t(t-1)}$  minor,
where the underlying grid of $H^1_{t(t-1)}$ is compatible with $M$.
To see this we use the first and last $t(t-1)$ vertical and horizontal
paths of $M$
(notice that by (1) for $i\in I$ no end of $e_i$ belongs to any of those
paths), 
and use the edges $e_i$ to obtain the crossings
in the middle row of faces.
%If $x_i<u_i$, then 
The $i^{\rm th}$ crossing will use vertices $(x,y)$ with
$t(t-1)\le y\le r+1-t(t-1)$ and
$x_i\le x\le u_i$ if $x_i<u_i$ and
%whereas if $x_i=u_i$, then the crossing will use vertices $(x,y)$ with
$x_i-1\le x\le x_i+1$ if $x_i=u_i$. 
Condition (2) guarantees that the crossings will be pairwise disjoint.
By Lemma~\ref{lem:H1} the graph $H^1_{t(t-1)}$ has a $K_t$ minor grasped
by the underlying grid of $H^1_{t(t-1)}$.
By Lemma~\ref{lem:controltrans} the graph $G$ has a $K_t$ minor
grasped by $M$, as desired.
This completes the case when $I$ exists.
  
We may therefore assume that $J$ and $z$ exist.
By renumbering the indices 
%and sacrificing every other element of $J$ 
we may assume that $x_1 < x_2 < \dots < x_{4(t(t-1))^2} < z$ 
and $u_i \ge z$ for all $1 \le i \le 4(t(t-1))^2$.
%, and additionally, 
%that $|x_i - x_j| \ge 2$ for all $1 \le i < j \le 4(t(t-1))^2$.  
Let $M_1$ be the  subgraph of $M$ induced by vertices $(x, y)$ with 
$1 \le x < z$ and $1 \le y \le r$, and let
$M_2$ be the subgraph of $M$ induced by vertices $(x, y)$ with 
$z \le x \le s$ and $1 \le y \le r$.  
We see that $(u_i, v_i) \in V(M_2)$ for all $1 \le i \le 4(t(t-1))^2$.  
Let $P$ be a path in $M_2$ covering the vertices of $M_2$.  
The edges $e_i$ for $1 \le i \le 4(t(t-1))^2$ each have one end 
in $P$ and one end in $V(M_1)$.  
By Lemma~\ref{lem:es} there exists a sequence 
$1\le i_1<i_2<\cdots<i_{2t(t-1)}$ such that the ends of
$e_{i_1},e_{i_2},\ldots,e_{i_{2t(t-1)}}$ occur on $P$ in the order listed.
For $j=1,2,\ldots,t(t-1)$ we make use of the edges $e_{i_{2j-1}},e_{i_{2j}}$
and the subpath of $P$ connecting the ends of $e_{i_{2j-1}}$ and $e_{i_{2j}}$
to construct an $M_1$-path with ends $x_{i_{2j-1}}$ and $x_{i_{2j}}$.
The paths just constructed are pairwise vertex-disjoint, and,
similarly as in the previous paragraph, can be used to
deduce that $G$ has an $H^1_{t(t-1)}$ minor,
where the underlying grid is compatible with $M_1$, and hence with $M$.
By Lemma~\ref{lem:H1} the graph $H^1_{t(t-1)}$ has a $K_t$ minor grasped
by the underlying grid of $H^1_{t(t-1)}$.
By Lemma~\ref{lem:controltrans} the graph $G$ has a $K_t$ minor
grasped by $M$, as desired.
\end{proof}

%%%%%%%%%%%%%%%%%%%%%%%%%%%%%%%%%%%%%%%%%%%%%%%%%%%%%
\begin{Lemma}
\label{lem:match}
Let $B$ be a connected graph of maximum degree at most four, 
and let $Y\subseteq V(B)$.
Then there exist at least $(|Y|-1)/4$ disjoint paths in $B$,
each with at least one edge and with both ends in $Y$.
\end{Lemma}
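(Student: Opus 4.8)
The plan is to reduce to the case where $B$ is a tree, and then argue by a greedy/peeling argument on the leaves. First I would take a spanning tree $T$ of $B$; since every path in $T$ is a path in $B$, it suffices to find the required paths in $T$. Note $T$ need not have maximum degree four, but that is fine — the degree bound will only be used at the very end to bound how many vertices of $Y$ are ``wasted'' near a branching, and in fact the cleanest route is to not delete the degree hypothesis at all but instead keep $B$ and use it directly. So: work in $B$ itself, with $Y \subseteq V(B)$ and $\Delta(B) \le 4$.

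The key step is a repeated contraction/peeling argument. Root a spanning tree $T$ of $B$ at an arbitrary vertex, and repeatedly pick a deepest vertex $v$ that has at least two descendants in $Y$ in the current tree (counting $v$ itself if $v\in Y$); then the subtree at $v$ contains at least two elements of $Y$ that can be joined by a path through $v$, these two elements can be removed from $Y$, and the whole subtree at $v$ (or at least the two paths from $v$ to those two elements) deleted, leaving a smaller instance. Each such step consumes two vertices of $Y$ and produces one path, so it yields roughly $|Y|/2$ paths — better than what is asked. The cleanest way to make this rigorous with the stated constant is probably the following matching-style argument: process $Y$ in some order, and maintain a forest of ``used'' paths; whenever two not-yet-matched elements of $Y$ lie in a common subtree with no used path separating them, match them. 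The factor $1/4$ (rather than $1/2$) is the slack one pays for a crude accounting when several branches meet, which is where the degree-$\le 4$ hypothesis enters: at a vertex of degree $d$ one may lose up to $d$ potential endpoints, and $d\le4$.

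Concretely, here is the argument I would write. Proceed by induction on $|V(B)|$. If $|Y|\le 1$ the statement is vacuous. Otherwise pick a spanning tree $T$ and a leaf $\ell$ of $T$; let $e=\ell u$ be its unique edge. If $\ell\notin Y$, delete $\ell$ and apply induction to $B-\ell$ (which is connected, has max degree $\le 4$, and contains all of $Y$). If $\ell\in Y$, walk along $T$ from $\ell$ until reaching either a second vertex $y\in Y$ or a vertex $w$ of degree $\ge 3$ in $T$; in the first case, this walk is a path from $\ell$ to $y$ with both ends in $Y$, and deleting its internal vertices together with $\ell,y$ gives a graph (whose components we treat separately) with $|Y|-2$ marked vertices, to which induction applies, giving $(|Y|-2-1)/4 + 1 \ge (|Y|-1)/4$ paths once one checks the arithmetic. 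In the second case one reroots at $w$ and recurses into the branches; the degree bound ensures the deficit incurred at $w$ is at most a constant, and the $1/4$ is chosen to absorb exactly this.

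The main obstacle I expect is the bookkeeping in the branching case — making sure that when the search from an element of $Y$ hits a high-degree vertex $w$ before finding a partner, the recursion into the (at most four) subtrees hanging off $w$ still delivers the $(|Y|-1)/4$ bound additively, since a subtree might contribute an unmatched leftover element of $Y$ at $w$ and these leftovers from different branches should be paired up through $w$. The honest way to handle this is to prove the slightly stronger statement ``there exist disjoint paths in $B$ with ends in $Y$ using at least $|Y| - (\text{number of leftover elements})$ endpoints, with at most one leftover per subtree'', or simply to phrase the whole thing as: orient/root $T$, and greedily pair elements of $Y$ bottom-up, carrying at most one unpaired token up through each vertex; a vertex of degree $d$ receives at most $d-1$ tokens from below plus possibly itself, pairs them up as much as possible, and passes at most one up — so the number of unpaired tokens ever ``lost'' is $0$ and we get $\lfloor |Y|/2\rfloor \ge (|Y|-1)/4$ paths. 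Written that way the degree hypothesis is barely needed and the constant $1/4$ is very comfortable; I would present this bottom-up token-pairing version as the actual proof.
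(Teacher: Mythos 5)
Your final ``token--pairing'' argument, which you present as the actual proof, has a genuine gap. If a vertex $v$ of the rooted tree collects $k\ge 4$ tokens and ``pairs them up as much as possible,'' the two (or more) resulting paths all pass through $v$ and are therefore \emph{not} disjoint; at most one path can be formed at any given vertex, and once it is formed the remaining tokens at $v$ cannot be passed up either, since the route to the parent would again go through the now-used vertex $v$. In particular the claim that one always obtains $\lfloor |Y|/2\rfloor$ paths is false: take $B=K_{1,4}$ with $Y$ the four leaves. Every path with both ends in $Y$ uses the centre, so there is only one disjoint such path, while $\lfloor |Y|/2\rfloor=2$. The constant $1/4$ is therefore not ``comfortable slack'' absorbing crude bookkeeping --- it is tight, and the degree bound enters at precisely this point. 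The corrected local rule is: at each vertex with at least two tokens form one path, discard the remaining tokens at that vertex, and pass up nothing; at a vertex with at most one token, pass it up. With $\Delta(B)\le 4$ a non-root vertex receives at most three tokens from children plus possibly itself, so each path-forming vertex consumes two tokens and discards at most two more, and one further token may be stranded at the root; hence $|Y|\le 4p+1$, i.e.\ $p\ge(|Y|-1)/4$.

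The paper proves the lemma by a minimal-counterexample argument that achieves the same amortisation without the token bookkeeping: one reduces to $B$ a tree in which every leaf and every neighbour of a leaf lies in $Y$, picks a leaf $t$ of the interior tree $B-L$ (where $L$ is the set of leaves of $B$), notes that $t$ is adjacent to at most three leaves of $B$ because $\deg(t)\le 4$ and one edge of $t$ goes into $B-L$, and deletes $t$ together with its adjacent leaves. This removes at most four elements of $Y$ and yields the single path $tt'$ (with $t'$ a deleted leaf); induction on $|E(B)|$ supplies the remaining $(|Y|-1)/4-1$ paths. Your second sketch (delete a leaf of $T$, walk to the next vertex of $Y$ or to a branching vertex, recurse) is close in spirit to this, but the branching case, which is where the factor $1/4$ is actually earned, is left hand-waved; the paper's choice of the ``star'' around a leaf of $B-L$ is exactly what makes that case clean.
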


\begin{proof}
By a {\em leaf} of $B$ we mean a vertex of degree one.
We may assume for a contradiction that the conclusion does not
hold and, subject to that, $|E(B)|$ is minimum.
Then $B$ is a tree, every leaf belongs to $Y$,
$|Y|\ge6$
and (by contracting the incident edge we see that) 
the unique neighbor of every leaf belongs to $Y$.
Let $L$ be the set of leaves of $B$.
Since $|Y|\ge6$ the graph $B-L$ is a tree on at least two vertices, 
and therefore we may select a leaf $t$ of $B-L$.
Since $t$ has degree at most four and $|Y|\ge6$,
the vertex $t$ is adjacent to at most three leaves of $B$.
Let $B'$ be the graph obtained from $B$ by 
deleting $t$ and all leaves of $B$ adjacent to it, and let $Y':=Y\cap V(B')$.
By the minimality of $B$ there exist at least $(|Y'|-1)/4\ge(|Y|-1)/4-1$
disjoint paths in $B'$, each with at least one edge and both ends in $Y'$.
By adding the path with vertex-set 
$\{t,t'\}$, where $t'$ is a leaf of $B$ adjacent to $t$,
we obtain
a collection as required in the lemma, a contradiction.
\end{proof}
%%%%%%%%%%%%%%%%%%%%%%%%%%%%%%%%%%%%%%%%%%%%%%%%%%%%%
Before the next lemma, let us remark that $3\times2^{12}=12288$. 

\begin{Lemma}
\label{lem:semidisperse}
Let $t\ge1$ be an integer, let $k_0:=12288(t(t-1))^{12}$,
let $G$ be a graph,  let $M$ be a mesh in $G$ with distance function $d$, and
assume that $G$ has a set $\cal P$ of cardinality $k_0$
of pairwise disjoint $M$-paths with the property that 
  the ends of every path $P\in{\cal P}$ can be denoted
by $x(P)$ and $y(P)$ in such a way that $x(P)$ and $y(P)$ are at distance
at least $\rt{10}t(t-1)$ for every $P\in\cal P$, and $x(P)$ and $x(P')$ are at distance
at least $\rt{10}t(t-1)$ for every two distinct paths $P,P'\in {\cal P}$.
Then $G$ has a $K_t$ minor grasped by $M$.
%, or there
%exist sets $A\subseteq V(G)$ and $Z\subseteq V(M)$ such that
%$|A|\le k_0-1$, $|Z|\le 3k_0-3$,
%and if $x,y$ are the ends of an
%$M$-path in $G-A$, then either $d(x,y)<\rt{10}t(t-1)$,
%or each of $x,y$
%lies at distance at most $\rt{10}t(t-1)-1$ from some vertex of $Z$.
\end{Lemma}

\myproof
Let us define a relation $R$ on $V(M)$ by saying that $(x,y)\in R$ if
$d(x,y)<2t(t-1)$. By Lemma~\ref{lem:diversepaths} applied to the relation
$R$, graph $M$ and integer $k=32(t(t-1))^{6}$ we deduce that one of the 
two outcomes holds.
If the first outcome holds, then $G$ has $K_t$ minor grasped by $M$
by Lemma~\ref{lem:grida}, and hence our lemma holds.
%Assume first that there exists a set ${\cal P}_1$ of $k$ pairwise 
%disjoint $V(M)$-paths with ends pairwise at distance at least $2t(t-1)$.
%Let $H$ be the designated grid minor
%of $M$, and let $G'$ be the corresponding minor of $G$.
%Then ${\cal P}_1$ gives rise to a set of $k$ pairwise disjoint $V(H)$-paths
%in $H$, and their ends are pairwise at distance at least $2t(t-1)$ with 
%respect to the distance function on $H$, given the way we defined
%distance function on a mesh.
%Thus $G'$ has a $K_t$ minor grasped by $H$ by Lemma~\ref{lem:grida},
%and hence $G$ has a $K_t$ minor grasped by $M$ by Lemma~\ref{prop:mesh2}.
Thus we may assume that the second outcome of Lemma~\ref{lem:diversepaths}
holds, and hence
there exist sets  $A \subseteq V(G)$ and $Z \subseteq V(M)$ with
$|A| \le k-1$
and $|Z| \le 3k-3$ such that

\hardclaim{1}{for every $M$-path $P$ in
$G-A$ its  ends can be denoted by $x$ and $y$ such that either
$d(x, y)<2t(t-1)$ or $d(x,z)<2t(t-1)$ for some $z \in Z$.}

%Thus the ends of every path $P\in{\cal P}_2$ can be denoted
%by $x(P)$ and $y(P)$ such that $x(P)$ and $y(P)$ are at distance
%at least $\rt{10}t(t-1)$ for every $P$, and $x(P)$ and $x(P')$ are at distance
%at least $\rt{10}t(t-1)$ for every two distinct paths $P,P'\in {\cal P}_2$.
The set ${\cal P}$ has a subset of size at least
$k_0-k$ such that each member is disjoint from $A$.
By (1) there exists $z\in Z$ and a subset of the latter set of paths 
of size at least
$(k_0-k)/(3k-3)$ such that every member $P$ of
the latest set has the property that one of $x(P),y(P)$ is at distance
at most $2t(t-1)$ from $z$.
Let $B$ denote the subgraph of $M$ induced by  vertices of $M$ at distance 
at most $2t(t-1)$ from $z$.
Since the vertices $x(P)$ are pairwise at distance at least $\rt{10}t(t-1)$,
we deduce that $x(P)\in V(B)$ for at most one of those paths $P$. 
By omitting that path we obtain a set
 ${\cal P}'\subseteq{\cal P}$ of disjoint $M$-paths in $G-A$
with $y(P)\in V(B)$ for every $P\in {\cal P}'$ and such that ${\cal P}'$
has cardinality at least
$(k_0-k)/(3k-3)-1\ge 128(t(t-1))^6$. 

%Let $\cal Q$ be the set of vertical and horizontal paths of $M$ that are
%disjoint from $B$.  We define the submesh $M'$ consisting of subpaths of members of $\cal Q$ as follows.
%Let $P_1,P_2,\ldots,P_r$ be the vertical paths of $M$,  let
%$Q_1,Q_2,\ldots,Q_s$ be the horizontal paths of $M$, and let
%$I,J$ be such that $\cal Q$ consists of $P_i$ and $Q_j$ for all $i\in I$
%and all $j\in J$.

\def\junk#1{}
\junk{
Let $P_1,P_2,\ldots,P_r$ be the vertical paths of $M$, and  let
$Q_1,Q_2,\ldots,Q_s$ be the horizontal paths of $M$.
We define $\cal Q$ to be the set of vertical and horizontal paths of $M$ 
consisting of all horizontal paths $P$ such that $P\cap B$ is
a subgraph of the vertical paths of $M$, 
and all vertical paths $Q$ such that $Q\cap B$ is
a subgraph of the horizontal paths of $M$.
(If $z$ is at distance at least $2t(t-1)$ from 
$P_1\cup P_r\cup Q_1\cup Q_s$, then this is equivalent to
saying that $\cal Q$ is the set of vertical and horizontal paths of $M$ 
that are disjoint from $B$; otherwise we need this more complicated
definition.)
We define a submesh $M'$ consisting of subpaths of members of $\cal Q$ as follows.
Let $I,J$ be such that $\cal Q$ consists of $P_i$ and $Q_j$ for all $i\in I$
and all $j\in J$.}

Let $P_1,P_2,\ldots,P_r$ be the vertical paths of $M$, and  let
$Q_1,Q_2,\ldots,Q_s$ be the horizontal paths of $M$.
Let $H$ be a grid minor of $M$ that gave rise to the distance function $d$
on $M$, and let $f:V(M)\to V(H)$ be the corresponding surjection 
as in the definition of distance function.
We define $\cal Q$ to be the set of vertical and horizontal paths 
$P$ of $M$ such that $P$ is not a subgraph of $B$ and there is no
vertex $x$ of $P$ such that $f(x)$ and $f(z)$ are connected by
a curve that intersects $H$ at most $2t(t-1)$ times and does not use
the outer face of $H$.
(If $z$ is at distance at least $2t(t-1)$ from 
$P_1\cup P_r\cup Q_1\cup Q_s$, then this is equivalent to
saying that $\cal Q$ is the set of vertical and horizontal paths of $M$ 
that are disjoint from $B$; otherwise we need this more complicated
definition.)
We define a submesh $M'$ consisting of subpaths of members of $\cal Q$ as follows.
Let $I,J$ be such that $\cal Q$ consists of $P_i$ and $Q_j$ for all $i\in I$
and all $j\in J$.
Let $i_0:=\min I$, $i_1:=\max I$, $j_0:=\min J$, and $j_1:=\max J$.
For $i\in I$ let $P'_i$ be the shortest subpath of $P_i$ from $Q_{j_0}$
to $Q_{j_1}$, and 
for $j\in J$ let $Q'_j$ be the shortest subpath of $Q_j$ from $P'_{i_0}$
to $P'_{i_1}$.
Let $M'$ be the union of $P'_i$ and $Q'_j$ for all $i\in I$ and $j\in J$.
It is not hard to see that $M'$ is a mesh. 
We now select a distance function on $M'$ as follows.
%Let $H$ be a grid minor of $M$ that gave rise to the distance function $d$
%on $M$, as in the definition of distance function.
Starting with $M'$ we first contract all edges that were contracted
during the production of $H$ from $M$, and then contract edges arbitrarily
until we arrive at a grid $H'$.
We use $H'$ in order to define a distance function $d'$ on $M'$.
It follows that

\hardclaim{2}{$d'(x,y)\ge d(x,y)-\rt{8}t(t-1)$ for all  $x,y\in V(M')$.}

Let $P\in{\cal P}'$, and let $x=x(P)$. We wish to define a path $\phi(P)$
with one end $x$.
If $x\in V(M')$, then $\phi(P)$ is defined to be
the path with vertex-set $\{x\}$;
otherwise we proceed as follows. 
By symmetry between the paths $P_i$ and $Q_j$ we may assume that 
$x\in V(P_i)$.
We claim that $P_i\not\in\cal Q$. 
To prove this claim suppose to the contrary that  $P_i\in\cal Q$. 
Since $x\not\in V(M')$ it follows that
when traversing $P_i$ starting from $Q_0$ we either encounter
$x$ strictly before $Q_{i_0}$, or we encounter $x$ strictly after $Q_{j_0}$.
In either case it follows that $x\in V(B)$, a contradiction.
This proves our claim that $P_i\not\in\cal Q$.
Let $j$ be such that either $x\in V(Q_j)$, or when traversing $P_i$ as above
we encounter $Q_j$, then $x$, and then $Q_{j+1}$.
Then at least one of $Q_j,Q_{j+1}$ belongs to $\cal Q$, for otherwise
$x\in V(B)$, a contradiction (if $x\in V(Q_j)$, then $Q_j\in \cal Q$).
If $Q_j\in \cal Q$, then let $\phi(P)$ be the shortest subpath of $P_i$ from
$x$ to $x'\in V(Q_j)$; 
otherwise let $\phi(P)$ be the shortest subpath of $P_i$ from $x$ to 
$x'\in V(Q_{j+1})$. 
The argument used above to show that $P_i\not\in\cal Q$ now implies
that $x'\in V(M')$.

Let $Y$  be the set of all vertices $y(P)$ over all paths $P\in{\cal P}'$.
Since the graph $B$ is connected, 
by Lemma~\ref{lem:match} there exists a set $\cal R$ of at least
 $\lceil(|{\cal P}'|-1)/4\rceil\ge 32(t(t-1)^6$ disjoint subpaths of $B$,
each with distinct ends in $Y$.
For each $R\in\cal R$ with ends $y_1$ and $y_2$ we define an $M'$-path 
by taking the union  $R\cup P_1\cup\phi(P_1)\cup P_2\cup\phi(P_2)$,
where $P_i\in{\cal P}'$ satisfies $y(P_i)=y_i$.
These paths are pairwise vertex-disjoint.
Since for distinct paths $P,P'\in{\cal P}'$ the vertices $x(P),x(P')$
are at distance at least $\rt{10}t(t-1)$ in $M$, they are at distance at least
$2t(t-1)$ in $M'$ by (2).
By Lemma~\ref{lem:grida} the graph $G$ has a $K_t$ minor grasped
by $M'$, and hence it has a $K_t$ minor grasped by $M$
by Lemma~\ref{lem:controltrans}, as desired.~\qed

\begin{Lemma}
\label{lem:rtmesh}
Let $t\ge1$ be an integer, let $k:=12288(t(t-1))^{12}$,
let $G$ be a graph, and let $M$ be a mesh in $G$ with distance function $d$.
Then either $G$ has a $K_t$ minor grasped by $M$, or there
exist sets $A\subseteq V(G)$ and $Z\subseteq V(M)$ such that
$|A|\le k-1$, $|Z|\le 3k-3$,
and if $x,y$ are the ends of an
$M$-path in $G-A$, then either $d(x,y)<\rt{10}t(t-1)$,
or each of $x,y$
lies at distance at most $\rt{10}t(t-1)-1$ from some vertex of $Z$.
\end{Lemma}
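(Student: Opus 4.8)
The plan is to apply Lemma~\ref{lem:spreadpaths} with a suitable relation and then to convert its ``many paths'' outcome into a clique minor by means of Lemmas~\ref{lem:grida}, \ref{lem:controltrans}, \ref{lem:match} and~\ref{lem:H1}. We may assume $t\ge2$, since for $t=1$ the graph $G$ has a $K_1$ minor grasped by $M$ trivially. Define a relation $R$ on $V(M)$ by putting $(x,y)\in R$ exactly when $d(x,y)<10t(t-1)$; since $t\ge2$ this is reflexive and symmetric, and, as $d$ is integer valued, for $z\in V(M)$ the ball $R(z)$ is precisely the set of vertices of $M$ at distance at most $10t(t-1)-1$ from $z$. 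Apply Lemma~\ref{lem:spreadpaths} to $G$, $M$, $R$ and $k:=k_0$. If its second outcome holds we obtain $A\subseteq V(G)$ and $Z\subseteq V(M)$ with $|A|\le k_0-1$ and $|Z|\le 3k_0-3$ such that every $M$-path of $G-A$ with ends $x,y$ satisfies $d(x,y)<10t(t-1)$ or $x,y\in\bigcup_{z\in Z}R(z)$, which is word for word the desired conclusion. So from now on assume the first outcome: there are pairwise disjoint $R$-semi-dispersed $M$-paths $P_1,\dots,P_{k_0}$, with ends labelled $x_i,y_i$ so that $d(x_i,x_j)\ge10t(t-1)$ for all distinct $i,j$ and $d(x_i,y_i)\ge10t(t-1)$ for all $i$; the task is to produce a $K_t$ minor of $G$ grasped by $M$.

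Contracting each $P_i$ to a single edge $f_i$ with ends $x_i,y_i$ yields a minor $G_1$ of $G$ in which, since the $P_i$ are pairwise disjoint and internally disjoint from $M$ and we contract no edge of $M$, the mesh $M$ persists unchanged (hence is a submesh of $G_1$ compatible with $M$), and $F:=\{f_1,\dots,f_{k_0}\}\subseteq E(G_1)\setminus E(M)$ is a matching on the vertices $x_i,y_i$, still with $d(x_i,x_j)\ge10t(t-1)$ and $d(x_i,y_i)\ge10t(t-1)$. It now suffices to exhibit, in some minor $G'$ of $G$ carrying a mesh $M'$ compatible with $M$, a matching of size $k_1:=32(t(t-1))^6$ all of whose $2k_1$ ends are pairwise at distance (in $M'$) at least $2t(t-1)$: then Lemma~\ref{lem:grida} gives a $K_t$ minor of $G'$ grasped by $M'$, and Lemma~\ref{lem:controltrans} transfers it to a $K_t$ minor of $G$ grasped by $M$. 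The vertices $x_i$ are already spread out far enough; the difficulty is that the $y_i$ are uncontrolled and may even lie close to the $x_j$.

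The crucial geometric input is that in the underlying grid the distance function degenerates near the boundary but is well behaved in the interior: a ball of radius $2t(t-1)-1$ about a vertex at distance at least $2t(t-1)$ from the outer cycle of $M$ is contained in an $\ell_\infty$-ball of the grid and so meets only $O((t(t-1))^2)$ grid vertices, whereas any two vertices at distance at most $2t(t-1)-1$ from the outer cycle are at distance less than $10t(t-1)$ from each other. Consequently at most one $x_i$ lies that close to the outer cycle, and, since $k_0$ is far larger than $k_1$ times any fixed power of $t(t-1)$, a greedy selection disposes of the case in which many of the $y_i$ lie far from the outer cycle: if the $y_i$ genuinely spread out we directly obtain the desired matching, while if they cluster around a single vertex $z^*$ of $M$ far from the outer cycle we delete from $M$ a bounded block of rows and columns containing that cluster, obtaining a submesh $M'$ of $M$ disjoint from the block, on which the pairwise distances between the $x_i$ have shrunk by at most $O(t(t-1))$ and so are still at least $9t(t-1)\ge2t(t-1)$; then Lemma~\ref{lem:match}, applied inside the deleted block (which is connected and of maximum degree at most four) with $Y$ the set of the $y_i$ in it, pairs up the $f_i$ through interior sub-paths into enough pairwise disjoint $M'$-paths of $G_1$ whose ends are among the $x_i$ — exactly what Lemma~\ref{lem:grida} requires. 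The remaining possibility is that almost all $f_i$ have $y_i$ within distance $2t(t-1)-1$ of the outer cycle of $M$; by the boundary degeneracy just noted these $y_i$ are then mutually close, so one is again in a ``cluster'' situation, now attached to the outer cycle, which one resolves by the analogous rerouting through paths near the outer cycle, or, failing that, by directly assembling an $H^1_{t(t-1)}$ minor and invoking Lemma~\ref{lem:H1}.

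The main obstacle is this last case, where the path-ends accumulate near the outer cycle of $M$: one cannot simply delete a block disjoint from a submesh there, because near-boundary vertices meet every submesh, so the rerouting must be done along the outer cycle itself, and one must arrange it so that what is fed to Lemma~\ref{lem:grida} (or the $H^1_{t(t-1)}$ minor) lives in a minor of $G$ genuinely carrying a mesh compatible with $M$ whose distance function has not collapsed, all the while tracking how the topological grid distance behaves near the boundary through each pigeonhole step. One may assume at the outset that both dimensions of $M$ are large, since otherwise every pair of vertices of $M$ is at distance less than $10t(t-1)$ and the second outcome holds trivially with $A=Z=\emptyset$.
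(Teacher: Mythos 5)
There is a genuine gap. You apply Lemma~\ref{lem:spreadpaths} once, with the relation ``$d(x,y)<10t(t-1)$'', and then try to handle the resulting $k_0$ semi-dispersed $M$-paths $P_1,\dots,P_{k_0}$ by arguing that the unconstrained ends $y_1,\dots,y_{k_0}$ must ``either spread out or cluster around a single vertex $z^*$''. That dichotomy is not available: semi-dispersal only constrains the $x_i$'s pairwise and each $x_i$ relative to its own $y_i$, and leaves the $y_i$'s entirely free. They could, for instance, be spread at pairwise distances of about $5t(t-1)$, in which case no large sub-collection is pairwise $10t(t-1)$-dispersed and no large sub-collection lies in a single ball of radius $O(t(t-1))$; the ``greedy selection'' you invoke does not resolve this. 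Even if one tried a Ramsey-type argument to extract either a dispersed subfamily or a clustered one, nothing in the semi-dispersal hypothesis supplies the cluster centre $z^*$ or bounds the number of potential cluster locations, which is exactly what is needed for the subsequent submesh surgery.

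The paper's proof closes this gap by making a \emph{preliminary} application of Lemma~\ref{lem:diversepaths} with the smaller radius $2t(t-1)$ and the smaller parameter $k=32(t(t-1))^6$. If that lemma's first outcome holds, the $2k$ ends are already pairwise far apart and Lemma~\ref{lem:grida} applies at once. Otherwise one obtains sets $A,Z$ (with $|Z|\le 3k-3$) and the structural fact that every $M$-path of $G-A$ has an end within distance $2t(t-1)$ of its other end or of $Z$. Only then is Lemma~\ref{lem:spreadpaths} applied with $k_0$ and radius $10t(t-1)$; the resulting paths can be pruned to avoid $A$, and the preliminary structural fact, combined with the pairwise spreading of the $x(P)$'s, forces the $y(P)$'s (after discarding at most one path) to lie inside a \emph{single} ball $B$ of radius $2t(t-1)$ around one $z\in Z$, by a pigeonhole over the bounded set $Z$. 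That is what justifies applying Lemma~\ref{lem:match} inside $B$ and routing $M'$-paths through the connecting segments $\phi(P)$. Your proposal omits this entire preliminary step, so the cluster never materialises; moreover your treatment of the near-boundary case (``analogous rerouting \dots or, failing that, by directly assembling an $H^1_{t(t-1)}$ minor'') is left as a sketch rather than an argument. In short, you need the two-stage use of the Erd\H{o}s--P\'osa lemmas of Section~\ref{sec:mpaths}, not a single application of Lemma~\ref{lem:spreadpaths}.
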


\myproof
We define a relation $R$ on $V(M)$ by saying that $(x,y)\in R$ if
$d(x,y)<\rt{10}t(t-1)$ and apply  Lemma~\ref{lem:spreadpaths} to  the relation
$R$, mesh $M$ and integer $k$.
If the second outcome holds, then the second outcome of the
current lemma holds, and so we may assume that the first outcome of
 Lemma~\ref{lem:spreadpaths} holds.
Thus there exists a set ${\cal P}$ of $k$ pairwise disjoint $M$-paths
that are $R$-semi-dispersed. 
The set $\cal P$ satisfies the hypothesis of Lemma~\ref{lem:semidisperse},
and hence that lemma implies that the graph $G$ has a $K_t$ minor grasped by $M$, as desired.~\qed

%\newpage
%%%%%%%%%%%%%%%%%%%%%%%%%%%%%%%%%%%%%%%
\section{Proof of the Flat Wall Theorem}
\label{sec:proof}

We need a somewhat technical lemma before we can begin the proof of Theorem~\ref{thm:main}.  In the proof of 
Theorem \ref{thm:main}, we will find a large flat wall $W'$ in a subgraph, say $G'$, of the original graph $G$.  To find a 
flat wall in $G$ itself, we then consider a smaller subwall $\zz{W''}$ of $W'$.  It is intuitive that $\zz{W''}$ should be flat as well; the near-planarity of $\zz{W'}$ should ensure this.  However, to rigorously consider the sequence of reductions certifying that 
a subwall of $W'$ is flat requires some care.  The next lemma allows us to do so.
It is the main lemma we need for the proof of the Flat Wall Theorem.

\begin{Lemma}
\label{lem:flatwalllike}
Let $G$ be a graph and let $C$ be a cycle in $G$ such that some $C$-reduction of $G$ can be 
drawn in the plane with $C$ bounding a face. Let $W$ be
a subgraph of $G$ and let $D$ be a cycle in $W$ such that $W-V(D)$ is connected and
there exist four internally disjoint paths from 
$V(W)\setminus V(D)$ to $V(C)$ with distinct ends in $V(C)$
such that each intersects $D$ in a (non-null) path.
Then  there exists a separation $(A,B)$ in $G$ such that
\begin{enumerate}
\item[{\rm(1)}] $A\cap B\subseteq V(D)$,
\item[{\rm(2)}] $V(W)\subseteq B$,
\item[{\rm(3)}] $V(C)\subseteq A$, and
\item[{\rm(4)}] some $A\cap B$-reduction of $G[B]$ can be drawn in a disk with $A\cap B$ drawn on the
boundary of the disk in the order determined by $D$.
\end{enumerate}
%$W$ is flat in $G$.
%Furthermore, there exists a separation $(A,B)$ that flatens $W$ such that
%$V(C)\subseteq A$.
%If some $C$-reduction of $G$ can be drawn in the plane with $C$ bounding
%the outer face, then $W$ is a flat wall in $G$ and the cycle $D$
%separates $V(W)-V(D)$ from $V(C)$.
\end{Lemma}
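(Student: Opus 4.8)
The idea is to transfer the planar near-embedding of a $C$-reduction of $G$ "inward" along the four paths so that the wall $W$ ends up on the bounded side of a curve meeting the drawing in at most four vertices of $D$, and then let $(A,B)$ be the separation read off from that curve. First I would invoke Lemma~\ref{lem:optreduct} to fix an \emph{optimal} $C$-reduction sequence $(B_1,\dots,B_k)$ for $G$, producing a plane graph $\Gamma$ (a $C$-reduction of $G$) drawn with $C$ bounding a face, together with the separations $(A_i,B_i)$. The point of optimality is that no $B_i$ can be enlarged, which is what will let me argue that the four prescribed paths from $V(W)\setminus V(D)$ to $V(C)$ survive (in a suitable sense) in $\Gamma$: each such path either lies in the part of $G$ that is kept in $\Gamma$, or it enters a reduced piece $B_i$ and then, by the defining property of an elementary $C$-reduction (there are $|A_i\cap B_i|\le 3$ disjoint-except-for-$v$ paths from $B_i$ to $C$), it can be rerouted through the clique that replaced $B_i$. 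So in $\Gamma$ I get four paths from the image of $W-V(D)$ to $V(C)$ with distinct ends on $C$, each meeting $D$ in a nonnull subpath.

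Now I work inside the plane graph $\Gamma$. Since $C$ bounds the outer face, the four path-ends on $C$ appear in some cyclic order; together with the subpaths of $D$ through which the four paths pass, they cut the disk into regions. Because $W-V(D)$ is connected and attaches to all four of these paths, the image of $W-V(D)$ must lie in one region, which I want to be a region bounded by (arcs of) $D$ and arcs disjoint from the rest of $G$. Concretely, I would take a simple closed curve $\gamma$ in the plane that (i) hugs the four connecting paths and the outer cycle $C$ on one side, (ii) crosses $\Gamma$ only in at most four vertices, all lying on $D$ — one on each of the four $D$-subpaths — and (iii) separates (the image of) $W$ from $C$. Let $B$ be the set of vertices of $G$ drawn on or inside $\gamma$, together with the vertices of any reduced piece $B_i$ whose "attachment clique" lies inside $\gamma$; let $A$ be the vertices on or outside $\gamma$, similarly completed by the reduced pieces. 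Then $A\cap B$ is contained in the at most four vertices of $D$ that $\gamma$ crosses, giving (1); $W\subseteq B$ by construction, giving (2); $C\subseteq A$, giving (3).

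For (4), the graph $G[B]$ (restricted to what is drawn inside $\gamma$, i.e. the corresponding part of $\Gamma$ together with the reduced pieces hanging off it) already comes with a plane drawing inside the disk bounded by $\gamma$, with the $\le 4$ vertices of $A\cap B$ on the boundary circle. The reduced pieces $B_i$ that were cut away in forming $\Gamma$ and that sit on the $B$ side can be reinstalled: each such piece attaches to $\Gamma$ at $\le 3$ vertices that form a face (or an edge, or are a single vertex) of $\Gamma$, so re-expanding them corresponds exactly to performing $A\cap B$-reductions \emph{in reverse}; hence $\Gamma[B]$ with these pieces reattached is precisely $G[B]$, and $\Gamma[B]$ is an $A\cap B$-reduction of $G[B]$ that is drawn in the disk with $A\cap B$ on the boundary in the cyclic order inherited from $D$. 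The order being "determined by $D$" follows because $\gamma$ crosses $D$ in those $\le 4$ vertices in the cyclic order in which they occur along $D$.

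**Main obstacle.** The delicate point is the bookkeeping around the reductions: the four paths live in $G$, but the planar drawing is of a $C$-reduction $\Gamma\ne G$, and I must (a) reroute each of the four paths through the cliques that replaced reduced pieces without losing the property that each meets $D$ in a nonnull subpath and that the four retain distinct ends on $C$, and (b) make sure that when I choose $\gamma$ in $\Gamma$ and then re-expand the reduced pieces on the $B$ side, I recover exactly $G[B]$ and a legitimate $A\cap B$-reduction of it — in particular that no reduced piece straddles $\gamma$. Optimality of the $C$-reduction sequence is exactly the hypothesis designed to make (a) work (a straddling or un-enlargeable piece would contradict maximality), and a careful but routine topological argument about the region structure of $\Gamma$ handles (b). I expect the writeup to consist mostly of setting up this correspondence carefully; the topology of choosing $\gamma$ is then essentially forced.
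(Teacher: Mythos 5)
Your plan is genuinely different from the paper's. The paper proves the lemma by \emph{induction on $|V(G)|$}, peeling off one elementary $C$-reduction $(X_1,Y_1)$ at a time: it forms the reduced graph $G'$, adjusts $W,D$ to $W',D'$, invokes the inductive hypothesis on $G'$ to get $(A',B')$, and then carefully lifts $(A',B')$ to a separation $(A,B)$ of $G$. To make the lift work the paper carries an \emph{extra inductive invariant} (its condition (5), about triangles $x,y,z$ with $z\in A\setminus B$ and $x,y\in A\cap B$), and uses the optimality of the $C$-reduction sequence precisely to show that when a 3-clique $X\cap Y$ straddles $A'\cap B'$ (two vertices on the boundary, one off it), the two boundary vertices must be consecutive in the cyclic order along $D'$, so the deleted subpath of $D$ can be re-inserted between them. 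Your proposal instead tries to do everything in one shot inside the final plane graph $\Gamma$ and then re-expand.

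There are two concrete gaps in your version. First, the curve $\gamma$ cannot in general cross $\Gamma$ in only four vertices: the conclusion of the lemma allows $A\cap B$ to be an arbitrary subset of $V(D)$, and indeed the paper takes $\gamma$ to be the boundary of a slightly enlarged disk $\Delta'$ whose boundary meets $G$ in a (possibly large) subset of $V(D)$. Four points of $D$ generally do \emph{not} separate $W$ from $C$ in $\Gamma$, so a $4$-point $\gamma$ need not exist. Second, and more seriously, the sentence ``each such piece attaches to $\Gamma$ at $\le 3$ vertices that form a face ... so re-expanding them corresponds exactly to performing $A\cap B$-reductions in reverse'' does not hold as stated: the reductions are nested, so the attachment clique of an earlier piece $B_i$ may itself be partially absorbed by a later reduction and need not survive as a clique or a face of $\Gamma$; and, even for a surviving clique, its three attachments can lie partly in $A\cap B$ and partly in $A\setminus B$ (the ``straddling'' case you flag). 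Resolving exactly this straddling is the technical heart of the lemma; it is not a routine topological argument about the regions of $\Gamma$ but the place where the paper needs both the optimality of the reduction sequence and the auxiliary invariant (5), applied one reduction at a time. As written, your ``(b)'' therefore defers precisely the nontrivial content of the proof.
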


\def\bd{{\rm bd}}

\begin{proof}
\zz{%
By Theorem~\ref{thm:crossredrend} there exists a $C$-rendition $(\Gamma,\sigma,\pi)$ of $G$,
as defined prior to Theorem~\ref{thm:crossredrend}. In particular, $\Gamma$ is a painting in the unit disk
$\Delta$.
We now define a set $X\subseteq\Delta$ homeomorphic to the unit circle.
The existence of the  four internally disjoint paths from 
$V(W)\setminus V(D)$ to $V(C)$ implies that $D$ is not a subgraph of $\sigma(c)$,
for all $c\in C(\Gamma)$.
Therefore $D$ can be written as $P_1\cup P_2\cup\cdots\cup P_n$, where $n\ge2$ and each $P_i$
is a path with both ends and no internal vertex in $\pi(N(\Gamma))$.
For each $i=1,2,\ldots,n$ the path $P_i$ is a subgraph of $\sigma(c_i)$ for a unique $c_i\in C(\Gamma)$.
Let $\pi(x)$ and $\pi(y)$ be the ends of $P_i$.
Let $X_i$ be the closure of a component of bd$(c_i)\setminus\{x,y\}$  that is disjoint from $N(\Gamma)$.
Thus if $|\widetilde c_i|=3$, then this component is unique, whereas if $|\widetilde c_i|=2$,
then there are two such components. 
\cc{If $|\widetilde c_i|=3$, then let bd$(c_i)\setminus\{x,y\}=\{z_i\}$; otherwise $z_i$ is undefined.} 
Finally let $X=X_1\cup X_2\cup\cdots\cup X_n$.
We will refer to $X$ as the {\em track} of $D$.}

\zz{%
Let $\Delta'$ be the closed disk bounded by $X$, let $A$ be the union of $V(\sigma(c))$
over all $c\in C(\Gamma)$ such that $c\not\subseteq\Delta'$, and  let $B'$ be the union of $V(\sigma(c))$
over all $c\in C(\Gamma)$ such that $c\subseteq\Delta'$.
Then $(A,B')$ is a separation of $G$ that satisfies (1) and (3).
Let $B$ be the union of $B'$ and $V(P_i)$ for all $i=1,2,\ldots,n$  such  that 
$\sigma(c_i)\not\subseteq\Delta'$.
Then $(A,B)$ is also a separation of $G$, and it also satisfies (1) and (3).}

\zz{%
We claim that $(A,B)$ satisfies the conclusion of the theorem.
To prove that we must show that  $(A,B)$ satisfies (2) and (4), and we begin with (4).
\cc{For $i=1,2,\ldots,n$  such  that $\sigma(c_i)\not\subseteq\Delta'$ let $d_i$ be a closed 
disk with $d_i=\overline{c_i}$ if $|\widetilde c_i|=2$ and $c_i\cap X\subseteq d_i\subseteq \overline{c_i}\setminus \{z_i\}$
otherwise.}
Let $\Delta''$ be the union of $\Delta'$ and 
%the closures of 
all the \cc{disks $d_i$}.
Then  $\Delta''$ is a closed disk.
% with $\bd(\Delta')\cap N(\Gamma)=\bd(\Delta'')\cap N(\Gamma)$.
Let $\Gamma'$  be the painting defined by  $N(\Gamma')= N(\Gamma)\cap\Delta'$ and
 $U(\Gamma')= U(\Gamma)\cap\Delta''$.
Thus every cell $c\in C(\Gamma')$ is either a subset of $\Delta'$, in which case $c\in C(\Gamma)$,
 or there exists $i=1,2,\ldots,n$ such that \cc{$\sigma(c_i)\not\subseteq\Delta'$ and $c\subseteq d_i$}.
%$c=c_i$ or $c=c_i\cup\{x\}$, where $x\in\widetilde c_i \setminus \Delta'$.
In the former  case we define $\sigma'(c)=\sigma(c)$, and in the latter case we define
 $\sigma'(c)=P_i$.
We define $\pi'$ to be the restriction of $\pi$ to $N(\Gamma')$.
Then $(\Gamma',\sigma',\pi')$ is an $\Omega$-rendition of $G[B]$,
where $\Omega$ is a cyclic ordering of $A\cap B$ and the cyclic order is determined by the order on $D$.
It follows from Theorem~\ref{thm:crossredrend} that $(A,B)$ satisfies (4).}

\zz{%
To prove that $(A,B)$ satisfies (2) we first note that $V(D)\subseteq B$, and so it remains to show
that $V(W)\setminus V(D)\subseteq B$. To that end suppose for a contradiction that 
$V(W)\setminus V(D)\not\subseteq B$.
Since  $W-V(D)$ is connected, it follows that  $W-V(D)$ is a subgraph of
$$\bigcup(\sigma(c)\,:\,c\in C(\Gamma)\hbox{ and }c\not\subseteq\Delta').$$
Let $P_1,P_2,P_3$ be three of the four paths guaranteed by the hypothesis of the lemma.
We may assume that they have a common end in $W-V(D)$.
By considering the ``tracks" of $P_1,P_2,P_3$ (defined similarly as above), we obtain
a planar drawing of the graph $H':=C\cup D\cup P_1\cup P_2\cup P_3$ in which both $C$ and $D$
bound faces. Let $H$ be obtained from $H'$ by 
adding a new vertex in the face bounded by $D$ and
joining it by an edge to every vertex of $D$,
and adding a new vertex in the face bounded by $C$ and joining it by an edge to
every vertex of $C$.
Then $H$ is planar, and yet it has a $K_{3,3}$ subdivision (because each $P_i$ intersects $D$), a contradiction.
This proves that  $(A,B)$ satisfies (2), and hence it satisfies the conclusion of the lemma.}
\end{proof}

Let $H$ be a subgraph of a graph $G$.  
An \emph{$H$-bridge}\?{$H$-bridge} in $G$ is a
connected subgraph $B$ of $G$ such that $E(B)\cap E(H)=\emptyset$
and either $E(B)$ consists of a unique edge with both ends in $H$, or for
some component $C$ of $G\backslash V(H)$ the set $E(B)$ consists of all
edges of $G$ with at least one end in $V(C)$.  The vertices in
$V(B)\cap V(H)$ are called the \emph{attachments}\?{attachments} of $B$.

We are now ready to prove the Flat Wall Theorem, which we restate.

\begin{Theorem}
\label{thm:wst}
Let $r,t \ge 1$ be integers, let $r$ be even, let 
$R=49152t^{24}(\rt{40}t^2+r)$, let $G$ be a graph, and 
let $W$ be an $R$-wall in $G$.
Then either $G$ has a model of a $K_t$ minor grasped by $W$, or there exist
a set $A\subseteq V(G)$ of size at most $12288t^{24}$ and an $r$-subwall
$W'$ of $W$ such that $V(W')\cap A=\emptyset$ and $W'$ is a flat wall
in $G-A$.
\end{Theorem}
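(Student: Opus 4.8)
The plan is to combine the Erdős–Pósa machinery of Section~\ref{sec:mpaths}--\ref{sec:mesh} (in the guise of Lemma~\ref{lem:rtmesh}, which treats a wall as a mesh) with Theorem~\ref{thm:crossreduct} and the technical gluing Lemma~\ref{lem:flatwalllike}. Since an $R$-wall is an $R\times R$-mesh, apply Lemma~\ref{lem:rtmesh} to $W$ inside $G$: either $G$ has a $K_t$ minor grasped by $W$ (and we are done, by Lemma~\ref{lem:controltrans} this grasped $K_t$ transfers back as required), or there are sets $A\subseteq V(G)$ with $|A|\le k_0-1$ and $Z\subseteq V(M)$ with $|Z|\le 3k_0-3$ such that every $W$-path in $G-A$ either has its ends within distance $10t(t-1)$ of each other, or has each end within distance $10t(t-1)-1$ of some vertex of $Z$, where $k_0=12288(t(t-1))^{12}$. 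Note $k_0-1\le 12288t^{24}$, so the bound on $|A|$ is as promised; here is where the exponent $24$ and the constant $12288=3\cdot2^{12}$ come from. Work henceforth in $G-A$.

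Next I would choose the subwall. The ``bad'' part of $W$ consists of the balls of radius $\approx 10t(t-1)$ around the (at most $3k_0-3$) vertices of $Z$, together with the boundary layers of width $\approx 40t^2$; each such ball meets only $O(t^2)$ horizontal and $O(t^2)$ vertical paths of $W$. So after deleting the rows and columns of $W$ that meet $Z$ or lie near the outer cycle — this removes $O(t^2\cdot k_0)=O(t^{24}\cdot t^2)$ rows and columns — and regrouping into blocks, one locates, by a counting/pigeonhole argument, an $r\times r$ block of consecutive surviving rows and columns forming an $r$-subwall $W'$ disjoint from $A$ such that \emph{no $W'$-path in $G-A$ has ends far apart}, i.e.\ every $(G-A)$-bridge of $W'$ attaches to $W'$ within a small ball. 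The arithmetic here is exactly what forces $R=49152t^{24}(40t^2+r)$: the $49152=4\cdot 12288$ and $40t^2$ account for four copies of the deleted material (from the four ``directions'' handled in Lemma~\ref{lem:rtmesh} and the boundary), scaled by $r$. Let $D$ be the outer cycle of $W'$.

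The heart of the argument is then to show $W'$ is flat in $G-A$. First I would argue that $G-A$ has no $C$-cross for a suitable cycle $C$ built around $D$: if there were two disjoint paths crossing $D$ with the four ends in order on $D$, one could route them through the wall $W'$ together with the ``central'' structure of $W$ to build an $H^1_{t(t-1)}$ minor — using Lemma~\ref{lem:H1} and the fact that there are no long $W'$-bridges — hence a $K_t$ minor grasped by $W$, contradiction (this is the same style of reasoning as in Lemma~\ref{lem:grida}). By Theorem~\ref{thm:crossreduct}, some $C$-reduction of the relevant subgraph of $G-A$ can then be drawn in the plane with $C$ bounding a face. Finally, to pass from this ``planarity of some $C$-reduction'' to the genuine flatness of $W'$ (a separation $(X,Y)$ with $X\cap Y\subseteq V(D)$, pegs in $X$, and a disk drawing of an $X\cap Y$-reduction of $G[Y]$), apply Lemma~\ref{lem:flatwalllike} with its $W,D,C$ instantiated by $W'$, $D$, and $C$: the hypotheses (four internally disjoint paths from $V(W')\setminus V(D)$ to $V(C)$, each meeting $D$ in a nonnull path, and $W'-V(D)$ connected) are arranged from the wall structure, and the conclusion is precisely the definition of flat wall.

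The step I expect to be the main obstacle is the construction of $W'$ together with the verification that \emph{no} $W'$-bridge in $G-A$ is long, and the companion claim that a $C$-cross would produce an $H^1$- and hence $K_t$-minor grasped by the \emph{original} wall $W$ (not merely by $W'$). The bridge/attachment bookkeeping — making sure the discarded rows and columns genuinely absorb all of $Z$ and all boundary effects, so that every surviving $W'$-path has both ends in a common small ball, and that those balls, being disjoint from the wall, can be contracted away cleanly — is where the proof has real content, and where compatibility of minors (Lemma~\ref{lem:controltrans}) must be invoked carefully so that grasping is preserved through the contractions used to build $H^1_{t(t-1)}$. The routing of the four paths to $C$ needed to feed Lemma~\ref{lem:flatwalllike}, and the choice of $C$ so that the peg and corner conditions come out right, is the other delicate point, but it is more a matter of care than of a new idea.
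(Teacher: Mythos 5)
Your high-level plan (Lemma~\ref{lem:rtmesh} to get $A$ and $Z$; a pigeonhole step to find a subwall far from $A\cup Z$; Theorem~\ref{thm:crossreduct} plus Lemma~\ref{lem:flatwalllike} to certify flatness) matches the paper, and your bridge/attachment bookkeeping about $Z$-balls absorbing all long $W$-paths is on the right track. But there is a genuine gap at the heart of the argument, in the step where you ``argue that $G-A$ has no $C$-cross for a suitable cycle $C$ built around $D$''.

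You claim that a single $C$-cross, ``together with the central structure of $W$'', yields an $H^1_{t(t-1)}$ minor and hence a $K_t$ minor grasped by $W$. That cannot work: a wall is planar, so all the crossings of $H^1_{t(t-1)}$ must come from paths outside the wall, and $H^1_{t(t-1)}$ requires $t(t-1)-1$ pairwise disjoint crossing pairs in the middle row of faces. A single cross contributes exactly one crossing. Indeed, a large wall with one crossing pair added embeds in the torus and so has no $K_8$ minor at all, regardless of how large the wall is; a single-cross hypothesis simply cannot be contradicted by producing $K_t$ for $t\ge 8$. The paper's proof avoids this by not working with a single subwall $W'$: inside a strip $S$ disjoint from $A\cup Z$ it carves out $t(t-1)$ subwalls $W_1,\dots,W_{t(t-1)}$, pairwise at distance at least $10t(t-1)$ and away from the strip's boundary, and for each $i$ forms a \emph{local} graph $H_i$ consisting of $W_i$, the four-corner cycle $C_i$, and the $(W-A)$-bridges attaching to $W_i$ with their outside attachments deleted. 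Property~(1) of Lemma~\ref{lem:rtmesh} together with the pairwise distance makes the $H_i$ pairwise disjoint. The dichotomy is then applied to \emph{each} $H_i$: if every $H_i$ has a $C_i$-cross, the $t(t-1)$ disjoint crosses furnish all the crossings needed to build $H^1_{t(t-1)}$ with underlying grid compatible with $W$, giving $K_t$ grasped by $W$; if some $H_i$ is cross-free, Theorem~\ref{thm:crossreduct} and Lemma~\ref{lem:flatwalllike} applied to $H_i$ (not to $G-A$) give a separation which is then extended to a separation of $G-A$ witnessing that the inner $r$-subwall of $W_i$ is flat.

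Two further points where your plan needs to change to match the corrected structure. First, the counting: the $R=49152t^{24}(40t^2+r)$ bound is used to find one \emph{strip} of $40t(t-1)+r$ consecutive horizontal paths disjoint from $A\cup Z$, within which the $t(t-1)$ subwalls sit side by side; your proposed deletion of rows and columns meeting $Z$ and then finding a single $r\times r$ block does not naturally produce the $t(t-1)$ pairwise-separated subwalls the argument requires. Second, applying the cross test to $G-A$ rather than to the local $H_i$ would let a cross wander far outside the neighbourhood of $W'$, and one would lose control of where it can be routed; confining to $H_i$ (bridges of $W_i$ with attachments outside $W_i$ removed) is what makes both the $H^1_{t(t-1)}$ construction in the positive case and the hypothesis check for Lemma~\ref{lem:flatwalllike} in the negative case go through cleanly.
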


%{\bf Proof of Theorem~\ref{thm:5.1}.}
\begin{proof}
Let $t, r \ge 1$, and $W$ be given, where  $W$ is an $R$-wall in $G$,
and $R\ge 4\cdot 12288t^{24}(\rt{40}t(t-1))+r)$.
Let $d$ be a distance function on $W$.
By Lemma~\ref{lem:rtmesh} applied to the mesh $W$ and distance function $d$
we may assume that there
exist sets $A\subseteq V(G)$ and $Z\subseteq V(M)$ such that

\hardclaim{1}
{$|A|\le 12288(t(t-1))^{12}$, $|Z|\le 3\cdot  12288(t(t-1))^{12}$,
and if $x,y$ are the ends of a
$W$-path in $G-A$, then either $d(x,y)<\rt{10}t(t-1)$,
or each of $x,y$
lies at distance at most $\rt{10}t(t-1)-1$ from some vertex of $Z$.}

%$|A| \le 196608(t(t-1))^{12} $ and $|Z| \le 3\cdot 196608(t(t-1))^{12} $
%such that every end of every $W$-path in $G-A$ lies at distance at most
%$6t(t-1)$ from some vertex of $Z$.

Let the horizontal paths of $W$ be $P_0, \dots, P_R$ and the vertical paths 
$Q_0, \dots, Q_R$.  
%Let $a \ge 1$ be a positive integer.  
A \emph{strip} of $W$ is a subgraph of $W$ consisting of $\rt{40}t(t-1) + r$ 
consecutive horizontal paths of $W$, say $P_{i+1}, \dots, P_{i+\rt{40}t(t-1) + r}$, 
along with every subpath $Q$ of a vertical path of $W$ such that
$Q$ has both ends in 
$V(P_{i+1}) \cup \dots \cup V(P_{i+\rt{40}t(t-1) + r})$. 
%We say that $i$ is the \emph{height} of the strip. 
By our choice of $R$, there exists a strip $S$ consisting of paths
numbered as above such that 
%$\rt{1\le} i< R-25t(t-1)/2-r$ and 
$S$ contains no vertex of $Z \cup A$.
%Thus the condition on $i$ guarantees that the strip $S$ is surrounded by
%at least $t(t-1)/2$ horizontal paths on either side.
%
%If we delete the first $100t^2$ and final $100t^2$ horizontal paths of $S$, 
%we find an $(200t^2+r + 1)$-strip $S'$ such that any vertex $x \in S'$ 
%either satisfies $d_W(x, z) > 100t^2$ for all $z \in Z$, or alternatively, 
%there exists $y \in V(C_W)$ such that $d_W(x, y) \le 100t^2$.  
%That is, the vertices of $S'$ are far from $Z$, except for possibly those 
%vertices near the border of $W$.  
%
We conclude that there exist subwalls $W_1, \dots, W_{t(t-1)}$ 
contained in $S$ satisfying the following for all distinct 
integers $i,j=1,2,\ldots,t(t-1)$:
\begin{itemize}
\item[(2)] $W_i$ is a $\rt{20}t(t-1)+r$-wall  such that the 
horizontal paths of $W_i$ are subpaths of the horizontal paths of the
strip $S$ and the vertical
paths of $W_i$ are subpaths of the vertical paths of $S$,
%\item[2.] for all $x \in V(W_i)$ and $z \in Z$, $d_W(x,z) > 100t^2$;
\item[(3)] if $x \in V(W_i)$ and $y \in V(W_j)$, then  $d(x,y)\ge \rt{10}t(t-1)$,
and
\item[(4)] \rt{$W_i$ is disjoint from the first and last $10t(t-1)$ horizontal paths of $S$.}
\end{itemize}
%
%Fix a set $X_i$ of pegs for $W_i$.  
See Figure~\ref{fig2}.
\begin{figure}[htb]
 \centering
 \includegraphics[scale = 1]{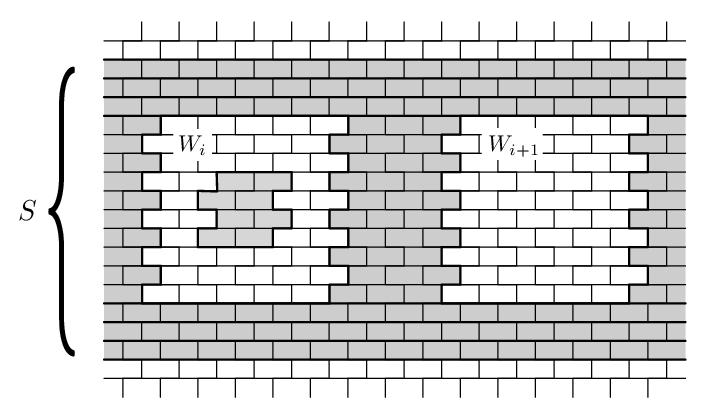}
 \caption{Subwalls of a strip.}
 \label{fig2}
\end{figure}

For $i=1,2,\ldots, t(t-1)$ we define a graph $H_i$.
Let us recall that the corners of a wall were defined at the
end of the first paragraph of Subsection~\ref{sec:weak}.
Let $C_i$ be a cycle with vertex-set the four corners of the wall $W_i$
in the order of their appearance on the outer cycle of $W_i$.
In other words, the cycle $C_i$ may be obtained from the outer cycle
of $W_i$ by suppressing all vertices, except the four corners of $W_i$.
%For $1 \le i \le t(t-1)$ we define the subgraph $H_i$ of $G-A$ as follows.
Let $B$ be a $(W-A)$-bridge in the graph $G-A$ with at 
least one attachment in $V(W_i)$, and let $B'$ be obtained from $B$
by deleting all its attachments that do not belong to $V(W_i)$.
The graph $H_i$ is defined as the union of 
the wall $W_i$, the cycle $C_i$ and all graphs $B'$ as above.
We claim that the subgraphs $H_i$ 
are pairwise disjoint.  
To see this, if there exist indices $i$ and $j$ with $i \neq j$ such that 
$H_i$ and $H_j$ share a vertex, then there exists a 
$(W-A)$-bridge with an attachment $x \in V(W_i)$ and $y \in V(W_j)$.  
However then there  exists a $W$-path in $G-A$ with ends $x$ and $y$, 
contrary to (1) and (3), 
\rt{%
because by (4) both $x$ and $y$ are at  distance at least $10t(t-1)$ from
every vertex of $Z$.}

%{\bf DEFINE CORNER}
%For each $1 \le i \le t(t-1)$, let $C_i$ be the cycle with vertex-set 
%the four corners of $W_i$ in the cyclic order determined by the 
%outer  cycle  of $W_i$.  
%Apply Theorem \ref{thm:2-path} to the society $(H_i, \Omega_i)$.  

If for all $i=1,2,\ldots,t(t-1)$ the graph $H_i$ has a $C_i$-cross, 
then
%by the condition imposed on the index on the first path of the strip,
the graph $G$ has a $H^1_{t(t-1)}$ minor  such that the underlying grid 
is compatible with the original wall $W$.  
By Lemma~\ref{lem:H1} the graph $H^1_{t(t-1)}$ has a $K_t$ minor 
grasped by the
underlying grid of $H^1_{t(t-1)}$, and hence $G$ has a $K_t$ minor grasped
by $W$ by Lemma~\ref{lem:controltrans}, as desired.

We conclude that we may assume that there exists an index $i$ such that 
the graph $H_i$ does not have a $C_i$-cross. 
By Theorem~\ref{thm:crossreduct} some $C_i$-reduction of $H_i$
can be drawn in the plane with $C_i$ bounding a face.
Let $W'$ be the $r$-wall obtained from $W_i$ by deleting the first and final 
$\rt{10}t(t-1)$ of both the horizontal and vertical paths of $W_i$, and let
$D$ be the outer cycle of $W'$.
%The set $X \cap V(W')$ forms a set of pegs for $W'$.  Denote it $X'$.  
%Note that for every pair of vertices in $x, y \in X'$, there exist four 
%internally disjoint paths from $\{x, y\}$ to $\Omega_i$ which possibly 
%intersect only at the endpoint in $\{x, y\}$.
%REFER TO THE ABOVE WHEN/IF USED
%
By Lemma~\ref{lem:flatwalllike} applied to the graph $H_i$,
wall $W'$, cycle $D$ in $W'$ and the cycle $C_i$ there exists
a separation $(X',Y)$ of $H_i$ satisfying (1)--(4) of
Lemma~\ref{lem:flatwalllike}.
Let $X:=X'\cup (V(G)\setminus A\setminus V(H_i))$.
Then $X\cap Y=X'\cap Y\subseteq V(D)$, $V(W')\subseteq V(Y)$,
$V(C_i)\subseteq X$, and some $X\cap Y$-reduction of $G[Y]$ can be
drawn in a disk with $X\cap Y$ drawn on the boundary of the disk.

We claim that $(X,Y)$ is a separation of $G-A$.
To prove this claim suppose for a contradiction that $x\in X\setminus Y$ 
is adjacent to $y\in Y\setminus X$.
Then $y\in V(H_i)$ and $x\not\in V(H_i)$, because $(X',Y)$ is a separation
of $H_i$.
We have $y\not\in V(W_i)\setminus V(W')$, for otherwise $W_i-V(W')$
includes a path from $y$ to $V(C_i)$ disjoint from $V(D)$, 
contrary to the facts that $V(C_i)\subseteq X'$, $y\in Y$ and 
$X'\cap Y\subseteq V(D)$.
It follows that the edge joining $x$ and $y$ belongs to a $(W-A)$-bridge 
of $G-A$, and hence $x$ is an attachment of that $(W-A)$-bridge outside $W_i$.
It follows that this $(W-A)$-bridge includes a $W$-path with one end $x$
and the other end say $x'\in V(W')$.
It follows that $x'$ is at distance at least $\rt{10}t(t-1)$ from $x$
and every vertex in $Z$, contrary to (1).
This proves that $(X,Y)$ is a separation of $G$.

We may choose the pegs of $W'$ in such a way that for every peg $x$
of $W'$ there exists 
a path $P$ in $W$ with one end in $C_i$,
the other end $x$, and otherwise disjoint from $W'$.
It follows that $V(P)\setminus\{x\}\subseteq X \setminus Y$, and hence $x\in X$,
as desired.

%Since $(X',Y)$ satisfies the conclusion of Lemma~\ref{lem:flatwalllike},
Thus the separation $(X,Y)$ is a witness that $W'$ is a flat wall in $G-A$.
We have $V(W')\cap A=\emptyset$, because $W'$ is a subgraph of the
strip $S$, and $S$ was chosen disjoint from $A$.
\end{proof}

\section{A flat wall theorem with few apex vertices}
\label{sec:fewapex}

In this section we prove Theorems~\ref{thm:mainvar} and~\ref{thm:mainvar3}.
The first gives an improved bound on the size of the subset $A$ of vertices.  
It also ensures that the subset $A$ of vertices is highly connected to the resulting wall $W'$, which is useful in applications.
First we need a lemma
and a definition.

\begin{Lemma}
\label{lem:flatsubwall}
Let $G$ be a graph, let $W$ be a flat wall in $G$, and let $W'$ be
a subwall of $W$ disjoint from the outer cycle of $W$.
Then $W'$ is a flat wall in $G$.
\end{Lemma}

\begin{proof}
Let $(A,B)$ be a separation witnessing  that $W$ is a flat wall in $G$.
Let $C$ be the cycle with vertex-set $A\cap B$ such that the
cyclic order of its vertex-set is the one inherited from the
cyclic order of the outer cycle of $W$.
Thus some $C$-reduction of $G[B]\cup C$ can be drawn in the plane
with $C$ bounding a face.
By Lemma~\ref{lem:flatwalllike} applied to the graph $G[B]\cup C$,
wall $W'$, the outer cycle of $W'$ and the cycle $C$ there exists
a separation $(X,Y)$ satisfying (1)--(4) of Lemma~\ref{lem:flatwalllike}.
We may select the pegs of $W'$ in such a way that for every peg $x$ of
$W'$ there exists a path with one end $x$ and the other end in $C$ that
is disjoint from $W'-x$.
Given this choice it follows that every peg of $W'$ belongs to $X$,
and hence the separation $(X,Y)$ shows that the wall $W'$ is flat
in $G$.
\end{proof}

  The next definition makes explicit what we mean by the set $A$ being highly connected to the wall.

\begin{Definition}
Let $W$ be an $r$-wall in a graph $G$ for some positive integer $r \ge 2$.  A \emph{brick} of $W$ is a cycle $C$ which forms the boundary of a finite face 
(that is, a face other than the outer face) in the natural embedding of $W$ in the plane.  Let $A \subseteq V(G)$ and assume $V(W) \cap A = \emptyset$.  A subset $A' \subseteq A$ is \emph{apex-universal for the pair $(W, A)$} if for all $a \in A'$ and for all bricks $C$ of $W$, there exists a path with one end in $V(C)$, one end equal to $a$ which is internally disjoint from $V(W) \cup A$.  
%We say two bricks $C_1$ and $C_2$ are \emph{independent} if there does not exist a brick $C'$ of $W$ such that $C' \cap C_i \neq \emptyset$ for $i = 1,2$.  
If $A$ is apex-universal for $(W,A)$, then we just say that $A$ is {\em apex-universal} for $W$.
\end{Definition}

We now give the strengthening of Theorem \ref{thm:main}.  

\begin{Theorem}
\label{thm:mainvar2}
Let $t \ge 5$ and $r \ge 3\lceil\sqrt t\rceil$ be integers.
Let $n=12288t^{24}$, $R=r^{2^n}$ and
%$R=r^{2^{ (12288t^{24})}}$ 
 $R_0=\yy{49152t^{25}(\rt{40}t+R)}$.
Let $G$ be a graph, and
let $W_0$ be an $R_0$-wall in $G$.
Then either $G$ has a model of a $K_t$ minor grasped by $W_0$, or there exist
a set $A\subseteq V(G)$ of size at most $t-5$ and an $r$-subwall
$W$ of $W_0$ such that $V(W)\cap A=\emptyset$, $W$ is a flat wall
in $G-A$ and $A$ is apex-universal for $W$.
\end{Theorem}

\begin{proof}
By Theorem~\ref{thm:main} we may assume that there exists
a set $A_0\subseteq V(G)$ of size at most $n=12288t^{24}$ and an
$\yy{Rt}$-subwall $\yy{W_1}$ of $W_0$ such that $V(\yy{W_1})\cap A_0=\emptyset$ and
$\yy{W_1}$ is a flat wall in $G-A_0$.
\yy{%
Let $W$ be a subwall of $W_1$ obtained by selecting every $t^{\hbox{th}}$  horizontal and every
$t^{\hbox{th}}$ vertical path of $W_1$.
}%

We fix a subwall $W'$ of $\yy{W}$ and subsets $A' \subseteq \bar{A} \subseteq A_0$ such that 
\begin{itemize}
\item[(1)] $W'$ is a $r^{2^{|\bar{A}| - |A'|}}$-subwall of $\yy{W}$, 
\item[(2)] $W'$ is flat in $G-\bar{A}$, and
\item[(3)] the subset $A'$ is apex-universal for $(W', \bar{A})$.
%\yy{%
%\item[(4)] between every two horizontal paths of $W'$ you there exists at least $t-1$ horizontal paths
%}%
\end{itemize}
Moreover, we pick $W'$, $A'$, and $\bar{A}$ satisfying (1)-(3) to minimize $|\bar{A}| - |A'|$.  Note that such 
a choice exists by setting $W' = W$, $A' = \emptyset$, and $\bar{A} = A$.  

We claim that $\bar{A}=A'$. To prove that
assume for a contradiction that $\bar{A} \neq A'$.  We define a subwall $W^*$ of $W'$ as follows.  Let $k = r^{2^{|\bar{A}| - |A|-1}}$; thus $W'$ is
a $k^2$-wall.  
Let the vertical and horizontal paths of $W'$ be $V_1, \dots, V_{k^2}$ and $H_1, \dots, H_{k^2}$, respectively.  Let $W^*$ be the $k$-subwall of $W'$ whose horizontal and vertical paths are subpaths of $\{H_{2+i(k-1)}: 1 \le i \le k\}$ and $\{V_{2+i(k-1)}: 1 \le i \le k\}$.  Note that $W^*$ does not intersect the outer cycle of $W$, which will allow us to apply Lemma \ref{lem:flatsubwall} later.  
Exactly one component of $W' -V(W^*)$  contains the outer cycle of $W'$, and every brick of $W^*$ is the outer cycle of a $k$-subwall of $W'$.  
Let $W_1, \dots, W_{(k-1)^2}$ be these $k$-subwalls of $W'$.
% each contained in a distinct component of $W' - V(W^*)$ which does not contain the outer cycle of $W'$.   

%If we consider the components of $W' -V(W^*)$, exactly one contains the outer cycle of $W'$, and each of the other components contains a $k$-subwall of $W'$.  %Let $W_1, \dots, W_{(k-1)^2}$ be the $k$-subwalls of $W'$, each contained in a distinct component of $W' - V(W^*)$ which does not contain the outer cycle of %$W'$.   

Fix a vertex $a \in \bar{A} \setminus A'$.  Assume, as a case, that for all $i\in\{1,2,\ldots,(k-1)^2\}$, there exists a path $P_i$ with one end equal to $a$, one end in $V(W_i)$ and internally disjoint from $V(W') \cup \bar{A}$.  Then we claim that $A' \cup \{a\}$ is apex-universal for $(W^*, \bar{A})$.  
Fix a brick $C$ of $W^*$; let  $i\in\{1,2,\ldots,(k-1)^2\}$ be such that $C$ is the outer cycle of $W_i$.
%If we consider the natural plane embedding of $W'$, then there exists a wall $W_i$ contained in the disk bounded by the cycle $C$ by the definition of the wall %$W^*$.  
Thus, by extending $P_i$ through $W_i$, we can find a path from $V(C)$ to $a$ with no internal vertex in $V(W^*) \cup A$.  
Similarly, if  $a' \in A'$, then there exists a path $P'$ from $a'$ to some (in fact, every) brick of $W_i$ with no internal vertex in $V(W) \cup A$.  Thus, again we can extend $P'$ through $W_i$ to find a path from $V(C)$ to $a'$ which has no internal vertex in $V(W^*) \cup A$.  We conclude that $A' \cup \{a\}$ is apex-universal for $(W^*, \bar{A})$.  
It follows now by Lemma \ref{lem:flatsubwall} that $W^*$, $A' \cup \{a\}$, and $\bar{A}$ satisfy (1)--(3), contrary to our choice to minimize $|\bar{A}| - |A'|$.

Thus there exists an index $i\in\{1,2,\ldots,(k-1)^2\}$ such that there does not exist a path with one end equal to $a$ and one end in $V(W_i)$ which is internally disjoint from $V(W') \cup \bar{A}$.  As every brick of $W_i$ is a brick of $W'$, we see that $A'$ is apex-universal for $(W_i, \bar{A} \setminus \{a\})$.  We claim as well that $W_i$ is flat in $G- (\bar{A} \setminus \{a\})$.  By Lemma \ref{lem:flatsubwall}, $W_i$ is flat in $G-\bar{A}$.  Let $(X,Y)$ be a separation of $G-\bar{A}$ as in the definition of flat wall, chosen with $|Y|$ minimum.
The minimality of $Y$ implies that for every $y\in Y \setminus X$ there exists
a path in $G[Y]-X$ with one end $y$ and the other end in $V(W_i)$.  Note that $W' - V(W_i)$ is connected; it follows that $V(W') \setminus V(W_i)$ is contained in $X$.  We conclude that $a$ has no neighbor in $Y \setminus X$, lest there exist a path from $a$ to $W_i$ avoiding the vertices of $V(W') \setminus V(W_i)$. 
Consequently, $(X \cup \{a\}, Y)$ is a separation of $G- (\bar{A} \setminus \{a\})$ that proves that the wall $W_i$ is flat in $G-(\bar{A} \setminus \{a\})$.  
It follows that $W_i$, $A'$, and $\bar{A} \setminus \{a\}$ satisfy (1)--(3), again contrary to our choice.
This proves our claim that $\bar{A}=A'$.

We conclude that $W'$ is an $r$-subwall of $W$ which is flat in $G-\bar{A}$.  To complete the proof, it suffices to show that $|\bar{A}| \le t-5$.  Assume not, and that $|\bar{A}| \ge t-4$; let $a_1, \dots, a_{t-4}$ be $t-4$ distinct vertices in $\bar{A}$.  
%Fix $(X, Y)$ to be a separation of $G-\bar{A}$ as in the definition of a flat wall.  
By the assumption that $r \ge 3\lceil\sqrt t\rceil$ we can choose bricks $C_1,C_2,\ldots,C_t$ in $W'$ such that each of them is disjoint from the outer
cycle of $W'$ and every two distinct bricks in the family are separated by a vertical or horizontal path of $W'$.
For $i\in \{1,2,\ldots,t\}$ and $x\in \bar{A}$ there exists
a path $P^i_x$ from $x$ to $V(C_i)$, internally disjoint from $V(W')\cup \bar{A}$.
For $i\in  \{1,2,\ldots,t\}$ let $X'_i$ be the union of
$V(C_i)$ and all the sets $V(P^i_x)\setminus \bar{A}$ for $x\in \bar{A}$.
For $i\in  \{1,2,\ldots,t-4\}$ let $X_i=X_i'\cup\{a_i\}$, and for
$i\in\{t-3,t-2,t-1,t\}$ let $X_i=X'_i$.
%$V(W_i)$ and all the sets $V(P^i_x)\setminus (A_1\setminus\{f(i)\}$ for
%$x\in A_1$.
%For $i\in\{i_1,i_2,i_3,i_4\}$ let $X'_i$ be the union of
%$V(W_i)$ and all the sets $V(P^i_x)\setminus A_1$ for $x\in A_1$,
%and for $i\in I\setminus\{i_1,i_2,i_3,i_4\}$ let $X_i$ be the union of
%$V(W_i)$ and all the sets $V(P^i_x)\setminus (A_1\setminus\{f(i)\}$ for
%$x\in A_1$.
The sets $X_i$ induce connected graphs, and we claim that they are pairwise
disjoint. Indeed, to see that it suffices to argue that
for distinct $i,j\in  \{1,2,\ldots,t\}$ and not necessarily distinct $x,y\in \bar{A}$
the paths $P^i_x-x$ and $P^j_y-y$ are disjoint.
But if those two paths intersect, then there exists a path $P$ in $G-\bar{A}$
from $C_i$ to $C_j$ that is internally disjoint from $W'$.
However, the existence of $P$ contradicts the flatness of $W'$.
To see this, let $(X,Y)$ be a separation of $G-\bar{A}$ as in the definition
of flat wall, and let $s_1,s_2,t_1,t_2\in X\cap Y$ be distinct vertices
appearing on the outer cycle of $W'$ in the order listed.
It follows that $W'\cup P$ has two disjoint paths, one with ends
$s_1$ and $t_1$, and the other with ends $s_2$ and $t_2$.
However, that contradicts the fact that some $X\cap Y$-reduction
of G[Y] can
be drawn in a disk with the vertices $s_1,s_2,t_1,t_2$ drawn on the
boundary of the disk in order.
This proves that the sets $X_i$ are pairwise disjoint.

The sets $X_i$ can be modified,
\yy{%
using the vertical and horizontal paths of $W_1$ that are not part of $W$,
}%
 to give model of a $K_t$ minor
grasped by $W'$, and hence grasped by $W_0$.
The only thing that is missing are edges between the sets
$X_{t-3},X_{t-2},X_{t-1},X_{t}$, and those can be supplied
by enlarging these sets
using horizontal and vertical paths of $W'$ that are disjoint from
all the cycles $C_i$. We omit the details, which are easy.
\end{proof}

Let $G$ be a graph, let $C$ be a cycle in $G$, and let $J$ be a $C$-reduction
of $G$ obtained by successively performing elementary $C$-reductions determined
by separations $(A_1,B_1), (A_2,B_2),\allowbreak\ldots,(A_k,B_k)$ in the order listed.
More precisely, let $G_0:=G$, for $i=1,2,\ldots,k$ let $G_i$
be obtained from $G_{i-1}$ by the elementary $C$-reduction determined by
$(A_i,B_i)$, and let $J=G_k$.
If $J$ can be drawn in the plane with $C$ bounding a face, then
we say that $(B_1,B_2,\ldots,B_k)$ is a
\emph{$C$-reduction sequence}\?{$C$-reduction sequence} for $G$.
Given a $C$-reduction sequence $(B_1,B_2,\ldots,B_k)$, the original
separations may be recovered by letting $A_i$ be the set of all
vertices $v\in V(G_{i-1})$  such that either $v\not\in B_i$, 
or $v\in B_i$ and $v$ has a neighbor in $V(G_{i-1})\setminus B_i$.

We now prove Theorem~\ref{thm:mainvar3}, which we restate.

\begin{Theorem}
\label{thm:mainvar4}
Let $r\ge2$ and $t \ge 5$ and 
%$rt \ge 3\lceil\sqrt t\rceil$ 
be integers,
let $n=12288t^{24}$ and
%, $R=(rt)^{2^n}$ and
%$R=r^{2^{ (12288t^{24})}}$ 
 $R_0=49152t^{24}(\rt{40}t^2+(rt)^{2^n})$ and let $G$ be a graph
with no $K_t$ minor.
If $G$ has an $R_0$-wall,
then there exist
a set $A\subseteq V(G)$ of size at most $t-5$ and an $r$-wall
$W$ in $G$ such that $V(W)\cap A=\emptyset$ and $W$ is a flat wall
in $G-A$.
Furthermore, if $(X,Y)$ is a separation as in the definition of flat wall,
then the graph $G[Y]$ has no $(R_0+1)$-wall.
\end{Theorem}

\begin{proof}
There exists a separation $(X_0,Y_0)$ of $G$ of order at most $t-2$
such that the graph $G[Y_0]$ has an $R_0$-wall, because the separation
$(\emptyset,V(G))$ has said property.
We may choose such a separation such that $Y_0$ is minimal with
respect to inclusion.
Let $G_0$ denote the graph $G[Y_0]$.
By Theorem~\ref{thm:mainvar2} applied to the graph $G_0$, an
$R_0$-wall in $G_0$ and the integer $rt$ in place of $r$
 we may assume that  there exist
a set $A\subseteq V(G_0)$ of size at most $t-5$ and an $rt$-wall
$W$ in $G_0$ such that $V(W)\cap A=\emptyset$ and $W$ is a flat wall
in $G_0-A$.
Let $(X_0',Y_0')$ be a separation as in the definition of flat wall.

\rt{%
Let us select $W, X_0', Y_0'$  as stated in the previous paragraph, 
and subject to that in such a way that} 
$Y_0'$ is minimal with respect to inclusion.
Let $W_1,W_2,\ldots,W_t$ be disjoint $r$-subwalls of $W$ such that
each  is disjoint from the outer cycle of $W$ and every two of them are
separated by a vertical or horizontal path of $W$.
Let $i=1,2,\ldots,t$. By Lemma~\ref{lem:flatsubwall} the wall $W_i$
is flat in $G_0-A$; let $(A_i,B_i)$ be the corresponding separation.
We claim that the sets $B_1,B_2,\ldots,B_t$ are pairwise disjoint.
Indeed, otherwise there exists a $W$-path in $G_0-A$ with ends in
different subwalls $W_i$, a contradiction similarly as in the proof
of Theorem~\ref{thm:wst} or Theorem~\ref{thm:mainvar2}.
Since $|X_0\cap Y_0|\le t-2$ we may assume that $B_1$ is
disjoint from $X_0\cap Y_0$.
It follows that $(A_1\cup X_0,B_1)$ is a separation of $G-A$,
and hence the wall $W_1$ is flat in $G-A$.

It remains to show that $G[B_1]$ has no $(R_0+1)$-wall.
To that end suppose for a contradiction that $W_0$ is an $(R_0+1)$-wall
in $G[B_1]$, let $Y_1,Y_2,\ldots,Y_k$ be an $A_1\cap B_1$-reduction
sequence for $G[B_1]$, and let $(X_i,Y_i)$ be the corresponding separations.
Thus $Y_i\subseteq B_1$ for every $i=1,2,\ldots,k$.
We may assume that the $A_1\cap B_1$-reduction sequence is chosen with
$k$ maximum.
For each $i=1,2,\ldots,k$ either 
every vertex of $W_0$ of degree three except possibly one
belongs to $X_i$, or  
every vertex of $W_0$ of degree three except possibly one
belongs to $Y_i$.
Let us assume first that the latter holds for some index $i\in\{1,2,\ldots,k\}$.
Then $G[Y_i]$ has an $R_0$-wall (a subwall of $W_0$) and
$(V(G)\setminus(Y_i\setminus X_i),Y_i\cup A)$ is a separation of $G$
of order at most $t-2$ that contradicts the choice of $(X_0,Y_0)$, because
$A\subseteq Y_0$, $Y_i\subseteq B_1\subseteq Y_0$ and
at least one corner of $\zz{W_1}$ belongs to $Y_0\setminus Y_i$.
It follows that for each $i=1,2,\ldots,k$  
every vertex of $W_0$ of degree three except possibly one
belongs to $X_i$.

Let $J$ be the $A_1\cap B_1$-reduction of $G[B_1]$ that arises by applying
the $A_1\cap B_1$-reduction sequence $Y_1,Y_2,\ldots,Y_k$. We may assume
that $J$ is drawn in a disk $\Delta$ in such a way that the vertices of $A_1\cap B_1$
are drawn on the boundary of $\Delta$ in the order determined by the outer 
cycle of $W_1$.
Since the graph $G[B_1]$ has the $R_0$-wall $W_0$, it has an $R_0\times R_0$-grid
minor such that no branch set of the minor 
\rt{that corresponds to a vertex of degree four of the grid minor}
is a subset of $Y_i\setminus X_i$. 
\rt{With the possible exception of vertices of the outer cycle}
such a grid minor is preserved under the
$A_1\cap B_1$-reductions, which implies that $J$ has an $(R_0-2)\times (R_0-2)$-grid
minor, and hence an $(R_0/2-1)$-wall.
Thus $J$ has an $rt$-wall $W'$ such that the face bounded by the outer cycle
of $W'$ includes the boundary of $\Delta$.
Let $D'$ be the outer cycle of $W'$.
It follows from the maximality of $k$
that there exist four internally disjoint paths in $J$ from $W'-V(D')$ to $A_1\cap B_1$
with distinct ends in $A_1\cap B_1$.
By changing \zz{the paths} if necessary we may assume that each of these four paths
intersects $D'$ in a path.
Let $W''$ be an $rt$-wall in $G[B_1]$ obtained by
\rt{%
converting the wall $W'$ into one in $G[B_1]$.
This is done mostly by replacing edges of $W'$
that do not belong to $G$ by corresponding paths in $G[Y_i]$ for some
$i\in\{1,2,\ldots,k\}$. 
Likewise, the four internally disjoint paths in $J$ can be converted to paths in $G[B_1]$.}
By Lemma~\ref{lem:flatwalllike} the wall $W''$ is flat in \zz{$G[B_1]$, and hence in} $G_0-A$;
thus the corresponding separation
contradicts the choice of $(X_0',Y_0')$.
Thus $G[B_1]$ has no $(R_0+1)$-wall, as desired.
\end{proof}

%\newpage
%%%%%%%%%%%%%%%%%%%%%%%%%%%%%%%%%%%%%%
\section{An Algorithm}
\label{sec:algo}

We need algorithmic versions of Lemmas~\ref{lem:spreadpaths} 
and~\ref{lem:diversepaths}.
In order for those algorithms to run efficiently we need to make
some assumptions about the computability of the relation $R$.
It seems best to do so in the context of our application, namely
when $M$ is a mesh in the graph $G$ and
$(x,y)\in R$ if and only if $d(x,y)<l$ for some integer $l$, where
$d$ is a distance function on $M$.
Let us recall that the notion of a distance function was defined
at the beginning of Section~\ref{sec:mesh} by saying that
$d(x,y)$ is the distance of $f(x)$ and $f(y)$ in $H$, where $H$
is a grid minor of $M$ and $f:V(M)\to V(H)$ describes the contraction.
We will refer to $f:V(M)\to V(H)$ as a
\emph{grid contraction function}\?{grid contraction function}.
It is clear that given a grid contraction function $f$, the value
$d(x,y)$ can be computed in constant time for any $x,y\in V(M)$.
Thus we will use a grid contraction function to represent the distance
function on $M$.
We  assume that for each $x\in V(M)$ we store the value $f(x)$,
and that for each $u\in V(H)$ we store $f^{-1}(u)$ as a list.

Let an integer $l\ge 0$ be fixed, and let $(x,y)\in R$ if and only 
if $d(x,y)<l$.
We need to clarify one issue about the sets $R(x)$.
Let us recall that $R(x)$ denotes
the set of all $y\in X$ such that $(x,y)\in R$.
%Let $P_1,P_2,\ldots,P_r$ be the horizontal paths of $M$, and let
%$Q_1,Q_2,\ldots,Q_s$ be the vertical paths, and let $x_0$ be the
%unique vertex in $V(P_1)\cap V(Q_1)$.
%We will call $x_0$ a \emph{corner}\?{corner} of $M$.
%Then $R(x_0)$ includes the vertex-sets of $P_i$ for
%$i\in\{1,2,\ldots,l-1,r-l+2,r-l+1,\ldots,r\}$ and $Q_j$ for
%$j\in\{1,2,\ldots,l-1,s-l+2,s-l+1,\ldots,s\}$.
%On the other hand, 
If $x\in V(M)$, then
$R(x)$ can be written as $\bigcup_{v\in V_1\cup V_2}f^{-1}(v)$ for some
sets $V_1,V_2\subseteq V(H)$, where $|V_1|\le(2l-1)^2$
and $V_2$ is the union of the vertex-sets of at most $2l-1$ vertical 
and at most $2l-1$ horizontal paths of $H$.
To see this let $V_1$ be the set of all vertices $v\in V(H)$ such that
there is a curve in the plane connecting $v$ and $f(x)$ that intersects
$H$ at most $l$ times and does not use the outer face of $H$,
and $V_2$ is defined analogously using curves that use the outer face of $H$.

The following is an algorithmic version of Lemma~\ref{lem:spreadpaths}.
The conclusion is slightly weaker in order to save on running time.

\begin{Lemma}
\label{alg:spreadpaths}
There exists an algorithm with the following specifications.\\
{\bf Input:} A graph $G$ on $n$ vertices and $m$ edges, 
 integers $k,l\ge1$, and a mesh $M$ in $G$ with grid contraction function
$f:V(M)\to V(H)$ giving rise to a distance function $d$ on $M$.
For $x,y\in V(M)$ let $(x,y)\in R$ if and only if $d(x,y)<l$.\\
%and a reflexive and symmetric relation $R$ on $X$ satisfying (R1) and (R2).\\
{\bf Output:} Either $k$ disjoint $R$-semi-dispersed $M$-paths,  or
sets $A \subseteq V(G)$ and $Z \subseteq V(M)$ with $|A| \le k-1$
and $|Z| \le 3k-3$ such that every $M$-path $P$ in
$G-A$ with ends $x$ and $y$ either satisfies $d(x, y)\le 2l-2$ or both
$x, y \in \bigcup_{z \in Z} R(z)$.\\
{\bf Running time:} $O(\xx{\min\{n,k\}}m+n)$.
\end{Lemma}

\begin{proof}
We may assume that $G$ has no isolated vertices (by deleting them).
%Let $x_0$ be as above; that is, the unique vertex that belongs to
%the first vertical and the first horizontal path of $M$.
%Let $M$ be $r \times s$-mesh. 
%If $l\ge r$ or $l\ge s$, then 
If $l$ is at least the number of vertical or horizontal paths in $M$,
then $A:=\emptyset$ and any one-element set $Z\subseteq V(M)$
(or $Z=\emptyset$ if $k=1$ and no $M$-path with ends far apart exists)
satisfy the second condition of the output requirement.
Thus we may assume that $l^2=O(m)$.

The algorithm will proceed in at most $3k$ iterations.
At the beginning of each iteration there will be $M$-paths 
$P_1,P_2,\ldots,P_s$ and $Q_1,Q_2,\ldots,Q_p$ as in the proof 
of Lemma~\ref{lem:spreadpaths} with ends denoted in the same way.
Let $A,Z,W$ be defined as in the proof of Lemma~\ref{lem:spreadpaths}.
%but we define $W$ slightly differently.
%Throughout the algorithm $W$ will be equal to
%$R(x_0)\cup  \bigcup_{i=1}^p R(w_i)
%\cup \bigcup_{i=1}^s \left( R(x_i) \cup R(y_i)\right)$.
%We will compensate for this change by adding $x_0$ to $Z$ at the end.
At the start of the first iteration we have $s=p=0$;
thus $A=Z=W=\emptyset$.
% and  $W=R(x_0)$.
Throughout the algorithm the set $W$ will be of the form 
$\bigcup_{v\in V}f^{-1}(v)$ for some
$V\subseteq V(H)$, and will be presented by marking the elements of $V$.

%presented by marking each of its elements.
%Throughout the algorithm we will maintain the property that membership
%in $X'$ can be tested in constant time. 
%This is certainly true at the beginning of the first iteration.

For the purpose of this paragraph and the next let us say that
a \emph{good path} is 
%At the beginning of each iteration we test whether there exists
an $M$-path $S$ in $G-A$ with ends $x,y$,
where $x\in V(M) \setminus W$ and $(x,y)\not\in R$.
We say that $S$ is \emph{very good} if it is good and $d(x,y)\ge 2l-1$.
At the beginning of each iteration we either find a good path,
or establish that no very good path exists.
We do so by running the following subroutine for every $M$-bridge
$B$ of the graph $G-A$.
In the subroutine
we first test whether $B$ has an attachment $x\in V(M) \setminus W$.
If not, then $B$ does not include a good
path and we return that information.
Otherwise we test whether $B$ has an attachment $y$ at distance at least
$l$ from $x$;
if we find one, then a path in $B$ from $x$ to $y$ is a good path,
and we return it.
On the other hand, if all attachments of $B$ belong to $R(x)$, then
$B$ includes no very good path, and we return that information.
This completes the description of the subroutine. 
It is clear that each call takes time $O(|E(B)|)$, and that if no 
call to the subroutine returns a good path, then no very good path exists.
Thus we either find a good path,
or establish that no very good path exists
in time $O(m)$.

If no very good path exists, then the sets $A$ and $Z$ satisfy the 
specifications of the algorithm.
% by the proof of Lemma~\ref{lem:spreadpaths}.
We output those sets and terminate the algorithm.
If we find a good path $S$, then we modify the paths $P_i$ and $Q_i$
as in the proof of Lemma~\ref{lem:spreadpaths}
by either adding a new path $P_{s+1}$ and keeping \xx{all but one of}
the old paths $Q_i$, 
or by adding two new paths $P_{s+1},P_{s+2}$ and
discarding one old path \xx{$P_i$ and one old path} $Q_i$, or by adding a new path $Q_{p+1}$.
In each case the quantity $2s+p$ increases by one.
We update the sets $A$, $Z$ and $W$.
The set $W$ will be updated by marking $f(v)$ for every vertex $v$
that is being added to $W$. 
For every vertex that is being added to $Z$ this involves marking
at most $(2l-1)^2$ vertices of $H$ and the vertex-sets of at most
$2(l-1)$ vertical and at most $2(l-1)$ horizontal paths of $H$.
\xx{Similarly, we unmark vertices that are being deleted from $W$.}
The marking of vertical and horizontal paths will be done implicitly,
so that the total time spent on marking during each iteration will be $O(l^2)$.
%, where $W$ is updated by marking
%the vertices that are being added to it, with the proviso that if
%the vertex of an entire horizontal or vertical path is to be added,
%we make sure it has not been added previously (by noting such an addition
%the first time it is made).
If $s\ge k$ we output the paths $P_1,P_2,\ldots,P_k$ 
and terminate the algorithm;
otherwise we go to the next iteration.
The second step of the iteration described in this paragraph takes time
$O(l^2+n)=O(m)$.

Since the quantity $\xx{|Z|=2s+p}$ increases during each iteration and $p\le s$,
the algorithm will terminate after at most $\xx{\min\{n,3k\}}$ iterations.
Thus the running time is as claimed.
\end{proof}

Likewise there is a  version of Lemma~\ref{lem:diversepaths} with
a similar proof, which we omit.

\begin{Lemma}
\label{alg:diversepaths}
There exists an algorithm with the following specifications.\\
{\bf Input:} A graph $G$ on $n$ vertices and $m$ edges,
 integers $k,l\ge0$, and a mesh $M$ in $G$ with grid contraction function
$f:V(M)\to V(H)$ giving rise to a distance function $d$ on $M$.
For $x,y\in V(M)$ let $(x,y)\in R$ if and only if $d(x,y)<l$.\\
{\bf Output:} Either $k$ disjoint $R$-dispersed $M$-paths,  or
sets $A \subseteq V(G)$ and $Z \subseteq V(M)$ with $|A| \le k-1$
and $|Z| \le 3k-3$ such that for every $M$-path $P$ in
$G-A$ its  ends can be denoted by $x$ and $y$ such that either
$d(x, y)\le 2l-2$ or  
$x \in \bigcup_{z \in Z} R(z)$.\\
{\bf Running time:} $O(\xx{\min\{n,k\}}m+n)$.
\end{Lemma}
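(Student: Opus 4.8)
The plan is to mimic the algorithm of Lemma~\ref{alg:spreadpaths}, making exactly the modifications by which the proof of Lemma~\ref{lem:diversepaths} is obtained from that of Lemma~\ref{lem:spreadpaths}. As a preprocessing step I would delete the isolated vertices of $G$ (so that $n=O(m)$) and dispose of the trivial case in which $l$ is at least the number of horizontal or of vertical paths of $M$ --- there $A:=\emptyset$ together with any one-element $Z\subseteq V(M)$ (or $Z:=\emptyset$ when $k=1$ and no $M$-path of $G$ has ends at distance at least $l$) already fulfils the second alternative --- so henceforth $l^2=O(m)$. The algorithm then runs in at most $3k$ iterations. It maintains, exactly as in the proof of Lemma~\ref{lem:diversepaths}, pairwise disjoint $M$-paths $P_1,\dots,P_s$ that are now $R$-\emph{dispersed} (rather than $R$-semi-dispersed) and auxiliary paths $Q_1,\dots,Q_p$ with $p\le s$, together with the associated sets $A=\{a_1,\dots,a_p\}$ and $Z$ consisting of the ends of the $P_i$ and the $M$-ends of the $Q_i$, and the set $W=\bigcup_{z\in Z}R(z)$. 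As in Lemma~\ref{alg:spreadpaths}, $W$ is stored by marking the corresponding vertices of $H$, so membership in $W$ of a vertex $x\in V(M)$ is tested in constant time by inspecting whether $f(x)$ is marked, and each new vertex placed in $Z$ costs $O(l^2)$ marks (at most $(2l-1)^2$ isolated vertices of $H$ plus, implicitly, $O(l)$ horizontal and $O(l)$ vertical paths of $H$, using the description of $R(x)$ preceding Lemma~\ref{alg:spreadpaths}).

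At the start of each iteration the algorithm searches for a \emph{good path}, meaning here an $M$-path $S$ in $G-A$ whose two ends $x,y$ both lie in $V(M)\setminus W$ and satisfy $d(x,y)\ge l$; this is the algorithmic incarnation of the long $(M-A-W)$-path used in the proof of Lemma~\ref{lem:diversepaths}. It does so by running, over every $(M-A)$-bridge $B$ of $G-A$, an $O(|E(B)|)$-time subroutine: form the set $T$ of attachments of $B$ lying outside $W$; if $|T|\le 1$ report that $B$ contains no good path; otherwise fix $x\in T$ and test whether some $y\in T\setminus\{x\}$ has $d(x,y)\ge l$, in which case an $M$-path of $B$ from $x$ to $y$ is returned as a good path. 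If no such $y$ exists then $T\subseteq R(x)$, so by the triangle inequality for the pseudometric $d$ every two members of $T$ are at distance at most $2l-2$, and hence $B$ contains no $M$-path with both ends outside $W$ at distance at least $2l-1$. Since every $M$-path of $G-A$ has all its edges in a single $(M-A)$-bridge, if no call returns a good path then every $M$-path $P$ of $G-A$ with both ends outside $W$ lies in a bridge $B$ with both ends in the corresponding set $T$, whence those ends are at distance at most $2l-2$; as $\bigcup_{z\in Z}R(z)=W$, the sets $A,Z$ then satisfy the output specification, and we output them and halt. The whole scan costs $O(m)$ per iteration.

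If some call returns a good path $S$, I would modify $P_1,\dots,P_s,Q_1,\dots,Q_p$ exactly by the case analysis in the proof of Lemma~\ref{lem:diversepaths}: tracing $S$ from one end to the first vertex it meets on $\bigcup_i V(P_i)\cup\bigcup_i V(Q_i)$, one either appends $S$ as a new path $P_{s+1}$ (when $S$ avoids all $P_i,Q_j$), or merges the trace with some $Q_i$ into a new $P_{s+1}$, or uses the trace together with $Q_i$ and $P_i$ ($i\le p$) to replace $P_i$ by two disjoint long $M$-paths while discarding $Q_i$, or appends $S$ as a new path $Q_{p+1}$. In every case the potential $2s+p$ strictly increases, and since both ends of $S$ lie outside $W$ --- hence at distance at least $l$ from every previous end --- the enlarged family $P_1,\dots$ remains $R$-dispersed. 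The sets $A$, $Z$, $W$ are updated in $O(l^2)$ time for the marks plus $O(n)$ time to recompute the $(M-A)$-bridges, i.e.\ $O(m)$ in total. If now $s\ge k$ we output $P_1,\dots,P_k$ and halt; otherwise we proceed to the next iteration. Since $p\le s$ throughout and $2s+p$ increases by at least one per iteration while $s$ stays below $k$, there are at most $3k$ iterations, each costing $O(m)$, for total running time $O(km+n)$.

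The combinatorial content --- that a good path always yields an augmentation keeping the $P_i$ disjoint and $R$-dispersed, and that the absence of a very good path certifies the second alternative --- is verbatim the proof of Lemma~\ref{lem:diversepaths}, so I expect no essential difficulty there. The only points requiring care, and the place where the proof is not completely routine, are the bookkeeping issues inherited from Lemma~\ref{alg:spreadpaths}: representing $W$ via marks on $V(H)$ and marking horizontal and vertical paths of $H$ implicitly, so that an iteration costs only $O(m)$ rather than a factor of $l$ or $k$ more, and pinning down the exact $2l-2$ slack in the stopping condition, which is what makes the per-bridge test cheap and arises solely from one application of the triangle inequality inside a single ball $R(x)$.
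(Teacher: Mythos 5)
The paper does not actually supply a proof of Lemma~\ref{alg:diversepaths}; it dismisses it with the single sentence ``Likewise there is a version of Lemma~\ref{lem:diversepaths} with a similar proof, which we omit.'' Your reconstruction is exactly the intended argument: you carry over the algorithm of Lemma~\ref{alg:spreadpaths} wholesale and make precisely the two changes by which Lemma~\ref{lem:diversepaths} differs from Lemma~\ref{lem:spreadpaths} (maintain $R$-dispersed rather than $R$-semi-dispersed paths, and require a ``good path'' to have \emph{both} ends outside $W$), while keeping the bookkeeping of $W$ via marks on $V(H)$, the per-bridge $O(|E(B)|)$ scan, the $2l-2$ slack from one triangle inequality inside a ball, and the $2s+p$ potential bounding the iteration count by $3k$. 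This is correct and is essentially the same approach the paper intends.

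One small observation in your favor: in the $R$-dispersed setting the requirement that both ends of $S$ avoid $W$ makes the augmentation step cleaner than in Lemma~\ref{alg:spreadpaths} itself. When a disjoint good path $S$ is appended as $P_{s+1}$, both new ends lie outside $W$, so they are automatically at distance at least $l$ from every $w_j$, and condition (a) for the surviving $Q_j$'s holds without further adjustment — a point that is more delicate in the semi-dispersed version, where only one end of $S$ is guaranteed to avoid $W$. You did not flag this, but it is a reason the ``same proof'' really does go through smoothly here.
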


\begin{Lemma}
\label{alg:grida}
There is an algorithm with the following specifications.\\
{\bf Input:} A graph $G$ on $n$ vertices and $m$ edges, 
an integer $t\ge2$, a mesh in $G$ with grid
contraction function $f:V(M)\to V(H)$ giving rise to a distance function
$d$ on $M$,
%$k= 32(t(t-1))^6$
a set $X \subseteq V(M)$ with
$|X| = 64(t(t-1))^6$ such that $d(x, y) \ge 2t(t-1)$ for all  $x, y \in X$,
and a matching $F\subseteq E(G) \setminus E(M)$ in $G$ of size $32(t(t-1))^6$ with
vertex-set $X$.\\
{\bf Output:} A model of $K_t$ grasped by $M$.\\
{\bf Running time:} $O(m+n)$.
\end{Lemma}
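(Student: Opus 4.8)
The algorithm follows the proof of Lemma~\ref{lem:grida} line by line, carrying explicit branch sets at every stage; the content is that each step can be implemented in $O(m+n)$ time. First I would build the minor $G'$ of $G$ obtained by contracting, for each $u\in V(H)$, the set $f^{-1}(u)$ to a single vertex and deleting any parallel edges this creates among the images of $M$, so that $M$ becomes the grid $H$. This costs $O(m+n)$ time; the edges of $F$ survive (they are not edges of $M$), and since the ends of $F$ lie in $X$ and are pairwise at distance at least $2t(t-1)>0$ they map to distinct vertices of $H$, so $F$ becomes a matching $F'$ in $G'$ of the same size on the vertex set $f(X)$, a set of $64(t(t-1))^6$ vertices of $H$ pairwise at distance at least $2t(t-1)$. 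Conversely, any model of $K_t$ in $G'$ grasped by $H$ pulls back to a model of $K_t$ in $G$ grasped by $M$ --- this is the algorithmic content of Lemma~\ref{lem:controltrans} --- by replacing each vertex of $V(H)$ that occurs in a branch set by its $f$-preimage; connectivity is preserved because each $f^{-1}(u)$ induces a connected subgraph, and the pull-back costs $O(n)$. Hence it suffices to carry out the grid case.

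In the grid case I would execute the chain of reductions in the proof of Lemma~\ref{lem:grida}. Discarding the at most one edge of $F'$ with an end near the outer cycle of $H$ (a condition on coordinates checkable in constant time), the orientation normalisations, the halving according to whether $y_i\le v_i$, the coordinate reversals and swaps, and the reduction to a set of $l:=4(t(t-1))^3$ edges with either all $x_i$ equal or all $x_i$ distinct are each a single linear scan of $F'$ or a bucket sort of the ends of $F'$ by one grid coordinate, an integer bounded by $\max\{r,s\}\le n$; each costs $O(|F'|+n)=O(m+n)$, and all coordinate transformations can be maintained as a single composed permutation of $V(H)$ applied on the fly, so no array is ever rebuilt. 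The two appeals to Lemma~\ref{lem:epintervals} and Lemma~\ref{lem:es} become constructive through their textbook proofs --- a longest-chain dynamic programme over the order ``entirely to the left of'' for the first, the usual Erd\H{o}s--Szeker\'es dynamic programme for the second. The point for the running time is the slack available here: Lemma~\ref{lem:epintervals} is applied to at most $l=O(t^6)$ intervals and Lemma~\ref{lem:es} to at most $4(t(t-1))^2=O(t^4)$ numbers, whereas $m\ge|F|=32(t(t-1))^6=2l^2$; so even the naive $O(l^2)$ implementations run in $O(m)$ time and nothing sub-quadratic is needed.

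Once the set $I$ of $t(t-1)$ pairwise-disjoint intervals, or the point $z$ together with the $4(t(t-1))^2$ intervals through it, has been produced, the proof of Lemma~\ref{lem:grida} gives an explicit recipe --- a routing of subpaths of the horizontal and vertical paths of $M$, in the second case together with an explicit snake Hamiltonian path of the subgrid $M_2$, plus the chosen edges $e_i$ --- realising an $H^1_{t(t-1)}$ minor of $G'$ whose underlying grid is compatible with $M$. Writing this minor down means producing, for each of the $(t(t-1))^2$ vertices of $H^1_{t(t-1)}$, one branch set that is a union of $O(1)$ grid subpaths together with at most one edge of $F'$; this is $O(n)$ work. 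Into $H^1_{t(t-1)}$ I substitute the $K_t$ model supplied by the proof of Lemma~\ref{lem:H1}, which is given by closed-form coordinate formulas and so is computed in $O((t(t-1))^2)=O(m)$ time; then I compose the two minor maps and pull the result back through the contraction $G\to G'$, each composition being a union of $O(n)$ subgraphs. The resulting family of branch sets is the output; that it is a model of $K_t$ grasped by $M$ is exactly the conclusion of Lemma~\ref{lem:grida} via Lemmas~\ref{lem:H1} and~\ref{lem:controltrans}.

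The main obstacle is bookkeeping rather than mathematics: the successive coordinate permutations, the passage to the submesh $M_1$, and the three nested minor operations ($K_t$ inside $H^1_{t(t-1)}$, $H^1_{t(t-1)}$ inside $G'$, and $G'$ a contraction of $G$) must be composed correctly so that the final branch sets are honest connected subsets of $V(G)$; and the $O(m+n)$ bound rests entirely on the observation that every step that is not a constant-depth scan of $G$ or of $F$ touches only $O(t^6)$ objects while $m=\Omega(t^{12})$.
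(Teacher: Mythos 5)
Your proposal is correct and follows exactly the same route as the paper's proof: trace the argument of Lemma~\ref{lem:grida} constructively, make Lemmas~\ref{lem:epintervals} and~\ref{lem:es} algorithmic via their textbook proofs, and observe that because $|F|=32(t(t-1))^6\le m$, the resulting $O(k^2)$-time subroutines on $k=O(t^6)$ objects cost $O(m)$. The paper states this in one sentence; you have simply spelled out the bookkeeping.
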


\begin{proof}
This follows from the proof of Lemma~\ref{lem:grida}, because 
it is easy to convert the standard proofs of Lemmas~\ref{lem:epintervals}
and~\ref{lem:es} into algorithms with running times $O(k^2)=O(m)$,
where $k$ is as in those lemmas.
\end{proof}

\begin{Lemma}
\label{alg:rtmesh}
There exists an algorithm with the following specifications.\\
{\bf Input:} A graph $G$ on $n$ vertices and $m$ edges, 
an integer $t\ge2$, 
and a mesh in $G$ with grid
contraction function $f:V(M)\to V(H)$ giving rise to a distance function
$d$ on $M$.\\
{\bf Output:}
For $k_0:=12288(t(t-1))^{12}$
either a model of $K_t$ in $G$ grasped by $M$, or 
sets $A\subseteq V(G)$ and $Z\subseteq V(M)$ such that
$|A|\le k_0-1$, $|Z|\le 3k_0-2$,
and if $x,y$ are the ends of a
$M$-path in $G-A$, then either $d(x,y)<\rt{20}t(t-1)$,
or each of $x,y$
lies at distance at most $\rt{10}t(t-1)-1$ from some vertex of $Z$.\\
{\bf Running time:} $O(t^{24}m+n)$
\end{Lemma}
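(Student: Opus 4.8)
The plan is to convert the proof of Lemma~\ref{lem:rtmesh} into an algorithm line by line, using the algorithmic versions of its ingredients that have already been established. First I would invoke Lemma~\ref{alg:diversepaths} with the relation $R$ given by $(x,y)\in R$ iff $d(x,y)<2t(t-1)$, that is, with $l=2t(t-1)$, and with $k=32(t(t-1))^6$. This runs in time $O(km+n)=O(t^{12}m+n)$. If it returns $k$ disjoint $R$-dispersed $M$-paths, then I feed the union $X$ of their ends (together with the natural matching $F$ on them) into the algorithm of Lemma~\ref{alg:grida}; note the cardinalities match, since $|X|=2k=64(t(t-1))^6$ and $|F|=k=32(t(t-1))^6$, and the pairwise-distance hypothesis $d(x,y)\ge 2t(t-1)$ holds because the paths are $R$-dispersed. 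This produces a model of $K_t$ grasped by $M$ in time $O(m+n)$, and we output it and stop.

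Otherwise Lemma~\ref{alg:diversepaths} returns sets $A_1$ and $Z_1$ with $|A_1|\le k-1$, $|Z_1|\le 3k-3$ such that every $M$-path in $G-A_1$ has its ends denoted $x,y$ with $d(x,y)\le 2l-2<2t(t-1)$ or $x\in\bigcup_{z\in Z_1}R(z)$; this plays the role of claim~(1) in the proof of Lemma~\ref{lem:rtmesh}. Next I would redefine $R$ by $(x,y)\in R$ iff $d(x,y)<\rt{20}t(t-1)$, i.e.\ take $l=\rt{20}t(t-1)$, and call Lemma~\ref{alg:spreadpaths} with $k_0=12288(t(t-1))^{12}$ on the graph $G$, mesh $M$ and this relation. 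If that algorithm returns sets $A_2,Z_2$ then I output $A:=A_1\cup A_2$ and $Z:=Z_1\cup Z_2$: every $M$-path $P$ in $G-A\subseteq G-A_2$ has ends $x,y$ with $d(x,y)\le 2l-2<\rt{40}t(t-1)$ — wait, here one must be slightly careful that the bound matches the stated output; since $2l-2 = \rt{40}t(t-1)-2 < \rt{40}t(t-1)$ this is fine but the statement asks for $d(x,y)<\rt{20}t(t-1)$, so in fact I should instead take $l=\rt{10}t(t-1)$ in this second call, so that $2l-2<\rt{20}t(t-1)$, and the cardinality bookkeeping $|A|\le k_0-1$, $|Z|\le 3k_0-2$ follows from $|A_1|\le k-1$, $|A_2|\le k_0-1$ after observing $k-1+k_0-1 < k_0-1$ fails — so actually one must be more careful: the clean route is to follow the proof of Lemma~\ref{lem:rtmesh} exactly, where the final $A$ is only $A_2$ (renamed) and the final $Z$ is only $Z_2$, and $A_1,Z_1$ are used solely to justify extracting ${\cal P}_3$. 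I will therefore mirror that structure: keep $A_1,Z_1$ internal, and if the first outcome of Lemma~\ref{alg:spreadpaths} occurs, proceed to build the $K_t$ minor.

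When the first outcome of Lemma~\ref{alg:spreadpaths} occurs we obtain $k_0$ disjoint $R$-semi-dispersed $M$-paths ${\cal P}_2$. From here I would carry out, algorithmically, the constructions in the proof of Lemma~\ref{lem:rtmesh}: discard the at most $k-1$ paths meeting $A_1$; among the rest, since $|Z_1|\le 3k-3$, find by a counting pass a single $z\in Z_1$ hit (within distance $2t(t-1)$) by at least $(k_0-k)/(3k-3)$ of the remaining paths, keep those, discard the at most one whose $x$-end lies in the ball $B$ of radius $2t(t-1)$ about $z$, obtaining ${\cal P}_3$ with $|{\cal P}_3|\ge 128(t(t-1))^6$; identify the set $\cal Q$ of vertical and horizontal paths of $M$ ``far'' from $z$ using the curve-based criterion expressed through $f$ and $H$ (this is a linear scan over the coordinates in $H$), form the submesh $M'$ and a distance function $d'$ on it with $d'(x,y)\ge d(x,y)-\rt 8 t(t-1)$ (claim~(2) of that proof), define the retraction paths $\phi(P)$, use Lemma~\ref{lem:match} (whose proof is manifestly a linear-time greedy tree argument) to pull out $\ge 32(t(t-1))^6$ disjoint $B$-paths between the $y$-ends, splice these with the $P_i$ and $\phi(P_i)$ to get disjoint $M'$-paths whose ends are pairwise $d'$-far, and finally call Lemma~\ref{alg:grida} on $M'$ to produce a $K_t$ minor grasped by $M'$, hence by $M$ via Lemma~\ref{lem:controltrans}. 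Each of these steps touches each vertex and edge a bounded number of times, with the only superlinear factor coming from the $O(k_0 m)=O(t^{24}m)$ cost of Lemma~\ref{alg:spreadpaths}; summing gives running time $O(t^{24}m+n)$.

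The main obstacle is not the individual steps — each is either a direct appeal to a previously stated algorithmic lemma or a transparently linear-time reorganization of a proof step — but rather the bookkeeping needed to certify that the output sets $A$ and $Z$ satisfy the stated inequalities $|A|\le k_0-1$, $|Z|\le 3k_0-2$ and the distance dichotomy $d(x,y)<\rt{20}t(t-1)$ (note this is weaker than the $\rt{10}t(t-1)$ of the non-algorithmic Lemma~\ref{lem:rtmesh}, precisely to absorb the $2l-2$ slack from Lemma~\ref{alg:spreadpaths}), together with verifying that constructing the submesh $M'$, the function $\phi$, and the new distance function $d'$ from the stored data $f$, $f^{-1}$ can all be done in linear time rather than, say, recomputing distances naively. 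Concretely: in the second application I set $l=\rt{10}t(t-1)$, and in the second outcome the output is $A:=A_2$, $Z:=Z_2$ with $|Z_2|\le 3k_0-3\le 3k_0-2$; since $2l-2=\rt{20}t(t-1)-2<\rt{20}t(t-1)$ the dichotomy holds as stated. These are routine but must be spelled out; once they are, the proof is complete.
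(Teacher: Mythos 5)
Your proposal follows the paper's proof essentially verbatim: first apply the algorithm of Lemma~\ref{alg:diversepaths} with $l=2t(t-1)$, $k=32(t(t-1))^6$, feeding any resulting dispersed paths to Lemma~\ref{alg:grida}; otherwise record the returned $A_1,Z_1$ internally, apply Lemma~\ref{alg:spreadpaths} with $l=10t(t-1)$ and $k_0$, output its $A,Z$ if the second outcome occurs, and otherwise rebuild the matching from the proof of Lemma~\ref{lem:rtmesh} to feed Lemma~\ref{alg:grida}. Your mid-proof self-correction lands on the correct choice $l=10t(t-1)$ (so that $2l-2<20t(t-1)$ matches the stated output), and your conclusion that only $A_2,Z_2$ are returned is exactly what the paper does.
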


\begin{proof}
The algorithm follows the proof of Lemma~\ref{lem:rtmesh}.
We first apply the algorithm of Lemma~\ref{alg:diversepaths} to the
graph $G$, mesh $M$ and integers $l=2t(t-1)$ and $k=32(t(t-1))^6$.
If the algorithm returns $k$ disjoint dispersed $M$-paths,
then we use the algorithm of Lemma~\ref{alg:grida} to output a model
of $K_t$ grasped by $M$ and stop.
We may therefore assume that the algorithm of 
Lemma~\ref{alg:diversepaths} returns sets $A\subseteq V(G)$ and
$Z\subseteq V(M)$ such that $|A|\le k-1$, $|Z|\le 3k-3$, and for
every $M$-path $P$ in $G-A$ its ends may be denoted by $x$ and $y$ such that
either $d(x,y)\le 4t(t-1)-2$ or $d(x,z)\le 2t(t-1)-1$ for some $z\in Z$.
Next we apply the algorithm of Lemma~\ref{alg:spreadpaths} to the
graph $G$, mesh $M$ and integers $l=\rt{10}t(t-1)$ and $k_0$.
If the algorithm returns sets $A$ and $Z$, then we return those sets and stop.
We may therefore assume that the algorithm of Lemma~\ref{alg:spreadpaths}
returns a set of $k_0$ pairwise disjoint semi-dispersed $M$-paths.
We use the argument of the proof of Lemma~\ref{lem:rtmesh} to use the
paths to construct a matching to which we can apply the algorithm
of Lemma~\ref{alg:grida} to output a model
of $K_t$ grasped by $M$.
%This follows from the proof of Lemma~\ref{lem:rtmesh} by invoking
%Lemmas~\ref{alg:spreadpaths}, \ref{alg:diversepaths} and~\ref{alg:rtmesh}.
\end{proof}

%To state an algorithmic version of Theorem~\ref{thm:crossreduct} 
%let us
%recall that a $C$-reduction sequence was defined at the beginning of
%Section~\ref{sec:proof}.
%%Let $G$ be a graph, let $C$ be a cycle in $G$, and let $J$ be a $C$-reduction
%%of $G$ obtained by successively performing elementary $C$-reductions determined
%%by separations $(A_1,B_1), (A_2,B_2),\ldots,(A_k,B_k)$.
%If $(B_1,B_2,\ldots,B_k)$ is a $C$-reduction sequence for $G$ and $J$ is the
%resulting $C$-reduction, then we say that  $(B_1,B_2,\ldots,B_k)$ is a 
%\emph{$C$-reduction sequence  leading to $J$}.
The following is an algorithm of \zz{Kawarabayashi, Li and Reed~\cite{KawLiRee}} stated using our
terminology.

\begin{Theorem}
\label{alg:crossreduct}
There is a polynomial-time algorithm with the following specifications.\\
{\bf Input:} A graph $G$ with $n$ vertices and $m$ edges
and a cycle $C$ in $G$.\\
{\bf Output:} Either a $C$-cross in $G$, or 
\zz{a $C$-rendition.}\\
%an optimal $C$-reduction sequence
%leading to a $C$-reduction $J$, and a drawing of $J$ in the plane with
%$C$ bounding the outer face.\\
{\bf Running time:} $O(n+m)$.
\end{Theorem}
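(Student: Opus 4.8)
The plan is to recast Shiloach's algorithm~\cite{Shi} in the language of $C$-crosses and $C$-reduction sequences, following the constructive content of the proof of Theorem~\ref{thm:crossreduct} given in the Appendix. The algorithm has a reduction phase, which produces a candidate $C$-reduction $J$ together with an optimal reduction sequence, followed by a planarity test on $J$ and, in the negative case, the extraction and lifting of a $C$-cross. In the reduction phase we maintain a current graph $G_i$, starting from $G_0:=G$, and while $G_i$ has a separation $(A,B)$ of order at most three with $V(C)\subseteq A$, $B\setminus A\neq\emptyset$, and $|A\cap B|$ paths from some $v\in B\setminus A$ to $V(C)$ disjoint except for $v$, we pick such a separation with $B$ inclusion-wise maximal, append $B$ to the sequence, and pass to the elementary $C$-reduction $G_{i+1}$ it determines. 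Each step strictly decreases the number of vertices, so after at most $n$ steps we reach a graph $J$ admitting no such separation, together with a sequence $(B_1,\dots,B_k)$; since each $B_i$ was chosen maximal among reducible separations of $G_{i-1}$, the sequence is optimal by definition, which is the construction underlying Lemma~\ref{lem:optreduct}, performed algorithmically. To find a maximal reducible separation one computes, for suitable candidate vertices $v$ far from $V(C)$, a maximum family of paths from $v$ to $V(C)$ disjoint except for $v$, by at most three rounds of augmenting paths, in $O(m)$ time; if that family has size at most three, the side of $v$ in the residual graph together with the cut gives the maximal $B$. With appropriate care in which vertices are re-probed after each reduction, the total cost of the phase is $O(nm)$.

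Next we test in linear time whether $J$ admits a planar drawing with $C$ bounding a face, which is a routine variant of planarity testing. If it does, we output $(B_1,\dots,B_k)$ and the drawing and stop. This is correct: elementary $C$-reductions preserve the absence of a $C$-cross, since at most one of the two paths of a cross can meet $B\setminus A$ (otherwise the two paths would together pass through at least four distinct vertices of $A\cap B$), so if $G$ has no $C$-cross then neither does $J$; by Theorem~\ref{thm:crossreduct} some $C$-reduction of $J$ is planar with $C$ on a face, but $J$ admits no reducible separation and is therefore its own only $C$-reduction, so $J$ itself is such a drawing. Conversely, if $J$ admits no such drawing then, by the same theorem, $J$ and hence $G$ has a $C$-cross.

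In that negative case we produce the cross as follows. Running a linear-time planarity algorithm on $J$ augmented by a new vertex adjacent to all of $V(C)$ exhibits a subdivision of $K_5$ or $K_{3,3}$; using that $C$ is a cycle of $J$ and that $J$ has no reducible separation of order at most three over $V(C)$, we convert this Kuratowski subdivision into an explicit $C$-cross of $J$, which is precisely the constructive core of the hard direction of Theorem~\ref{thm:crossreduct} and can be carried out in $O(m)$ time by tracking the relevant paths. We then lift the cross back through $G_{k-1},\dots,G_0$: each time the current cross uses a clique edge $uv$ created when reducing $G_{i-1}$ over $(A_i,B_i)$, we reroute it through a $u$--$v$ path internal to $G_{i-1}[B_i]$, which exists because $B_i$ carries paths from a single vertex of $B_i\setminus A_i$ to the vertices of $A_i\cap B_i$ that are disjoint except at that vertex; since distinct vertices of $A_i\cap B_i$ are reached along such internally disjoint paths, the two sides of the cross remain disjoint, so we obtain a $C$-cross of $G_{i-1}$. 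After at most $k\le n$ reroutings we have a $C$-cross of $G$, at total cost $O(\sum_i |E(G_i[B_i])|)=O(m)$, and we output it.

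The main obstacle is twofold. The first is implementing the reduction phase within the $O(nm)$ time bound: one must locate maximal reducible separations over and over without recomputing global flows from scratch, which takes some care in bookkeeping, and alternatively one can extract all the relevant two- and three-cuts once, in linear time, from a decomposition into triconnected components. The second, and more essential, is the conversion of a Kuratowski subdivision into an actual $C$-cross in the irreducible case; this is the genuine content of Shiloach's theorem, and for it we lean on the constructive proof of Theorem~\ref{thm:crossreduct}.
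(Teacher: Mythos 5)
Your proposal recapitulates, at the right level of detail, what the paper itself does for this theorem: the paper does not prove it but simply cites Shiloach's algorithm \cite{Shi} and adds one sentence noting that the reduction sequence it produces ``can be chosen'' to be optimal. Your greedy scheme (at each step reduce along a separation $(A,B)$ with $B$ inclusion-wise maximal among all reducible separations of the current graph) is a concrete and correct way to make that remark precise, and it matches the paper's definition of optimality directly, rather than going through the existence argument of Lemma~\ref{lem:optreduct}. The planarity test via the augmented graph, the extraction of a Kuratowski subdivision, and the lifting of a cross through the clique edges are all standard and in the spirit of the Appendix proof of Theorem~\ref{thm:crossreduct}, and your running-time accounting gives $O(nm)$ as required.

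One inaccuracy worth fixing: in your second paragraph the implication you actually need is ``$J$ has a $C$-cross $\Rightarrow$ $G$ has a $C$-cross,'' which is the lifting direction (a clique edge of $A\cap B$ used by the cross in $J$ is replaced by an internal path of $G[B]$); but the argument you give there (``at most one of the two paths of a cross can meet $B\setminus A$'') is the proof of the \emph{other} direction, namely that a cross of $G$ survives the reduction. You do supply the correct lifting argument in the third paragraph, so this is a matter of which justification is attached to which claim rather than a real gap. Beyond that, the proposal leans on Shiloach and on the constructive content of Theorem~\ref{thm:crossreduct} for the hard step (converting a Kuratowski subdivision into a $C$-cross in an irreducible graph), which is exactly what the paper itself does by citation, so this is acceptable given the paper's own treatment.
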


Let us remark that the algorithm 
\zz{of Kawarabayashi, Li and Reed~\cite{KawLiRee} is  formulated in terms of $C$-reductions, which is equivalent to $C$-renditions
 by Theorem~\ref{thm:crossredrend}.}
%does not explicitly say that the
%$C$-reduction sequence it finds is optimal, but it is clear from the description of
%the algorithm that it can be chosen that way.

Our last lemma is an algorithmic version of Lemma~\ref{lem:flatwalllike}.

\begin{Lemma}
\label{alg:flatwalllike}
There exists an algorithm with the following specifications.\\
{\bf Input:}
A graph $G$ on $n$ vertices and $m$ edges, a subgraph $W$ of $G$, 
a cycle $C$ in $G$,
a cycle $D$ in $W$ such that $W-V(D)$ is connected, 
four internally disjoint paths from
$V(W) \setminus V(D)$ to $V(C)$ with distinct ends in $C$
such that each intersects $D$ in a path,
\zz{and a $C$-rendition of $G$.\\}
%\rt{an optimal} $C$-reduction sequence $Y_1,Y_2,\ldots,Y_k$ in $G$ leading to
%a $C$-reduction $J$, and  drawing of $J$ in the plane with $C$ bounding
%a face.\\
{\bf Output:} A separation $(A,B)$ in $G$ satisfying {\rm(1)--(4)}
of Lemma~\ref{lem:flatwalllike}
\zz{and an $\Omega$-rendition of $G[B]$,
where $\Omega$ is a cyclic ordering of $A\cap B$ and  the cyclic order is determined by the order on  $D$.\\}
%\begin{enumerate}
%\item[(1)] $A\cap B\subseteq V(D)$,
%\item[(2)] $V(W)\subseteq B$,
%\item[(3)] $V(C)\subseteq A$, and
%\item[(4)] some $A\cap B$-reduction of $G[B]$ can be drawn in a 
%disk with $A\cap B$ drawn on the
%boundary of the disk in the order determined by $D$.
%\end{enumerate}
{\bf Running time:} $O(n+m)$.
\end{Lemma}

\begin{proof}
\zz{Let $(\Gamma,\sigma,\pi)$ be a $C$-rendition of $G$.
We construct a track of $D$ as in the proof of Lemma~\ref{lem:flatwalllike}.
Using the track we construct the separation $(A,B')$, and then modify it 
to the separation $(A,B)$, as in the proof of Lemma~\ref{lem:flatwalllike}.
Finding the original separation takes time $O(n+m)$,
and the modifications take time $\sum_{i=1}^nO(|E(\sigma(c_i))|)$.}
%For $i=1,2,\ldots,k$ let $X_i\subseteq V(G)$ be such that 
%$(X_1,Y_1), (X_2,Y_2),\ldots,(X_k,Y_k)$
%are the separations that determine the $C$-reduction $J$.
%%As in the proof of Lemma~\ref{lem:flatwalllike} we may assume that the
%%separations are chosen so that $Y_1$ is maximal, subject to that
%%$Y_2$ is maximal, and so on. FIX} 
%We will not compute the sets $X_i$, but we will compute $Z_i:=X_i\cap Y_i$.
%Let $G_0:=H$, $W_0:=W$, $D:=D$, and 
%Let $P_1,P_2,P_3,P_4$ be four internally
%disjoint paths from $V(W)\setminus V(D)$ to $V(C)$ such that each intersects $D$.
%Furthermore, we may select them in such a way that $P_1,P_2$ originate in
%$x\in V(W)\setminus V(D)$, the paths $P_3,P_4$ originate in 
%$y\in V(W)\setminus V(D)$, and there
%exists a path $P_0$ in $W-V(D)$ with ends $x$ and $y$ such that the
%paths $P_0,P_1,\ldots,P_4$ are pairwise disjoint, except when $x$ or $y$
%are a common end.
%For $i=1,2,\ldots,k$ we delete $Y_i\setminus Z_i$, add all edges with
%both ends in $Z_i$, and modify $M,W,D,P_0,P_1,\ldots,P_4$ as in the
%proof of Lemma~\ref{lem:flatwalllike}.
%Thus we arrive at a graph isomorphic to $J$, at which point we construct
%a separation $(A,B)$ of the graph using the disk $\Delta'$ as in the
%proof of Lemma~\ref{lem:flatwalllike}.
%Then for $i=k,k-1,\ldots,1$ we adjust the separation $(A,B)$ as in
%the proof of Lemma~\ref{lem:flatwalllike} so that in the end the resulting
%separation will satisfy the required properties in the original graph $G$.
%Finding the original separation takes time $O(m)$, and for
%$i=1,2,\ldots,k$ the adjustments take time $O(|E(G[Y_i])|)$.
Thus the total running time is $O(n+m)$.
\end{proof}

We are finally ready to describe our main algorithm.

\begin{Theorem}
\label{alg:wst}
There is an algorithm with the following specifications.\\
{\bf Input:} A graph $G$ on $n$ vertices and $m$ edges,
integers $r,t\ge1$,
and an $R$-wall $W$ in $G$, where
$R=49152t^{24}(\rt{60}t^2+r)$.\\
{\bf Output:} 
Either a model of a $K_t$ minor in $G$ grasped by $W$, or 
a set $A\subseteq V(G)$ of size at most $12288t^{24}$ and an $r$-subwall
$W'$ of $W$ such that $V(W')\cap A=\emptyset$ and $W'$ is a flat wall
in $G-A$.
In the second alternative the algorithm also returns a separation $(A,B)$
as in the definition of flat wall, 
\zz{and an $\Omega$-rendition of $G[B]$,
where $\Omega$ is a cyclic ordering of $A\cap B$ and  the cyclic order is determined by the order on 
the outer cycle of $W'$.}\\
%an $A\cap B$-reduction sequence for
%$G[B]$ leading to a $A\cap B$-reduction $J$ of $G[B]$, and a drawing
%of $J$ in the unit disk with the vertices $A\cap B$ drawn on the
%boundary of the disk in the order determined by the outer cycle of $W'$.\\
{\bf Running time:} $O(t^{24}m+n)$.
\end{Theorem}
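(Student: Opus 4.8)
The plan is to transcribe the proof of Theorem~\ref{thm:wst} step by step, replacing each existential assertion by a call to the corresponding algorithmic lemma proved above. First compute a grid contraction function $f:V(W)\to V(H)$ for the wall $W$, and hence a distance function $d$ on $W$; since $W$ is a wall this is a direct construction in time $O(|V(W)|)=O(n)$. Then run the algorithm of Lemma~\ref{alg:rtmesh} on the mesh $W$ with the integer $t$. If it returns a model of a $K_t$ minor grasped by $W$, output it and halt. Otherwise it returns sets $A\subseteq V(G)$ and $Z\subseteq V(W)$ of the sizes guaranteed by that lemma (so in particular $|A|\le 12288(t(t-1))^{12}\le 12288t^{24}$) such that every $W$-path in $G-A$ with ends $x,y$ either satisfies $d(x,y)<20t(t-1)$ or has each of $x,y$ within distance $10t(t-1)-1$ of $Z$; the coefficient $60t^2$ (versus $40t^2$ in Theorem~\ref{thm:wst}) is exactly what is needed to absorb these slightly weaker bounds in the width bookkeeping below.

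Next, scan the horizontal paths $P_0,\dots,P_R$ of $W$ to find a strip $S$ of the appropriate width (now $60t(t-1)+r$ consecutive horizontal paths together with the vertical subpaths joining them) that is disjoint from $A\cup Z$; such a strip exists by a pigeonhole count since $R$ is large enough, and it is found in time $O(|V(W)|)=O(n)$. Inside $S$ extract subwalls $W_1,\dots,W_{t(t-1)}$ playing the role of conditions (2)--(4) in the proof of Theorem~\ref{thm:wst}, with the constants scaled up to the weaker distance bound of Lemma~\ref{alg:rtmesh} (each $W_i$ a $(40t(t-1)+r)$-wall built from subpaths of the rows and columns of $S$, pairwise at $d$-distance at least $20t(t-1)$, and avoiding the outermost horizontal paths of $S$); this is the same explicit combinatorial construction, done in linear time. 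For each $i$ let $C_i$ be the cycle on the four corners of $W_i$, and build $H_i$ as the union of $W_i$, $C_i$, and the restriction to $V(W_i)$ of every $(W-A)$-bridge in $G-A$ having an attachment in $V(W_i)$. Computing the $(W-A)$-bridges of $G-A$ and dealing them out to the graphs $H_i$ takes time $O(m)$; as shown in the proof of Theorem~\ref{thm:wst} the $H_i$ are pairwise vertex-disjoint, hence edge-disjoint.

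Now, for $i=1,\dots,t(t-1)$, run Shiloach's algorithm of Theorem~\ref{alg:crossreduct} on $(H_i,C_i)$. If it returns a $C_i$-cross for every $i$, then, exactly as in the proof of Theorem~\ref{thm:wst}, these crosses assemble into an $H^1_{t(t-1)}$ minor of $G$ with underlying grid compatible with $W$, and by Lemma~\ref{lem:H1} and Lemma~\ref{lem:controltrans} into a model of a $K_t$ minor grasped by $W$; this model can be written down from the crosses in linear time, and we output it. Otherwise let $i$ be the first index for which the algorithm instead returns an optimal $C_i$-reduction sequence $Y_1,\dots,Y_k$ leading to a $C_i$-reduction $J$ together with a planar drawing of $J$ with $C_i$ bounding a face. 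Let $W'$ be obtained from $W_i$ by deleting an appropriate number of its outermost horizontal and vertical paths so that $W'$ is an $r$-subwall of $W_i$ (hence of $W$), let $D$ be the outer cycle of $W'$, and find inside $W_i$ the four internally disjoint paths from $V(W')\setminus V(D)$ to $V(C_i)$, each meeting $D$ in a path, required by Lemma~\ref{lem:flatwalllike}. Feed $(H_i,W',D,C_i)$ together with the sequence and drawing to the algorithm of Lemma~\ref{alg:flatwalllike}; its proof follows the constructive proofs of Lemmas~\ref{lem:optreduct} and~\ref{lem:flatwalllike}, and in the course of building the separation $(X',Y)$ of $H_i$ satisfying (1)--(4) of Lemma~\ref{lem:flatwalllike} it also produces an $X'\cap Y$-reduction sequence for $H_i[Y]$ leading to a reduction drawn in a disk with $X'\cap Y$ on the boundary in the order determined by $D$. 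Put $X:=X'\cup(V(G)\setminus A\setminus V(H_i))$; as verified in the proof of Theorem~\ref{thm:wst}, $(X,Y)$ is a separation of $G-A$ witnessing that $W'$ is a flat wall in $G-A$, with the pegs of $W'$ in $X$, and since $Y$ contains no vertex of $V(G)\setminus A\setminus V(H_i)$ the reduction sequence and disk drawing found for $H_i[Y]$ already serve for $G[Y]$. Output $A$, the $r$-subwall $W'$, the separation $(X,Y)$, the reduction sequence and the drawing.

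For the running time we add up the pieces: $O(n)$ for the grid contraction function and the strip search; $O(t^{24}m+n)$ for Lemma~\ref{alg:rtmesh}; $O(m)$ for extracting the subwalls and the $(W-A)$-bridges; $O(m)$ for writing down the $K_t$ model in the first alternative; $O(m+n)$ for Lemma~\ref{alg:flatwalllike}; and $t(t-1)$ calls to Shiloach's algorithm, each on a subgraph of $G$ and hence running in time $O(nm)$, for a total of $O(t^2mn)$; altogether $O(t^{24}m+t^2mn)$, as claimed. The step I expect to require the most care is exactly the reduction bookkeeping: the conclusion demands an explicit $A\cap B$-reduction sequence and an explicit disk drawing, not merely their existence, so one must ensure that Theorem~\ref{alg:crossreduct} and the algorithm of Lemma~\ref{alg:flatwalllike} actually emit these objects and then propagate them correctly through the inductive adjustments of Lemma~\ref{lem:flatwalllike} (and through the extension from $H_i[Y]$ to $G[Y]$). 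This is precisely why those algorithmic statements were set up to take a reduction sequence and a drawing as part of their input and output; once that plumbing is in place, the rest is a faithful transcription of the proof of Theorem~\ref{thm:wst}.
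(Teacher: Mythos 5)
Your proposal is correct and is essentially a faithful transcription of the paper's proof of Theorem~\ref{alg:wst}: compute a grid contraction function, run the algorithm of Lemma~\ref{alg:rtmesh}, extract a strip of width $60t(t-1)+r$ and subwalls $W_1,\dots,W_{t(t-1)}$ with the scaled-up constants, apply Shiloach's algorithm (Theorem~\ref{alg:crossreduct}) to each $H_i$, and in the flat case feed the resulting optimal $C_i$-reduction sequence into the algorithm of Lemma~\ref{alg:flatwalllike} and extend the separation to $G-A$ as in the proof of Theorem~\ref{thm:wst}. You are also right that the subtle point is propagating the explicit reduction sequence and drawing through Lemma~\ref{alg:flatwalllike} and from $H_i[Y]$ to $G[Y]$; the paper leaves this implicit but your justification (that $Y\subseteq V(H_i)$ and hence $(G-A)[Y]=H_i[Y]$) is sound.
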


\begin{proof}
We compute a grid contraction function $f:V(W)\to V(H)$ and apply
the algorithm of Lemma~\ref{alg:rtmesh} to the graph $G$, mesh $W$, 
function $f$, and integer $t$.
If the algorithm returns a model of $K_t$ grasped by $W$, then we return
that model and stop.
We may therefore assume that the algorithm of Lemma~\ref{alg:rtmesh}
returned sets $A\subseteq V(G)$ and $Z\subseteq V(M)$ such that
$|A|\le 12288(t(t-1))^{12}$, $|Z|\le 3\cdot  12288(t(t-1))^{12}$,
and if $x,y$ are the ends of an
$M$-path in $G-A$, then either $d(x,y)<\rt{20}t(t-1)$,
or each of $x,y$
lies at distance at most $\rt{10}t(t-1)-1$ from some vertex of $Z$.
We define strips similarly as in the proof of Theorem~\ref{thm:wst},
except that strips will now consist of $\rt{60}t(t-1)+r$ consecutive paths.
We construct walls $W_1,W_2,\ldots,W_{t(t-1)}$, but this time each will
be a $(\rt{40}t(t-1)+r)$-wall,  they will be pairwise at distance at least
$\rt{20}t(t-1)$,
\rt{%
and each will be disjoint from the first and last $10t(t-1)$ paths of the strip.}
We construct the graphs $H_i$ and cycles $C_i$ as in the proof of
Theorem~\ref{thm:wst}, and apply the algorithm of 
Theorem~\ref{alg:crossreduct} to each.
If each of them has a $C_i$-cross, then we use those crosses to
construct a model of $K_t$ grasped by $W$, as in the proof of
Theorem~\ref{thm:wst}.
On the other hand if some $H_i$ has 
\zz{a $C_i$-rendition,}
%\rt{an optimal}  $C_i$-reduction sequence leading
%to a $C_i$-reduction that can be drawn in the plane with $C_i$ bounding
%a face, 
then we apply the algorithm of Lemma~\ref{alg:flatwalllike}
to $H_i$, wall $W_i$, its outer cycle and the \zz{$C_i$-rendition}
%\rt{optimal} $C_i$-reduction sequence
to produce a separation $(X',Y)$ satisfying (1)--(4) of
Lemma~\ref{lem:flatwalllike}
\zz{and an $\Omega$-rendition of $G[Y]$,
where $\Omega$ is a cyclic ordering of $X'\cap Y$ and  the cyclic order is determined by the order on 
the outer cycle of $W_i$.}
Finally, we convert $(X',Y)$ to a required separation of $G$ as in
the proof of Theorem~\ref{thm:wst}.
\end{proof}

%For fixed $t$ the most time-consuming step in the above algorithm
%is the use of Shiloach's algorithm stated as Theorem~\ref{alg:crossreduct}.
%Any improvement in the running time of that algorithm would immediately
%imply a corresponding improvement to our algorithm.
%%Currently the fastest algorithm whose analysis has been rigorously
%%proven and published is by Shiloach~\cite{Shi}
%%and runs in time $O(nm)$.
%Several faster algorithms for the task of  Theorem~\ref{alg:crossreduct}
%have been announced or appeared in conference
%proceedings and elsewhere, but without complete proofs.
%For instance, the algorithm of~\cite{Tholey} uses a data structure
%whose analysis has never been published.
%We believe that there should be a linear-time algorithm, but none has
%been published thus far.

%%%%%%%%%%%%%%%%%%%%%%%%%%%%%%%%%%%%%%
%\include*{algorithm}

\bigskip

\begin{samepage}
{\Large \bf Acknowledgment}\\
\\
We would like to thank Ann-Kathrin Elm from the University of Hamburg for pointing out a couple of
inaccuracies in an early draft of this article.
\zz{We would also like to acknowledge that since the first version of this article has been posted,
Chuzhoy~\cite{ChuFlat} dramatically improved the bound in Theorem~\ref{thm:mainvar}
to $R_0=\Omega(t(t+r))$.}
\end{samepage}

%\newpage
%%%%%%%%%%%%%%%%%%%%%%%%%%%%%%%%%%%%%%%%%%%%%%

%%%%%%%%%%%%%%%%%%%%%%%%%%%%%%%%%%%%%%%%%%%%%%
\section{Appendix: Characterizing graphs with no $C$-cross}

In this section, we present a proof of Theorem \ref{thm:crossreduct} which characterizes when a given graph $G$ containing a cycle $C$ has a $C$-cross.  The proof is due to Robertson and Seymour \cite{RS9}.  

%We now give the proof of Theorem \ref{thm:crossreduct}. 
%We now characterize exactly which societies admit a cross.  We include the original proof of \cite{RS9} for completeness.
%\begin{Theorem}[Robertson, Seymour \cite{RS9}]\label{thm:2-path}
%Let $(G, \Omega)$ be a society.  $(G, \Omega)$ contains a cross if and only if $(G, \Omega)$ does not admit a restricted rendition.
%, and let $\Delta$ be the unit disc.  If $(G, \Omega)$ does not contain a cross, then there exist graphs $G_0, G_1, %\dots, G_m$ for some integer $m$ such that the following hold:
%\begin{enumerate}
%\item $G = \bigcup_{0}^m G_i$ and $\Omega \subseteq V(G_0)$, 
%\item for all $i \ge 1$, $|V(G_i) \cap V(G_0)|\le 3$, and for all $i, j \ge 1$, $V(G_i) \cap V(G_j) \subseteq V(G_0)$.
%\item There exist pairwise disjoint open discs $\Delta_1, \dots, \Delta_m$ in the interior of $\Delta$ such that $G_0$ can be embedded in $\Delta \setminus \bigcup_1^m \Delta_i$ such that the vertices of $G_0$ contained in the boundary of the disc are exactly the vertices of $\Omega$ in the correct cyclic order, and the vertices of $G_0$ embedded in the boundary of $\Delta_i$ are exactly the vertices of $V(G_0) \cap V(G_i)$ for $1 \le i \le m$.
%\end{enumerate}
%\end{Theorem}
%\begin[Theorem~\ref{thm:crossreduct}]{proof}

Let $G$ be a graph and $C$ a cycle in $G$.  
We first prove the easy ``if" implication. We have noted earlier
that if $H$ is an elementary $C$-reduction of $G$, then $H$ contains a $C$-cross if and only if $G$ does as well.  Let $G'$ be any $C$-reduction of $G$.  If $G'$ can be drawn in the plane with $C$ bounding the infinite face, then by planarity, there does not exist a $C$-cross in $G'$.  Consequently, there does not exist a $C$-cross in $G$ as well.  

We now prove the ``only if" implication by induction on  $|V(G)|+|E(G)|$.
If $G=C$, then the theorem clearly holds, and so we may assume that $G\ne C$ and that $G$ has no \yy{$C$-}cross.
We may assume that $G$ is simple, because deleting loops and parallel edges does not change the validity of either of the statements in the theorem. 
If $G$ has an elementary $C$-reduction, then the theorem follows by induction applied to that $C$-reduction.
Thus we may assume that $G$ has no elementary $C$-reduction. Therefore

\myclaim{cl:nosep}{$G$ has no separation $(A,B)$ of order at most three with  $V(C) \subseteq A$ and $B\setminus A\ne\emptyset$,}

\noindent
because if such a separation exists, then choosing one with $|A\cap B|$ minimum gives a separation that determines an elementary $C$-reduction of $G$, 
a contradiction.

Define a \emph{tripod} \?{tripod} as a union of paths $P_1, P_2, P_3, Q_1, Q_2, Q_3$ satisfying the following. The paths $P_1, P_2, P_3$ have a common end $v \in V(G) \setminus V(C)$ and are otherwise pairwise disjoint.  Each $P_i$, $1 \le i \le 3$ has exactly one vertex in $V(C)$, call it $x_i$, and $x_i$ is an end of $P_i$.  The paths $Q_1, Q_2, Q_3$ have a common end $u \in (V(G) \setminus V(C)) $, $u \neq v$, and are otherwise pairwise disjoint.  For every $1 \le i \le 3$, $Q_i$ has an end $y_i \in V(P_i) -\{ v\}$ and $Q_i$ is otherwise disjoint from $P_1\cup P_2\cup P_3$. 

%\begin{Claim}\label{cl:cross2}
\myclaim{cl:tripod}{The graph $G$ does not contain a tripod.}
%\end{Claim}  

To prove~\refclaim{cl:tripod}
assume there exists a tripod $T$ and let the paths $P_1, P_2, P_3, Q_1, Q_2, Q_3$ and the vertices 
$x_1, x_2, x_3, u,\allowbreak v, y_1, y_2, y_3$ be 
labeled as in the definition of a tripod.  
For $i=1,2,3$ let $L_i$ be the subpath of $P_i$ with ends $x_i$ and $y_i$, and let $R_i$ be the  subpath of $P_i$ with ends $v$ and $y_i$.
Let $X=V(R_1\cup R_2\cup R_3\cup Q_1\cup Q_2\cup Q_3)$.
By~\refclaim{cl:nosep} there exist four disjoint paths from $X$ to $V(C)$, and by a standard ``augmenting path" argument 
(cf.~\cite[Section~3]{Diestel}) those paths can be chosen such that three of them have ends in $\{y_1,y_2,y_3\}$ and (possibly different)
three of those paths have ends in $\{x_1,x_2,x_3\}$.
Thus by possibly replacing the paths $L_1,L_2,L_3$ by a different set of disjoint paths we may assume that  there exists a path $Q$
with one end in $X\setminus\{y_1,y_2,y_3\}$ and the other end in $V(C)-\{x_1,x_2,x_3\}$ that is disjoint from $T$ except for one of its ends.
It follows that $T\cup Q$ includes a $C$-cross, a contradiction, which proves~\refclaim{cl:tripod}.
\bigskip

Let us recall that $H$-bridges were defined prior to Theorem~\ref{thm:wst} and $H$-paths were defined at the beginning of Section~\ref{sec:mpaths}.
If $P$ is a $C$-path, then a $C\cup P$-bridge is {\em unstable} if all its attachments belong to $V(P)$, and {\em stable} otherwise.

\myclaim{cl:stable}{There exists a $C$-path $P$ in $G$ such that every $C\cup P$-bridge is stable.}

To prove~\refclaim{cl:stable} we first note that since $G\ne C$, it follows from~\refclaim{cl:nosep} that $G$ has a $C$-path.
Let $P$ be a $C$-path chosen such that the number of vertices of $G-V(C\cup P)$ that belong to stable $C\cup P$-bridges is maximum.
We claim that $P$ is as desired.
To prove the claim we may assume for a contradiction that
there exists at least one unstable bridge.

A vertex $v$ of $P$ is \emph{straddled} \?{straddled} if it is an internal vertex of $P$ and there exists an unstable bridge with 
attachments in both components of $P - v$.  
We claim that there exists at least one straddled vertex in $P$.   Let $B$ be an unstable bridge.  If $B$ has at least three vertices, 
then it  has at least three attachments by~\refclaim{cl:nosep}, and therefore a middle attachment is straddled.  
Otherwise, if $B$ is has only two vertices, then its vertices are not adjacent in $P$ because $G$ is simple, and consequently, 
there exists a straddled vertex between the vertices of $B$.

Let $R$ be a maximal subpath of $P$ such that every internal vertex of $R$ is straddled.
%\rem{, and assume $R$ is chosen of maximal length among all such subpaths.}  
Note that $R$ has length at least two.  As by~\refclaim{cl:nosep} the ends of $R$ do not form a vertex cut 
of size two separating the internal vertices of $R$ from $C$, we see there exists a $C\cup P$-bridge $B'$ with an attachment $x$ that is an 
internal vertex of $R$ and an attachment which is not contained in $R$.  If $B'$ were unstable, then it must straddle one of the ends of $R$, 
violating the maximality of $R$.  We conclude that $B'$ is stable.

The vertex $x$ is straddled by some unstable bridge $D$. Let $u,v$ be attachments of $D$ such that $u,x,v$ are distinct and
appear on $P$ in the order listed.  Let $P'$ be obtained from $P$ by replacing the subpath from $u$ to $v$ by a subpath of $D$ from $u$ to $v$.
It follows that every stable $C\cup P$-bridge is a subgraph of a stable $C\cup P'$-bridge, and the vertex $x$ belongs to a stable 
$C\cup P'$-bridge containing $B'$.
Thus the path \yy{$P'$} contradicts the choice of $P$. 
This proves~\refclaim{cl:stable}.
\bigskip

\bigskip

Let $P$ be a $C$-path in $G$ such that every $C\cup P$-bridge is stable.  

%Let the endpoints of $P$ be $x$ and $y$.  Consider the linkage $\zP$ formed by the union of $P$ along with each of the vertices of $V(C) \setminus \{x, y\}$ taken %as a trivial path consisting of one vertex.  Since the set of endpoints of elements of $\zP$ is $V(C)$, by Claim \ref{cl:cross1} we can apply Theorem \ref{thm:tutte} to %assume that every $\zP$-bridge is stable.  Let $W_1$ and $W_2$ be the two components of $C-\{x, y\}$.  We conclude that every $C \cup P$-bridge has an %attachment in either $V(W_1)$ or $V(W_2)$. 

%\begin{Claim}\label{cl:cross3}
\myclaim{cl:jump}{No $C \cup P$-bridge has attachments in different components of $C-V(P)$.}
%\end{Claim}

To prove~\refclaim{cl:jump} we note that if such a bridge existed, then it would include a  path $Q$  with ends in different components of $C-V(P)$.
But then the  paths $P$ and $Q$ form a $C$-cross, a contradiction, which proves~\refclaim{cl:jump}.
\bigskip

Let $C_1,C_2$ be the two cycles of $C\cup P$ other than $C$.
It follows from~\refclaim{cl:stable} and~\refclaim{cl:jump} that every $C\cup P$-bridge is either a $C_1$-bridge, or a $C_2$-bridge, and not both.
For $i=1,2$ let $G_i$ be the union of $C_i$ and all $C\cup P$-bridges of $G$ that are $C_i$-bridges.
Then $G_1\cup G_2=G$, $G_1\cap G_2=P$, and $|V(G_i)|+|E(G_i)|<|V(G)|+|E(G)|$ for $i=1,2$.

\myclaim{cl:sep}{For $i=1,2$ the graph $G_i$ has no elementary $C_i$-reduction.}

To prove~\refclaim{cl:sep} let $i\in\{1,,2\}$. If $G_i$ has an elementary $C_i$-reduction, then it
has a separation $(A, B)$ of order at most three with $C_i$ contained in $A$ and $B\setminus A\ne\emptyset$.
Then $(A \cup V(G_{3-i}), B)$ is a separation of $G$ contradicting~\refclaim{cl:nosep}. This proves~\refclaim{cl:sep}.
\bigskip

By induction and~\refclaim{cl:sep}, for $i=1,2$ the graph $G_i$ either has a $C_i$-cross, or can be drawn in the plane with $C_i$ bounding a face.
If the latter alternative holds for both $i=1$ and $i=2$, then the two drawings may be combined to produce a drawing of $G$ in the plane with
$C$ bounding a face, as desired. Thus we may assume without loss of generality that $G_1$ contains a $C_1$-cross $Q_1, Q_2$.  Let the ends of 
$Q_i$ be $s_i$ and $t_i$.  If $P$ contains at most two of the vertices $s_1, s_2, t_1, t_2$, we see that the cross $Q_1, Q_2$ readily extends to a $C$-cross in $G$ by possibly using subpaths of $P$, a contradiction.  

%\begin{Claim}\label{cl:cross4}
%\myclaim{cl:contain}{We may assume that $\{s_1, s_2, t_1, t_2\} \nsubseteq V(P)$.}
%\end{Claim}

%To prove~\refclaim{cl:contain}
We claim that  we may assume that $\{s_1, s_2, t_1, t_2\} \nsubseteq V(P)$.
To prove this claim we may assume that $Q_1$ and $Q_2$ each have their both ends contained in $V(P)$.  
Since the $C\cup P$-bridge containing $Q_1$ is stable by~\refclaim{cl:stable}, it follows that $Q_1$ has an internal vertex, and
 there exists a path $R$ from an internal vertex of $Q_1$ or $Q_2$ to $V(C_1)\setminus V(P)$ 
and otherwise disjoint from $P \cup Q_1 \cup Q_2 \cup C_1$.  
We deduce that $R \cup Q_1 \cup Q_2$ contains a $C_1$-cross with at least one end not in $V(P)$, as desired.
This proves our claim that we may assume that  $\{s_1, s_2, t_1, t_2\} \nsubseteq V(P)$.

It now follows that $Q_1$ and $Q_2$ have a total of exactly three ends in $V(P)$.  Without loss of generality, assume that $s_1, s_2, t_1$ are contained in $V(P)$ and occur in that order when traversing $P$.  
Since the $C\cup P$-bridge containing $Q_1$ is stable by~\refclaim{cl:stable}, it follows that $Q_1$ has an internal vertex, and
 there exists a path $R$ from an internal vertex of $Q_1$  to $V(C_1)\setminus V(P)$ 
that is otherwise disjoint from $P \cup Q_1 \cup C_1$.
If $R$ is disjoint from $Q_2$, then $Q_1\cup Q_2\cup R$ includes a $C_1$-cross with exactly two ends in $P$, a case already handled.
Thus we may assume that $R$ has a subpath $S$ with one end in $Q_1-\{s_1,t_1\}$, the other end in $Q_2-s_2$, and otherwise disjoint from $Q_1\cup Q_2$.
Now  $\yy{S} \cup Q_1 \cup Q_2 \cup P$ is a tripod in $G$, contradicting~\refclaim{cl:tripod}.  This final contradiction completes the proof of the theorem.
%\end{proof}
\qed

The proof of Theorem~\ref{thm:crossreduct} is constructive and readily implies the existence of a polynomial time algorithm 
for the problem of Theorem~\ref{alg:crossreduct}.
However, it does not seem to achieve as good a bound on the running time as the algorithm of \zz{Kawarabayashi, Li and Reed~\cite{KawLiRee}}.
%Shiloach's algorithm~\cite{Shi}.

%which given in input a graph $G$ and cycle $C$, either finds a $C$-cross or a $C$-reduction.
%%%%%%%%%%%%%%%%%%%%%%%%%%%%%%%%%%%%%%%%%%%%
%%%%%%%%%%%%%%%%%%%%%%%%%%%%%%%%%%%%%%%%%%%%

\baselineskip 11pt
\vfill
\noindent
This material is based upon work supported by the National Science Foundation.
Any opinions, findings, and conclusions or
recommendations expressed in this material are those of the authors and do
not necessarily reflect the views of the National Science Foundation.

\end{document}